\theoremstyle{plain}
\newtheorem{theorem}{Theorem}[section]
\newtheorem{lemma}[theorem]{Lemma}
\newtheorem{corollary}[theorem]{Corollary}
\theoremstyle{definition}
\theoremstyle{remark}
\newtheorem{remark}[theorem]{Remark}
\newenvironment{customassumption}[1]
{\innercustomassumption}
{\endinnercustomassumption}
\newcommand{\xmark}{\text{\ding{55}}}
\def\bZ{\mathbf{Z}}
\def\D{\mathbf{D}}
\def\bQ{\mathbf{Q}}
\def\R{\mathbb{R}}
\def\E{\mathbb{E}}
\def\bF{\mathbf{F}}
\def\bE{\mathbf{E}}
\def\bP{\mathbf{P}}
\def\bR{\mathbf{R}}
\def\H{\mathbf{H}}
\def\I{\mathbf{I}}
\def\eps{\varepsilon}
\def\u{\mathbf{u}}
\def\x{\mathbf{x}}
\def\W{\mathbf{W}}
\def\A{\mathbf{A}}
\def\B{\mathbf{B}}
\def\tr{\textup{tr}}
\def\param{\boldsymbol{\theta}}
\def\Param{\boldsymbol{\Theta}}
\def\balpha{\boldsymbol{\alpha}}
\def\bbeta{\boldsymbol{\beta}}
\def\I{\mathbf{I}}
\def\r{\mathbf{r}}
\def\c{\mathbf{c}}
\DeclareMathOperator*{\argmin}{arg\,min}
\newcommand{\innp}[1]{\left\langle #1 \right\rangle}
\newcommand{\commHL}[1]{{\textcolor{blue}{#1}}}
\author{Danny Duan}
\address{Department of Mathematics, University of Wisconsin-Madison, Madison, Wisconsin, USA}
\email{bduan5@wisc.edu}
\author{Hanbaek Lyu}
\address{Department of Mathematics, University of Wisconsin-Madison, Madison, Wisconsin, USA}
\email{hlyu@math.wisc.edu}
\date{\today}
\title[Regularized Overestimated Newton]{Regularized Overestimated Newton}
\begin{document}

\begin{abstract}
We propose Regularized Overestimated Newton (RON), a Newton-type method with low per-iteration cost and strong global and local convergence guarantees for smooth convex optimization. RON interpolates between gradient descent and globally regularized Newton, with behavior determined by the largest Hessian overestimation error. Globally, when the optimality gap of the objective is large, RON achieves an accelerated  $O(n^{-2})$ convergence rate; when small, its rate becomes $O(n^{-1})$. Locally, RON converges superlinearly and linearly when the overestimation is exact and inexact, respectively, toward possibly non-isolated minima under the local Quadratic Growth (QG) condition. The linear rate is governed by an improved effective condition number depending on the overestimation error. Leveraging a recent randomized rank-$k$ Hessian approximation algorithm, we obtain a practical variant with  $O(\textup{dim}\cdot k^2)$ cost per iteration.  When the Hessian rank is uniformly below $k$, RON achieves a per-iteration cost comparable to that of first-order methods while retaining the superior convergence rates even in degenerate local landscapes. We validate our theoretical findings through experiments on entropic optimal transport and inverse problems.
\end{abstract}

\maketitle

\section{Introduction}
We study the following problem, where $f$ is convex and twice continuously differentiable:
\[
\param_{*} \in \argmin_{\param \in \R^d} f(\param).
\]
Newton’s method, with its rapid convergence and ability to adapt to local geometry, has remained a cornerstone in convex optimization for centuries \cite{conn2000trust} and has inspired the development of several influential methods, such as trust-region \cite{conn2000trust,grapiglia2015convergence}, 
 Levenberg-Marquardt (LM) regularized Newton   \cite{levenberg1944method,marquardt1963algorithm}, and cubic Newton \cite{nesterov2006cubic,nesterov2008accelerating}. Many important problems in machine learning and scientific computing (e.g., the Kantorovich dual of the entropic optimal transport \cite{cuturi2013sinkhorn} and overdetermined least squares \cite{gentle2012numerical}) have non-isolated local minima that form a submanifold. Both LM- and cubic-regularized Newton exhibit superlinear local convergence, under suitable assumptions, in possibly degenerate local landscapes with non-isolated minima \cite{yamashita2001rate, fan2005quadratic, li2004regularized, fischer2024levenberg, yue2019quadratic, rebjock2024fast}. Moreover, cubic Newton has an accelerated global convergence rate of $O(n^{-2})$, where $n$ denotes the iteration count. Recently, Mishchenko \cite{mishchenko2023regularized} proposed globally regularized Newton (GRN), an adaptive version of LM-Newton designed to mimic key properties of cubic Newton. It maintains a fast global convergence rate $O(n^{-2})$ and has superlinear local convergence in non-degenerate landscapes. But whether it retains such superior local convergence for non-isolated minima remains open. 

Despite this progress on Newton-type methods, they are not suitable for modern high-dimensional problems due to the prohibitively high computational cost of computing and inverting the Hessian, which in general incurs at least $O(d^{2})$ cost. To reduce this high computational cost, researchers have developed lightweight Newton-type methods that use some sort of Hessian approximation. 
Quasi-Newton methods \cite{broyden1970convergence,fletcher1970new,goldfarb1970family,shanno1970conditioning,liu1989limited} use cheap recursive Hessian approximations. Modern approaches based on random subspaces and Hessian sketching techniques have also emerged \cite{pilanci2017newton,gower2019rsn,jiang2024krylov,hanzely2020stochastic,hanzely2023sketch}.
However, despite some recent theoretical developments of quasi-Newton methods \cite{jin2024non,rodomanov2021rates}, a rate of convergence for general convex function seems to be unknown; and the sketching-based methods, unless operated in the trivial full-dimensional setting, behave similarly to first-order methods with $O(n^{-1})$ global convergence and linear local convergence only in non-degenerate landscapes. 

In this work, we task ourselves with developing a simple Newton-type method that operates efficiently in high dimensions but still maintains the superior convergence guarantees of Newton-type methods. Our method is based on the following simple and flexible framework for possibly randomized Newton-type methods that we call \textit{Regularized Overestimated Newton} (RON):
\begin{align}\label{eq:RON_high_level}
& \param_{n+1}\xleftarrow[]{}\param_{n} - ( \hat{\B}_{n} + \lambda_{n} \I )^{-1}  \nabla f(\param_{n}),\\
 &\text{where} \quad  \hat{\B}_{n}  \succeq \nabla^{2}f(\param_{n})\text{ and } \lambda_{n}=\sqrt{L_H\| \nabla f(\param_{n})\|}\nonumber.
\end{align}
Here $\hat{\B}_{n}$ is a (possibly randomized) overestimation of the true Hessian $\nabla^{2} f(\param_{n})$, meaning that $\hat{\B}_{n}-\nabla^{2} f(\param_{n})$ is positive semi-definite (PSD) almost surely, and $L_H$ is the Lipschitz constant of the Hessian of $f$. One may immediately notice that this is obtained from Mishchenko's GRN \cite{mishchenko2023regularized} by replacing the exact Hessian $\nabla^{2} f(\param_{n})$ with its overestimation $\hat{\B}_{n}$. In one extreme, taking $\hat{\B}_{n}= L \I $ where $L$ denotes the largest eigenvalue of the Hessian (assuming that it is finite), \eqref{eq:RON_high_level} becomes gradient descent with stepsize $(L+\lambda_{n})^{-1}$. One may use more geometry-aware overestimation (e.g., based on low-rank Hessian approximation) to strike a balance between the first-order method and GRN. 

\subsection{Contribution} 

We first provide global and local convergence analysis of RON \eqref{eq:RON_high_level} under minimal assumptions. A common theme is a phase transition from the Newton-type fast convergence into first-order-type convergence, where the phase transition point is determined by the competition between the optimality gap and the Hessian overestimation error. We also provide a practical implementation of RON using a randomized rank-$k$ Hessian approximation that has $O(dk^{2})$ per-iteration cost with numerical validation of our results. 

\begin{description}[style=unboxed,leftmargin=0.4cm]
    \item[$\bullet$] (\textit{Global convergence}) The expected function value gap decays at a rate $O(n^{-2})$ when itself is larger than the square of the mean Hessian overestimation error; afterwards, the rate becomes $O(n^{-1})$. (Thm. \ref{thm:convex})

    \item[$\bullet$] (\textit{Local convergence}) Assume the local landscape has curvature $\mu>0$ along non-degenerate directions (see $\mu$-{QG}\eqref{eq:local_landscape_assumptions}). When the gradient norm is sufficiently small depending on $\mu$ and the the worst-case Hessian overestimation error, we have superlinear local convergence under zero overestimation error (Thm. \ref{thm:local_unified}, Cor. \ref{cor:exact}) and linear under nonzero overestimation error (Thm. \ref{thm:local_unified}, Cor. \ref{cor:inexact}).

    \item[$\bullet$] (\textit{Practical implementation}) We provide a practical implementation of RON using a randomized rank-$k$ approximation of the Hessian, based on the randomly pivoted Cholesky \cite{chen2022randomly}. The resulting method operates with $O(dk^{2})$ per-iteration cost. It coincides with GRN \cite{mishchenko2023regularized} if $k$ is a uniform bound on the rank of the Hessian. In this case, our method has cheap $O(dk^{2})$ per-iteration cost, $O(n^{-2})$ global convergence, and superlinear local convergence under the relaxed $\mu$-{QG} condition. 

    \item[$\bullet$] (\textit{Numerical validation}) 
     We demonstrate superior numerical performance of RON for solving entropic optimal transport against the Sinkhorn algorithm, various Newton-type algorithms, and for solving large linear systems against state-of-the-art methods such as Randomized Kaczmarz \cite{strohmer2009randomized} and conjugate gradient \cite{hestenes1952methods}. 
\end{description}

\subsection{Related works}
${}$
\noindent \textbf{Globally regularized Newton.} Our work has been inspired by GRN proposed by Mishchenko \cite{mishchenko2023regularized} in 2023. This method is recovered from \eqref{eq:RON_high_level} by using the full Hessian $\hat{\B}_{n}=\nabla^{2}f(\param_{n})$. It was shown that GRN achieves $O(n^{-2})$ convergence rate, matching that of the Cubic Newton without solving a cubic subproblem, for minimizing a convex objective with Lipschitz continuous Hessian. GRN also enjoys local superlinear convergence when the objective is strongly convex. { More recently in \cite{doikov2024super}, the authors proposed a super-universal regularized newton that regularizes the Hessian by $\sigma_n\|\nabla f(\param_n)\|^{\alpha}\I$ with fixed $\alpha\in[2/3,1]$ and with $\lambda_n$ chosen adaptively in each iteration to satisfy the condition $\langle\nabla f(\param_{n+1}),\param_{n+1}-\param_{n}\rangle\ge \|\nabla f(\param_{n+1})\|^2/(4\sigma_n\|\nabla f(\param_n)\|^{\alpha}).$ They showed that this algorithm can self auto-adapt to a class of objectives whose smoothness is categorized by $q\in[2,4]$ and convexity categorized by $s\in[2,\infty].$ For general convex objective ($s=\infty$), the algorithm achieves global rate $O(n^{-(1-q)})$, which specializes to $O(n^{-2})$ when q=3, which corresponds to Lipschitz continuous Hessian. And for objectives with strongly convex and Lipschitz continuous Hessian, that is s=2, q=3, (or in general $2\le s<q$), the algorithm converges superlinearly.}

\noindent\textbf{Local convergence toward non-isolated minima.}
Many problems have non-isolated minima forming a submanifold, e.g., Kantorovich dual of entropic OT \cite{cuturi2013sinkhorn}, overdetermined least squares \cite{gentle2012numerical}, phase retrieval \cite{zhou2016geometrical}, blind deconvolution \cite{li2019rapid}, and linear residual networks \cite{hardt2016identity}. To address this, prior work extends Newton-type superlinear local convergence—usually under non-degeneracy—to possibly degenerate, non-isolated minima using LM regularization \cite{yamashita2001rate, fan2005quadratic, li2004regularized, fischer2024levenberg} or cubic regularization \cite{nesterov2006cubic,yue2019quadratic, rebjock2024fast}{, under local conditions like Quadratic Growth (QG) \cite{bonnans1995second}, Error Bound (EB) \cite{luo1993error}, or Polyak–Lojasiewicz (PL) \cite{polyak1963gradient}. (See Remark \ref{rmk:PL} for a detailed discussion.)} However, such guarantees remain unknown for GRN \cite{mishchenko2023regularized} and subspace-based methods \cite{pilanci2017newton, gower2019rsn, jiang2024krylov, hanzely2020stochastic, hanzely2023sketch}.

\noindent \textbf{Sketching-based Newton methods.}
Inspired by randomized linear algebra \cite{gower2017randomized}, many recent works proposed cheap Newton-type methods using random sketching.
Subspace Newton \cite{pilanci2017newton} achieves linear global convergence for self-concordant functions and linear local convergence for smooth and $\mu$-strongly convex functions. Their local rate is not applicable unless the objective is $\mu$-strongly convex—unlike our Cor. \ref{cor:inexact}, which only requires local PL.
Random Subspace Newton (RSN) \cite{gower2019rsn} guarantees linear local and $O(n^{-1})$ global convergence for strongly convex problems. Sketch-based cubic Newton variants such as Stochastic Subspace Cubic Newton (SSCN) \cite{hanzely2020stochastic}, Sketchy Global Newton (SGN) \cite{hanzely2023sketch}, and Krylov Subspace Cubic Regularized Newton (KCRN) \cite{jiang2024krylov} provide linear local and superlinear rates only when applied to the full space. In fact, sketch-and-project-based methods cannot attain superlinear convergence in subspace settings \cite{jiang2024krylov, hanzely2020stochastic, hanzely2023sketch}.

\noindent\textbf{Low-rank Hessian approximation.}
Doikov et al. \cite{doikov2024spectral} proposed a spectrally preconditioned GD similar to our RON \eqref{eq:RON_high_level}, using overestimated Hessians of the form $\hat{\B}_{n}=\llbracket \nabla^{2} f(\param_{n}) \rrbracket_{k}+\tau_{n}\I$ with exact rank-$k$ approximations. Their convex/local guarantees are weaker than ours but hold under stronger assumptions of strong convexity with quasi-self-concordance. Moreover, exact rank-$k$ decompositions are costly, and although power-method-based approximations are discussed, no theoretical guarantees are provided for the approximate case. They also provide nonconvex convergence analysis, but we do not pursue this direction in this work. 
    \begin{table*}[h!]
         \caption{Comparison between the proposed RON method and various benchmark methods.
        }
        \label{table:comparison}
        \vskip 0.15in
        \centering
        \scriptsize
        \begin{tabular}{|c|c|c|c|c|}
        \hline
        \multirow{2}{*} {\textit{Methods}} & \multicolumn{2}{c|}{\textit{Local convergence}} & \multirow{2}{*}{\textit{per-iter cost} } & \multirow{2}{*}{\textit{Convex global rate}} \\
        \cline{2-3}
        &  \textit{Property} & \textit{Rate}  &  &\\
        \hline
        GD \cite{nesterov2004introductory} & $\mu$-PL  \eqref{eq:local_landscape_assumptions} & Linear & $O(d)$ & $O(n^{-1})$\\
        \hline
         NAGD \cite{nesterov2004introductory} & $\mu$-strongly convex & Linear & $O(d)$ & $O(n^{-2})$\\
        \hline
        Newton \cite{kantorovich1948newton} &  $\mu$-strongly convex & quadratic & $O(d^3)$ & \xmark\\
        \hline
        BFGS \cite{rodomanov2021rates,rodomanov2021new,jin2024non}  &  $\mu$-stronly convex & superlinear & $O(d^2)$ & NA\\
        \hline
        L-BFGS \cite{liu1989limited,nocedal1980updating} & $\mu$-stronly convex & linear & $O(dm)$ & NA\\
        \hline
        RSN \cite{gower2019rsn} & $\begin{matrix}
            \text{relative}\\
            \mu\text{-strongly convex}
        \end{matrix}$  & Linear & $O(k^2d)$ & $O(n^{-1})$\\
        \hline
        SGN \cite{hanzely2023sketch} & 
        $\mathbf{g}_n\in\text{Range}(\H_n)$  & Linear & $O(k^2d)$ & $O(n^{-2})$\\
        \hline
         \text{CRN} \cite{nesterov2006cubic}
       & $\mu$-PL \eqref{eq:local_landscape_assumptions} &  quadratic & $O(d^3)$ &$O(n^{-2})$ \\
        \hline
        \text{GRN} \cite{mishchenko2023regularized}
       & $\mu$-strongly convex &  superlinear & $O(d^3)$ &$O(n^{-2})$ \\
        \hline
        \text{SSCN} \cite{hanzely2020stochastic}
         & $\mu$-strongly convex &  linear & $O(k^2d)$ &$O\left(\frac{d-k}{k}\frac{1}{n}+\frac{d^2}{k^2}\frac{1}{n^2}\right)$ \\
        \hline
        \text{KCRN} \cite{jiang2024krylov}
         & $\mu$-strongly convex  &  linear & $O(kd)$ &$O\left(\frac{1}{kn}+\frac{1}{n^2}\right)$ \\
        \hline 
        \textbf{RON} (ours)
         & $\mu$-QG\eqref{eq:local_landscape_assumptions} & $\begin{matrix}
            \text{linear}\\
            \text{superlinear if $\overline{\eps}=0$}
        \end{matrix}$ & $O(k^2d)$ &$O\left(\frac{1}{n^2}\right)\to O\left(\frac{1}{n}\right)$ \\
        \hline
        \end{tabular}
    \end{table*}
    
\subsection{Notation}
We use $\lVert \cdot \rVert_{F}$ and $\|\cdot\|_{2}$ to denote the matrix Frobenius norm and matrix 2-norm, respectively. For vectors, we use $\|\cdot\|$ to denote the Euclidean norm. 
A square matrix $\A\in \R^{d\times d}$ is \textit{positive semi-definite} (PSD) if $\x^{T} A \x\ge 0$ for all $\x\in \R^{d}$ and is \textit{positive definite} (PD) if $\x^{T} A \x>0$ for all nonzero $\x\in \R^{d}$. For a real symmetric matrix $\A$, let $\lambda_{\min}(\A)$ and $\lambda_{\max}(\A)$ denote the minimum and the maximum eigenvalues of $\A$. A real symmetric matrix $\A$ is PSD if and only if $\lambda_{\min}(\A)\ge 0$, and PD if and only if $\lambda_{\min}(\A)> 0$. The \textit{trace norm} of $\A$ is $\tr(\A)$, which equals the sum of all eigenvalues of $\A$. For two square matrices $\A,\B\in \R^{d\times d}$, denote $\A\succeq \B$ if $\A-\B$ is PSD.

\subsection{Organization} 
The paper is organized as follows. An overview of our algorithm description is in \cref{sec:alg_overview}, the main results can be found in \cref{sec:main}. The proofs of global and local convergence results are established in \cref{sec:global_pf} and \cref{sec:local_pf}, respectively. We offer a detailed implementation of RON in \cref{sec:alg_detail} and some numerical experiments in \cref{sec:experiments}.

\section{RON with low-rank Hessian estimation}\label{sec:alg_overview}

We propose to use RON \eqref{eq:RON_high_level} with the Hessian overestimation based on a low-rank approximation. For a fixed rank parameter $k$, consider RON based on rank-$k$ approximation of the Hessian that we call the \textit{rank-$k$ RON}:  
\begin{align}\label{eq:RPC_hessian_approximation}
 & \textup{\eqref{eq:RON_high_level} with }  \hat{\B}_{n}  \leftarrow \hat{\H}_{n} + \rho_{n} \I, 
\\
&\hspace{1cm} \textup{where $\hat{\H}_{n}$}  \approx \llbracket \nabla^{2} f(\param_{n})  \rrbracket_{k}, 
\quad \rho_{n} \ge \| \nabla^{2}f(\param_{n}) - \hat{\H}_{n} \|_{2}. \nonumber 
\end{align}
Here $\hat{\H}_{k}$ has rank $k$ and  $\llbracket \A \rrbracket_k$ denotes the best  rank-$k$ approximation of $\A$ (i.e., $\llbracket \A \rrbracket_k=\sum_{i=1}^{k} \sigma_{i}\u_{i} \u_{i}^{\top}$ where $\sigma_{1}\ge\sigma_{2}\ge\cdots$ are eigenvalues of $\A$ and $\u_{1},\u_{2},\cdots$ are the corresponding unit eigenvectors). The above provides a flexible framework where a user can choose her favorite low-rank PSD matrix approximation method from the vast literature (see Musco and Woodruff \cite{musco2017sublinear} and references therein) to compute $\hat{\H}_{n}$. Examples include randomized $k$-SVD \cite{halko2011finding, tzeng2022improved}, power method \cite{golub2013matrix}, and Lanczos method \cite{lanczos1950iteration}.

We propose a particular variant of \eqref{eq:RPC_hessian_approximation} that computes $\hat{\H}_{n}$ using \textit{randomly pivoted Cholesky} (RPC) by Chen, Epperly, Tropp, and Webber \cite{chen2022randomly}, which has many pleasing properties in our context of RON. Given a PSD matrix $\A$ and a rank parameter $k$, RPC outputs a $k$-column Nystr\"{o}m approximation 
\cite[Sec. 19.2]{martinsson2020randomized} $\hat{\A}$ in the factor form $\bF\bF^{\top}$, $\bF\in \R^{d\times k}$. This is a randomized algorithm since the $k$ columns of $\A$ are sampled with probabilities proportional to the corresponding diagonal entries of $\A$. It is always an \textit{underestimation} $\mathbf{O}\preceq \hat{\A}\preceq \A$, so $\| \A-\hat{\A} \|_{2}\le \tr(\A-\hat{\A})$. The trace norm bound can be easily computed. Thus we can specialize the rank-$k$ RON as 
\begin{align}\label{eq:RON_RPC}
&\textup{\eqref{eq:RPC_hessian_approximation} with   }\hat{\H}_{n}\leftarrow \bF_{n}\bF_{n}^{\top} \approx \llbracket \nabla^{2} f(\param_{n})  \rrbracket_{k} \,\,  \textup{using RPC \cite{chen2022randomly}},\\
&\hspace{1cm}  \rho_{n}\leftarrow \tr\left( \nabla^{2} f(\param_{n}) - \hat{\H}_{n} \right). \nonumber
\end{align}
The above procedure has a cheap per-iteration cost of $O(d k^{2})$; this is the cost of running RPC with $k$ columns, and the inversion \eqref{eq:RON_high_level} also takes the same amount of computation using Woodbury inversion \cite{woodbury1950inverting}. When $k=O(1)$, this method has computational complexity comparable to the first-order methods.

Another important advantage of the implementation \eqref{eq:RON_RPC} is that the overestimation error becomes zero if $k\ge \textup{rank}(\nabla^{2} f(\param_{n}))$. In this case, we have exact recovery of the Hessian $\hat{\H}_{n}=\nabla^{2} f(\param_{n})$ (so $\rho_{n}=0$) almost surely (see Cor. \ref{cor:RPC_exact}).  Therefore, when $k$ is a uniform upper bound on the rank of the Hessian, our method \eqref{eq:RON_RPC} coincides with GRN \cite{mishchenko2023regularized} with a cheap per-iteration cost $O(dk^{2})$.

\section{Main results}
\label{sec:main}
We provide global and local convergence analysis of our RON algorithm \ref{algorithm:RON}. For both analyses, we assume that the objective has Lipschitz continuous gradient and Hessian as below:

\begin{customassumption}{A1}[Smooth objective]\label{assumption:A1}
    The convex objective $f:\R^{d}\to \R$  satisfies  $f^{*}:=\inf f>-\infty$ and is twice continuously differentiable, and for some constants $L,L_H>0$, 
    \begin{align}
        \|\nabla f(\param)-\nabla f(\param')\|_2 \leq L\|\param-\param'\|, \quad 
         \|\nabla^2 f(\param)-\nabla^2 f(\param')\| \leq L_H\|\param-\param'\| \label{eq:assump2}
    \end{align}
    for all $\param,\param'\in\mathcal{L}:=\{\param:f(\param)\leq f(\param_0)\}.$ Furthermore, $\mathcal{L}$ is compact with diameter $D$. 
\end{customassumption}

Recall that for each $\param\in\mathcal{L}$, RON \eqref{eq:RON_high_level} calls for a (possibly randomized) overestimation $\hat{\B}(\param)\succcurlyeq \nabla^2f(\param)$. Define the expected and almost sure upper bound on the Hessian overestimation error within the level set $\mathcal{L}$ as below:
\begin{align}\label{eq:def_worst_case_overestimation_errors}
\overline{\eps}_{\E}:=\sup_{\param\in \mathcal{L}}\E_{\omega}[\|\hat\B(\param;\omega)-\nabla^2 f(\param)\|_{2}], \quad \overline{\eps}:=\sup_{\omega\in \Omega}\sup_{\param\in \mathcal{L}} \|\hat\B(\param; \omega)-\nabla^2 f(\param)\|_{2},
\end{align}
where $\omega\in \Omega$ denotes the randomness in the Hessian approximation oracle $\hat{\B}$. For any iterate $\param_n$ produced by the algorithm, also denote $\eps_n=\|\hat{\B}_n-\nabla^2f(\param_n)\|_2$. Notice that $\overline{\eps}\le L$ with the trivial Hessian overestimation $\hat{\B}_{n}=L\I$. Also, if we use $\hat{\B}_{n}=\llbracket \nabla^{2} f(\param_{n})\rrbracket_{k}+\sigma_{k+1}(\nabla^{2} f(\param_{n}))\I$, then $\overline{\eps}\le \sup_{\param\in \mathcal{L}} \sigma_{k+1}(\nabla^{2} f(\param))$.   

{
\begin{lemma}
    If $\hat{\B}_n=\hat{\H}_n+\rho_n\I$ as in \eqref{eq:RPC_hessian_approximation}. Then for any $\tau_n\ge \eps_n$, we have
    $\sigma_{\min}(\hat{\B}_n)+\tau_n\ge \frac{\eps_n+\tau_n}{2}$.
    If $\hat{\H}_n$ is in fact a PSD underestimation of the Hessian, namely $\mathbf{O}\preceq \H_n\preceq \nabla^2f(\param_n)$, then $\sigma_{\min}(\hat{\B}_n)\ge \eps_n.$    
\end{lemma}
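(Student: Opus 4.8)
The plan is to settle both claims by elementary spectral estimates, invoking only the convexity of $f$ (so that $\nabla^{2}f(\param_{n})\succeq\mathbf{O}$), the monotonicity of $\lambda_{\min}$ and $\lambda_{\max}$ under PSD perturbations, and the defining inequality $\rho_{n}\ge\|\nabla^{2}f(\param_{n})-\hat{\H}_{n}\|_{2}$ from \eqref{eq:RPC_hessian_approximation}. No property of the particular low-rank approximation oracle is needed for the first statement.

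For the first claim, the single observation I would make is that $\hat{\B}_{n}$ is PSD. Indeed $\rho_{n}\I\succeq\|\nabla^{2}f(\param_{n})-\hat{\H}_{n}\|_{2}\,\I\succeq\nabla^{2}f(\param_{n})-\hat{\H}_{n}$, hence $\hat{\B}_{n}=\hat{\H}_{n}+\rho_{n}\I\succeq\nabla^{2}f(\param_{n})\succeq\mathbf{O}$ by convexity. Therefore $\sigma_{\min}(\hat{\B}_{n})=\lambda_{\min}(\hat{\B}_{n})\ge 0$, and for any $\tau_{n}\ge\eps_{n}$ we obtain the chain $\sigma_{\min}(\hat{\B}_{n})+\tau_{n}\ge\tau_{n}\ge\frac{\eps_{n}+\tau_{n}}{2}$, the last inequality being a restatement of $\tau_{n}\ge\eps_{n}$. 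That is the entire argument.

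For the second claim, I would use the extra hypothesis $\mathbf{O}\preceq\hat{\H}_{n}\preceq\nabla^{2}f(\param_{n})$ twice. First, $\hat{\H}_{n}\succeq\mathbf{O}$ gives $\sigma_{\min}(\hat{\B}_{n})=\lambda_{\min}(\hat{\H}_{n})+\rho_{n}\ge\rho_{n}$. Second, writing $M_{n}:=\nabla^{2}f(\param_{n})-\hat{\H}_{n}$, the hypothesis gives $M_{n}\succeq\mathbf{O}$ while $\rho_{n}\ge\|M_{n}\|_{2}$ gives $M_{n}\preceq\rho_{n}\I$; hence $\mathbf{O}\preceq\hat{\B}_{n}-\nabla^{2}f(\param_{n})=\rho_{n}\I-M_{n}\preceq\rho_{n}\I$, so $\eps_{n}=\|\hat{\B}_{n}-\nabla^{2}f(\param_{n})\|_{2}=\|\rho_{n}\I-M_{n}\|_{2}\le\rho_{n}$. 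Combining the two bounds yields $\sigma_{\min}(\hat{\B}_{n})\ge\rho_{n}\ge\eps_{n}$.

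There is no real obstacle here; both parts are two-line computations. The only points I would be careful to flag are that in the general framework \eqref{eq:RPC_hessian_approximation} the approximant $\hat{\H}_{n}$ need not be PSD nor an underestimation, so the first part must rest solely on $\rho_{n}\ge\|\nabla^{2}f(\param_{n})-\hat{\H}_{n}\|_{2}$ and convexity; and that it is precisely the additional structure of the second part — which holds for the RPC/Nystr\"om instantiation \eqref{eq:RON_RPC} — that upgrades the crude bound $\sigma_{\min}(\hat{\B}_{n})\ge 0$ to the quantitatively useful $\sigma_{\min}(\hat{\B}_{n})\ge\eps_{n}$.
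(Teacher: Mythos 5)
Your proof is correct and follows essentially the same route as the paper: the first part reduces to $\sigma_{\min}(\hat{\B}_n)\ge 0$ together with $\tau_n\ge\eps_n$ (the paper rearranges to $\sigma_{\min}(\hat{\B}_n)\ge(\eps_n-\tau_n)/2\le 0$, you write $\sigma_{\min}(\hat{\B}_n)+\tau_n\ge\tau_n\ge\frac{\eps_n+\tau_n}{2}$, which is the same observation), and the second part is the same chain $\eps_n\le\rho_n\le\sigma_{\min}(\hat{\B}_n)$ obtained from $\mathbf{O}\preceq\hat{\H}_n\preceq\nabla^2 f(\param_n)$. Your extra remark justifying $\sigma_{\min}(\hat{\B}_n)\ge 0$ via $\hat{\B}_n\succeq\nabla^2 f(\param_n)\succeq\mathbf{O}$ is a harmless refinement of what the paper simply asserts.
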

\begin{proof}
    The first inequality is equivalent to $\sigma_{\min}(\hat{\B}_n)\ge (\eps_n-\tau_n)/2$, which is true since $\tau_n\geq\eps_n$ by assumption and $\sigma_{\min}(\hat{\B}_n)\geq 0.$ For the second claim, if $\mathbf{O}\preceq \H_n\preceq \nabla^2f(\param_n)\preceq \hat{\B}_n$, then
    \[
    \eps_n=\lambda_{\max}(\hat{B}_n-\nabla^2f(\param_n))=\lambda_{\max}(\hat{\H}_n-\nabla^2f(\param_n)+\rho_n\I)\le \rho_n\le \sigma_{\min}(\hat{\B}_n).
    \]
\end{proof}
This shows that for a reasonable formation of the Hessian overestimation $\hat{\B}_n$, we may further assume that almost surely $\sigma_{\min}(\hat{\B}_n)\ge c\eps_n$ for some $c>0$ independent of $n.$ Note that this requirement holds for $c=1$ if $\hat{\B}_n$ can be rewritten as $\hat{\H}_n+\rho_n\mathbf{I}$ where $\hat{\H}_n$ is a PSD underestimation of the Hessian. In general, we can always make sure this is true with $c=1/2$ by updating the initial overestimation by adding $\tau_n\mathbf{I}$ with $\tau_n\geq \eps_n$. An easily computable option could be  $\tau_{n}=\tr(\hat{\B}_{n}-\nabla^{2}f(\param_{n}))$.
}

The following result establishes the two-phase global convergence rate of RON. 

\begin{theorem}[Global convergence for convex objective]\label{thm:convex}
	Assume $f$ satisfies \cref{assumption:A1}. Suppose $(\param_n)_{n\geq 0}$ is generated by RON \eqref{eq:RON_high_level} with arbitrary $\param_0$. 
    Assume $\sigma_{\min}(\hat{\B}_{n})\ge c\eps_{n}$ for some $c>0$ and for all $n\ge 1$. Denote $A_{n}:=f(\param_{n})-f^{*}$ and $T:=\inf \{ n\ge 0 \,|\,\E[A_n] \leq  (D\overline{\eps}_{\E})^{2}/L_H  \}$. Then 
	 $\E[A_{n}] \le C_1n^{-2}$ for $0\le n < T$ and $\E[A_{n}]\le C_2 \overline{\eps}_{\E}n^{-1}$  for $n\ge T$ where $C_1=O(D^3L_H), C_2=O(\max\{D^3L_H,D^2\})$.
\end{theorem}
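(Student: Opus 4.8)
The plan is to follow Mishchenko's two-step GRN argument, tracking the extra error terms coming from the overestimation $\hat{\B}_n \succeq \nabla^2 f(\param_n)$. The starting point is the standard cubic-type descent inequality obtained from $L_H$-Lipschitz continuity of the Hessian: for any step $\param_{n+1} = \param_n - (\hat{\B}_n + \lambda_n\I)^{-1}\nabla f(\param_n)$,
\[
f(\param_{n+1}) \le f(\param_n) + \langle \nabla f(\param_n), \param_{n+1}-\param_n\rangle + \tfrac12\langle \nabla^2 f(\param_n)(\param_{n+1}-\param_n),\param_{n+1}-\param_n\rangle + \tfrac{L_H}{6}\|\param_{n+1}-\param_n\|^3.
\]
Since $\hat{\B}_n \succeq \nabla^2 f(\param_n)$, I can replace $\nabla^2 f(\param_n)$ by $\hat{\B}_n$ in the quadratic term at no cost, and then the first three terms become a clean expression in $\nabla f(\param_n)$ and $(\hat{\B}_n+\lambda_n\I)^{-1}$; the choice $\lambda_n = \sqrt{L_H\|\nabla f(\param_n)\|}$ is designed exactly so that the cubic term is absorbed, yielding something like $f(\param_{n+1}) \le f(\param_n) - \tfrac12\langle \nabla f(\param_n),(\hat{\B}_n+\tfrac32\lambda_n\I)^{-1}\nabla f(\param_n)\rangle$ (up to constants). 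Bounding $\hat{\B}_n \preceq \nabla^2 f(\param_n) + \eps_n\I \preceq (L+\eps_n)\I \preceq 2L\I$ gives a guaranteed per-step decrease of order $\|\nabla f(\param_n)\|^2/(L + \lambda_n)$, hence $(A_n)$ is nonincreasing and all iterates stay in $\mathcal{L}$; this is what legitimizes using the Lipschitz constants throughout.

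The second step is the key recursion on the gaps $A_n = f(\param_n)-f^*$. Using convexity, $A_n \le \langle \nabla f(\param_n), \param_n - \param_*\rangle \le D\|\nabla f(\param_n)\|$, so $\|\nabla f(\param_n)\| \ge A_n/D$. Feeding this into the per-step decrease and distinguishing whether the regularization $\lambda_n = \sqrt{L_H\|\nabla f(\param_n)\|}$ or the Hessian bound $L+\eps_n$ dominates $(\hat{\B}_n + \tfrac32\lambda_n\I)$, I get two regimes: when $\lambda_n \gtrsim L + \eps_n$ (equivalently $\|\nabla f(\param_n)\|$ not too small relative to the overestimation error and $L$), the decrease is $\gtrsim \|\nabla f(\param_n)\|^{3/2}/\sqrt{L_H} \gtrsim (A_n/D)^{3/2}/\sqrt{L_H}$, giving the recursion $A_{n+1} \le A_n - c\,A_n^{3/2}/(D^{3/2}\sqrt{L_H})$, which is the standard ODE-comparison mechanism producing $A_n = O(D^3 L_H/n^2)$; in the other regime the decrease is $\gtrsim \|\nabla f(\param_n)\|^2/(L+\eps_n) \gtrsim A_n^2/(D^2(L+\eps_n))$, giving $A_n = O(D^2(L+\eps_n)/n)$. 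The delicate point — and where the overestimation genuinely enters — is identifying the transition: the $O(n^{-2})$ regime persists as long as $\lambda_n \gtrsim \eps_n$, i.e. $L_H\|\nabla f(\param_n)\| \gtrsim \eps_n^2$; combined with $\|\nabla f(\param_n)\| \ge A_n/D$ and the definition of $\overline{\eps}_\E$, this is why the threshold is $\E[A_n] \le (D\overline{\eps}_\E)^2/L_H$. Here the hypothesis $\sigma_{\min}(\hat{\B}_n) \ge c\eps_n$ is used: it ensures $\hat{\B}_n + \tfrac32\lambda_n\I \preceq (1 + \tfrac{3}{2c})(\sigma_{\min}(\hat{\B}_n) + \lambda_n)\I$ is comparable to $\eps_n + \lambda_n$ from below as well, so the two-sided control is clean.

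The expectation is handled by taking $\E$ of the recursions, using that the maps $x\mapsto x^{3/2}$ and $x\mapsto x^2$ are convex so Jensen gives $\E[A_n^{3/2}] \ge (\E[A_n])^{3/2}$ in the wrong direction — so instead I should condition on $\F_n$, derive the pathwise recursion $A_{n+1} \le A_n - c\min\{A_n^{3/2}/(D^{3/2}\sqrt{L_H}),\, A_n^2/(D^2(L+\eps_n))\}$ with $\eps_n \le \overline{\eps}$ a.s., take conditional expectation to get $\E[A_{n+1}\mid \F_n]$ in terms of $A_n$, and only then push through the deterministic ODE-comparison lemma on the sequence $\E[A_n]$ — which works because the comparison functions are concave in the relevant sense (the $n^{-2}$ and $n^{-1}$ upper-bound sequences satisfy the recursion with inequalities that are preserved under taking expectations). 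Actually the cleanest route, which I expect is what the authors do: before iteration $T$, one has $\E[A_n] > (D\overline{\eps}_\E)^2/L_H$, which by Jensen ($\|\nabla f\|\ge A/D$ and monotonicity) forces the "Newton phase" decrease in expectation, yielding $\E[A_{n+1}] \le \E[A_n] - c(\E[A_n])^{3/2}/(D^{3/2}\sqrt{L_H})$ after one more application of Jensen to the concave function $x\mapsto -x^{3/2}$... which again points the wrong way, so one genuinely must run the comparison at the level of conditional expectations and invoke a supermartingale-type bound. The main obstacle, then, is precisely this interchange of expectation with the nonlinear recursions: handling the $\min$ and the convex powers correctly so that $\E[A_n]$ — not $A_n$ pathwise — satisfies the two clean rate bounds with the stated transition time $T$, and verifying that once $\E[A_n]$ dips below the threshold it cannot climb back out (monotonicity of $\E[A_n]$, which follows from the pathwise monotonicity of $A_n$). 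The rest is bookkeeping of the constants $C_1 = O(D^3 L_H)$ and $C_2 = O(\max\{D^3 L_H, D^2\})$ from the two ODE comparisons, with the $D^2$ term in $C_2$ absorbing the $L$ via $L \lesssim$ a quantity controlled on the compact level set.
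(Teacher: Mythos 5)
There is a genuine gap, and it lies exactly where your sketch pivots: the per-step progress bound. Your route bounds the descent by the quadratic form $\tfrac12\langle \nabla f(\param_n),(\hat{\B}_n+\tfrac32\lambda_n\I)^{-1}\nabla f(\param_n)\rangle \gtrsim \|\nabla f(\param_n)\|^2/(\lambda_{\max}(\hat{\B}_n)+\lambda_n)$, which puts $L$ (and $\eps_n$) in the denominator. With that bound, the accelerated regime only kicks in when $\lambda_n\gtrsim L+\eps_n$, i.e.\ when $\E[A_n]\gtrsim D(L+\overline{\eps}_{\E})^2/L_H$, and the slow phase becomes $O\bigl(D^2(L+\overline{\eps}_{\E})n^{-1}\bigr)$. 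That is strictly weaker than the statement: the theorem's threshold is $(D\overline{\eps}_{\E})^2/L_H$ and the second-phase constant is proportional to $\overline{\eps}_{\E}$, so in particular when $\overline{\eps}_{\E}=0$ the theorem gives $O(n^{-2})$ for all $n$ (recovering GRN), which your argument cannot produce — it would stall at gap $\sim DL^2/L_H$ and then degrade to $O(L/n)$. (Your bound $\hat{\B}_n\preceq 2L\I$ is also unjustified, since nothing caps $\eps_n$ by $L$.) The paper avoids $L$ entirely by a different mechanism: the stability lemma gives $\|\nabla f(\param_{n+1})\|\le\tfrac32\overline{\lambda}_n r_n$ with $\overline{\lambda}_n=\lambda_n+\eps_n$, and the descent lemma gives $f(\param_n)-f(\param_{n+1})\ge\tfrac12(\lambda_n+\sigma_{\min}(\hat{\B}_n))r_n^2$. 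One then splits indices into those where $\E\|\nabla f\|$ contracts by a fixed factor (these contribute geometric decay, handled as a separate case on $|\mathcal{I}_n|$) and those where it does not, where the stability bound forces $r_n\gtrsim\|\nabla f(\param_n)\|/\overline{\lambda}_n$ and hence a decrease $\gtrsim\|\nabla f(\param_n)\|^2/(\lambda_n+\eps_n)$ — this is where the hypothesis $\sigma_{\min}(\hat{\B}_n)\ge c\eps_n$ is actually used, to compare $\lambda_n+\sigma_{\min}(\hat{\B}_n)$ with $\overline{\lambda}_n$. Your sketch has no counterpart to this index-splitting step, and without it the clean $\|\nabla f\|^2/(\lambda_n+\eps_n)$ progress bound (the whole point of the theorem's sharper threshold) is not available.

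The second issue, the expectation interchange, is where you talk yourself into unnecessary trouble. The pathwise recursion is $A_{n+1}-A_n\le -\mathrm{const}\cdot A_n^2/(\sqrt{L_H A_n}+\eps_n\sqrt{D})$; since $\eps\mapsto 1/(a+\eps)$ is convex, conditioning on $\F_n$ and using $\E[\eps_n\mid\F_n]\le\overline{\eps}_{\E}$ with Jensen replaces $\eps_n$ by $\overline{\eps}_{\E}$, and since $x\mapsto x^2/(a+b\sqrt{x})$ is convex and increasing, a second application of Jensen yields $\E[A_{n+1}]-\E[A_n]\le-\mathrm{const}\cdot\E[A_n]^2/(\sqrt{L_H\E[A_n]}+\overline{\eps}_{\E}\sqrt{D})$. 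The convexity works in your favor because these terms enter with a negative sign ($\E[-\phi(A_n)]\le-\phi(\E[A_n])$ for convex $\phi$), so no supermartingale argument is needed; from there the two ODE-comparison recursions (in $1/\sqrt{\E[A_n]}$ before $T$, in $1/\E[A_n]$ after $T$, run along the subsequence $\mathcal{I}_\infty$ and using monotonicity of $\E[A_n]$) give the stated constants.
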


Note that we have not tried to optimize the constants in Thm. \ref{thm:convex}. 

Next, we turn our attention to the local convergence of RON. We allow the possibility that the minima of $f$ may not be isolated. So, instead of having one global minimizer $\param_*$, we denote a set of global minima $\mathcal{S}:= \argmin_{\param} f(\param)$. Specifically, $\mathcal{S}$ is a singleton if $f$ is strictly convex, but in the degenerate case where the Hessian of $f$ near every global minimizer $\param_{*}\in \mathcal{S}$ has rank $r<d$, 
$\mathcal{S}$ is a $(d-r)$-dimensional submanifold by constant rank theorem \cite{john2012introduction}. The algorithm can now converge towards any arbitrary minimizer $\param_{*}\in \mathcal{S}$ instead of a singleton. This additional degree of freedom makes it highly non-trivial to establish local convergence results for the general situation with non-isolated local minima.

In order to control such a situation, we further impose the following geometric assumption of local Quadratic Growth (QG) condition:

\begin{customassumption}{A2}[Local QG condition]\label{assumption:A2}
    There exists a constant $\mu>0$ and $\eta>0$ such that $f$ is \textit{$\mu$-QG} near every minimizer $\param_{*}\in \mathcal{S}$, that is, 
\begin{align*}
   f(\param)-f(\param_{*})\ge \frac{\mu}{2}\text{dist}(\param,\mathcal{S})^2  \quad \textup{for all $\param$ with $\| \nabla f(\param) \|\le \eta$}.
\end{align*} 
\end{customassumption}
The above assumption is strictly weaker than local $\mu$-strong convexity of $f$ near the isolated minimizer. Further, $\mu$-QG is weaker than $\mu$-EB and $\mu$-PL, which are some other commonly used assumptions on non-isolated local minima (See Remark \ref{rmk:PL} for a detailed discussion). $\mu$-PL (thus $\mu$-QG) is known to be satisfied by a number of problems including phase retrieval \cite{zhou2016geometrical}, blind deconvolution \cite{li2019rapid}, and linear residual networks \cite{hardt2016identity}. Moreover, $\mu$-PL has been used in recent works on cubic Newton in degenerate landscapes 
\cite{nesterov2006cubic,rebjock2024fast}. And $\mu$-EB was used in LM-regularized Newton in degenerate landscapes \cite{yamashita2001rate, fan2005quadratic, li2004regularized, fischer2024levenberg}. 

The following is our key result for local convergence of RON in general $\mu$-QG local landscapes. It will be helpful to introduce the following random time determined by the size of the gradient norm. For each $\gamma>0$, define 
\begin{align}\label{eq:N_gamma}
N(\gamma):=\inf\left\{n\ge 0\,:\, \|\nabla f(\param_{n})\|< \min\{ (\gamma\mu)^2/L_H, \eta \} \right\},
\end{align}
where $\mu,\eta$ are as in \ref{assumption:A2}. Accordingly, we define: 
\begin{align}\label{eq:Ni}
    N_{1} &:=N(\gamma_{1}) \quad \textup{where} \quad \gamma_{1}:=\min\left\{0.2,\frac{20}{33}\frac{\mu}{1.2\mu+\overline{\eps}}\right\}. 
\end{align}
We can view the random stopping time $N_{1}$ as the time at which local convergence begins.

\begin{theorem}[Local contraction of gradient norm]\label{thm:local_unified}
Suppose $f$ satisfies \ref{assumption:A1} and \ref{assumption:A2}.  
Let $N_{1}$ be as in \eqref{eq:Ni}. 
Denote $\overline{\lambda}_{n}:=\eps_n+\sqrt{L_{H}\| \nabla f(\param_{n}) \|}$ with $\eps_{n}:=\| \hat{\B}_{n}-\nabla^{2}f(\param_{n}) \|_{2}$. 
On the event $n\ge N_{1}$, almost surely, $\|\nabla f(\param_{n+1})\|\leq \|\nabla f(\param_n)\|$ and more precisely 
\begin{align}\label{eq:grad_contraction}
     \|\nabla f(\param_{n+1})\|\leq \frac{\overline{\lambda_n}}{\mu+\overline{\lambda_n}}\|\nabla f(\param_n)\|+\frac{5}{4}\frac{\sqrt{L_H}}{\mu}\frac{\overline{\lambda_n}}{\mu+\overline{\lambda_n}}\|\nabla f(\param_n)\|^{3/2}+ \frac{2L_H}{\mu^2}\|\nabla f(\param_n)\|^2.
\end{align}
\end{theorem}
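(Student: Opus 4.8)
The plan is to analyze a single RON step on the event $\{n \ge N_1\}$ and show that the gradient norm contracts as claimed. Write $\B_n := \hat{\B}_n + \lambda_n \I$ with $\lambda_n = \sqrt{L_H \|\nabla f(\param_n)\|}$, so the update is $\param_{n+1} = \param_n - \B_n^{-1} \nabla f(\param_n)$. The starting point is the standard Hessian-Lipschitz estimate
\begin{align*}
\|\nabla f(\param_{n+1}) - \nabla f(\param_n) - \nabla^2 f(\param_n)(\param_{n+1} - \param_n)\| \le \frac{L_H}{2}\|\param_{n+1} - \param_n\|^2.
\end{align*}
Substituting $\param_{n+1} - \param_n = -\B_n^{-1}\nabla f(\param_n)$ and using $\nabla^2 f(\param_n) = \B_n - (\hat{\B}_n - \nabla^2 f(\param_n)) - \lambda_n \I$, the term $\nabla f(\param_n) + \nabla^2 f(\param_n)(\param_{n+1}-\param_n)$ collapses to $\big((\hat{\B}_n - \nabla^2 f(\param_n)) + \lambda_n \I\big)\B_n^{-1}\nabla f(\param_n)$. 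This yields
\begin{align*}
\|\nabla f(\param_{n+1})\| \le \|(\hat{\B}_n - \nabla^2 f(\param_n) + \lambda_n \I)\B_n^{-1}\nabla f(\param_n)\| + \frac{L_H}{2}\|\B_n^{-1}\nabla f(\param_n)\|^2,
\end{align*}
and since $\|\hat{\B}_n - \nabla^2 f(\param_n) + \lambda_n \I\|_2 \le \eps_n + \lambda_n = \overline{\lambda}_n$, the first term is bounded by $\overline{\lambda}_n \|\B_n^{-1}\nabla f(\param_n)\|$.

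The crux is therefore to control $\|\B_n^{-1}\nabla f(\param_n)\|$, and this is where $\mu$-QG must be brought in; it is the main obstacle because $\B_n$ need not be invertible on the degenerate subspace in any uniform way, and the overestimation $\hat{\B}_n$ only helps along non-degenerate directions. The idea is: on $\{n \ge N_1\}$ we have $\|\nabla f(\param_n)\| < \min\{(\gamma_1\mu)^2/L_H, \eta\}$, so $\param_n$ lies in the QG region and $\lambda_n < \gamma_1 \mu$. Let $\param_*$ be the projection of $\param_n$ onto $\mathcal{S}$, so $\text{dist}(\param_n, \mathcal{S}) = \|\param_n - \param_*\|$. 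Using convexity and QG one gets $\frac{\mu}{2}\|\param_n - \param_*\|^2 \le f(\param_n) - f(\param_*) \le \langle \nabla f(\param_n), \param_n - \param_*\rangle \le \|\nabla f(\param_n)\|\,\|\param_n - \param_*\|$, hence $\|\param_n - \param_*\| \le \frac{2}{\mu}\|\nabla f(\param_n)\|$; combined with $L$-smoothness this also gives $\|\nabla f(\param_n)\| = \|\nabla f(\param_n) - \nabla f(\param_*)\| \le L\|\param_n-\param_*\|$, so the gradient norm and $\text{dist}(\param_n,\mathcal{S})$ are comparable. I then want to compare $\B_n^{-1}\nabla f(\param_n)$ with $\param_n - \param_*$. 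Writing $\nabla f(\param_n) = \nabla^2 f(\param_*)(\param_n - \param_*) + O(L_H \|\param_n-\param_*\|^2)$ and $\hat{\B}_n \succeq \nabla^2 f(\param_n) \succeq \nabla^2 f(\param_*) - L_H\|\param_n - \param_*\|\I$, the matrix $\B_n$ dominates $\nabla^2 f(\param_*)$ up to a perturbation of order $\overline{\lambda}_n + L_H\|\param_n-\param_*\|$ (note $L_H \|\param_n - \param_*\| \le \frac{2L_H}{\mu}\|\nabla f(\param_n)\| \lesssim \lambda_n^2/\mu$, a higher-order quantity). On the range of $\nabla^2 f(\param_*)$, whose smallest nonzero eigenvalue is $\ge \mu$ (this is the content one extracts from QG together with the constant-rank structure, or one argues directly via the "restricted secant" inequality), one gets $\|\B_n^{-1}\nabla f(\param_n)\| \le \frac{1}{\mu + \overline{\lambda}_n}\|\nabla f(\param_n)\|$ up to correction terms of orders $\|\nabla f(\param_n)\|^{3/2}$ and $\|\nabla f(\param_n)\|^2$. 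Here the component of $\nabla f(\param_n)$ orthogonal to $\text{Range}(\nabla^2 f(\param_*))$ is itself $O(L_H\|\param_n-\param_*\|^2)$, which feeds the quadratic error term.

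Putting the two pieces together gives
\begin{align*}
\|\nabla f(\param_{n+1})\| \le \frac{\overline{\lambda}_n}{\mu + \overline{\lambda}_n}\|\nabla f(\param_n)\| + c_1 \frac{\sqrt{L_H}}{\mu}\frac{\overline{\lambda}_n}{\mu+\overline{\lambda}_n}\|\nabla f(\param_n)\|^{3/2} + c_2\frac{L_H}{\mu^2}\|\nabla f(\param_n)\|^2,
\end{align*}
and the remaining work is bookkeeping to verify the constants $\frac54$ and $2$ — tracking where factors of $\overline{\lambda}_n/(\mu+\overline{\lambda}_n)$ versus plain constants appear, and using $\lambda_n = \sqrt{L_H\|\nabla f(\param_n)\|}$ to convert between powers of $\|\nabla f(\param_n)\|$ and powers of $\lambda_n$. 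Finally, to get the monotonicity claim $\|\nabla f(\param_{n+1})\| \le \|\nabla f(\param_n)\|$, I would use the definition of $N_1$: on $\{n \ge N_1\}$, $\|\nabla f(\param_n)\|$ is small enough (the threshold $(\gamma_1\mu)^2/L_H$ with $\gamma_1 \le 20\mu/(33(1.2\mu + \overline{\eps}))$ is tuned precisely for this) that the sum of the three coefficients on the right-hand side of \eqref{eq:grad_contraction} is at most $1$; this is a short computation bounding $\overline{\lambda}_n/(\mu + \overline{\lambda}_n)$ using $\overline{\lambda}_n \le \overline{\eps} + \gamma_1\mu$ and absorbing the higher-order terms. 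The main obstacle remains the passage from $\mu$-QG to a usable lower bound on $\B_n$ restricted to the relevant subspace, since QG is an inequality about function values, not directly about spectra; I expect this to require a careful argument relating $\nabla f(\param_n)$, $\param_n - \param_*$, and $\nabla^2 f(\param_*)$ via second-order Taylor expansion and the projection onto $\mathcal{S}$, possibly invoking an earlier lemma of the paper that reformulates QG as a gradient-domination or restricted-strong-convexity-type bound.
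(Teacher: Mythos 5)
Your overall architecture tracks the paper's quite closely (same Taylor/line-integral identity reducing the problem to the term $\D_n\boldsymbol{\delta}_n:=(\bE_n+\lambda_n\I)(\hat{\B}_n+\lambda_n\I)^{-1}\nabla f(\param_n)$ plus an $\frac{L_H}{2}\|\boldsymbol{\delta}_n\|^2$ remainder; same choice of reference minimizer $\param_{n,*}$ as a projection onto $\mathcal{S}$; same observation that the component of $\nabla f(\param_n)$ in $\mathrm{Ker}(\nabla^2 f(\param_{n,*}))$ is $O(L_H\|\param_n-\param_{n,*}\|^2)$ and feeds the quadratic error term), but there is a genuine gap at the crux step. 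You bound the linear term by decoupling it as $\|\D_n\|_2\cdot\|(\hat{\B}_n+\lambda_n\I)^{-1}\nabla f(\param_n)\|\le\overline{\lambda}_n\|(\hat{\B}_n+\lambda_n\I)^{-1}\nabla f(\param_n)\|$ and then claim $\|(\hat{\B}_n+\lambda_n\I)^{-1}\nabla f(\param_n)\|\lesssim\frac{1}{\mu+\overline{\lambda}_n}\|\nabla f(\param_n)\|$. That intermediate claim is false in general: $\bE_n$ is only PSD with $\|\bE_n\|_2=\eps_n$, so it need not be bounded below by $\eps_n\I$ along the gradient direction. Take $\nabla^2 f(\param_n)=\mu\I$, $\bE_n=\mathrm{diag}(0,\eps_n)$, $\nabla f(\param_n)\parallel e_1$: then $\|(\hat{\B}_n+\lambda_n\I)^{-1}\nabla f(\param_n)\|=\frac{1}{\mu+\lambda_n}\|\nabla f(\param_n)\|>\frac{1}{\mu+\overline{\lambda}_n}\|\nabla f(\param_n)\|$. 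The best decoupled bound available is $\frac{1}{\mu+\lambda_n}$, so your route yields the coefficient $\frac{\overline{\lambda}_n}{\mu+\lambda_n}$ rather than $\frac{\overline{\lambda}_n}{\mu+\overline{\lambda}_n}$. This is not a bookkeeping issue: when $\eps_n\gtrsim\mu$ (the inexact-overestimation regime that the theorem and Corollary \ref{cor:inexact} are designed for), $\frac{\overline{\lambda}_n}{\mu+\lambda_n}$ can exceed $1$ no matter how small the gradient is (the threshold $\gamma_1$ only controls $\lambda_n$, not $\eps_n$), so neither \eqref{eq:grad_contraction} nor the monotonicity claim follows.

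The fix, which is the paper's key device, is to keep $\D_n$ coupled to the resolvent instead of splitting off its operator norm. In the counterexample above, $\D_n(\hat{\B}_n+\lambda_n\I)^{-1}\nabla f(\param_n)$ has coefficient $\frac{\lambda_n}{\mu+\lambda_n}\le\frac{\overline{\lambda}_n}{\mu+\overline{\lambda}_n}$: each eigendirection of $\D_n$ with eigenvalue $\sigma$ contributes roughly $\frac{\sigma}{\mu+\sigma}$, and since $x\mapsto x/(\mu+x)$ is increasing, the worst case is $\sigma=\|\D_n\|_2=\overline{\lambda}_n$. The paper formalizes this in Lemma \ref{lem:ABCv} via a Schur-complement argument, applied after writing $\hat{\B}_n+\lambda_n\I=\nabla^2 f(\param_{n,*})+\D_n-\bR_n$ and pulling out the correction factor $\bigl(\I-(\nabla^2 f(\param_{n,*})+\D_n)^{-1}\bR_n\bigr)^{-1}$ with norm at most $\frac{1}{1-A}$, which is where the $\frac{5}{4}\|\nabla f(\param_n)\|^{3/2}$ term comes from. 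Your sketch would need this coupled matrix inequality (or an equivalent) to recover the stated coefficient; with it, the rest of your outline (the $\bQ_n$-component bound, the $\|\boldsymbol{\delta}_n\|\lesssim\|\nabla f(\param_n)\|/\mu$ bound for the quadratic remainder, and the tuning of $\gamma_1$ for monotonicity) matches the paper's proof.
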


We may specialize this result depending on whether the worst-case Hessian overestimation error $\overline{\eps}$ is zero or not. First, suppose $\overline{\eps}=0$, then $\eps_n=0$ for all $n\geq 0$. Thus $\gamma_1=0.2,$ and $\mu+\overline{\lambda_n}\leq 1.2\mu.$ 
Then if $n\geq N_1$, Thm. \ref{thm:local_unified} gives the following  gradient norm contraction
\begin{align}\label{eq:grad_contraction2}
\|\nabla f(\param_{n+1})\|\leq \frac{25}{24}\frac{\sqrt{L_H}}{\mu}\|\nabla f(\param_n)\|^{3/2}+\left(\frac{25}{24}+2 \right)\frac{L_H}{\mu^2}\|\nabla f(\param_n)\|^2.
\end{align}
Because of our choice of $n\ge N_{1}$, the quadratic term in the RHS above is dominated by the first term, so we get a super-linear convergence rate as stated below. 



\begin{corollary}[Superlinear local convergence with exact Hessian]\label{cor:exact}
Keep the same setting as in Thm. \ref{thm:local_unified} and further assume $\bar{\eps}=0$. 
Let $N_{1}$ be as in \eqref{eq:Ni}.
Then almost surely for any $n\geq  N_{1}$, 
\begin{align*}
    \|\nabla f(\param_{n+1})\|\leq \frac{25}{12}\frac{\sqrt{L_H}}{\mu}\|\nabla f(\param_n)\|^{3/2}. 
\end{align*}
In particular, we have superlinear local convergence for $n$ sufficiently large. 
\end{corollary}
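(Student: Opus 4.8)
The plan is to derive everything from the one-step contraction \eqref{eq:grad_contraction2}, which the discussion preceding the corollary already obtains from Theorem \ref{thm:local_unified} by setting $\overline{\eps}=0$ (so $\eps_n\equiv 0$, $\gamma_1=0.2$, $\overline{\lambda}_n=\sqrt{L_H\|\nabla f(\param_n)\|}$, and $\mu+\overline{\lambda}_n\le 1.2\mu$ on $\{n\ge N_1\}$). Thus only two things remain: to show that on $\{n\ge N_1\}$ the quadratic term on the right of \eqref{eq:grad_contraction2} is dominated by the $3/2$-power term, and then to iterate the resulting bound.

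First I would record the smallness fact used throughout: on the event $\{n\ge N_1\}$ one has $\|\nabla f(\param_n)\|<(\gamma_1\mu)^2/L_H=\mu^2/(25L_H)$, equivalently $\sqrt{L_H\|\nabla f(\param_n)\|}/\mu<\gamma_1=1/5$. This needs a little care with the random time $N_1$: by the definition $N_1=N(\gamma_1)$ the smallness holds at the (random) index $N_1$, and Theorem \ref{thm:local_unified} gives $\|\nabla f(\param_{m+1})\|\le\|\nabla f(\param_m)\|$ on $\{m\ge N_1\}$, so a one-step induction propagates it to every $m\ge N_1$. Next, writing $\frac{L_H}{\mu^2}\|\nabla f(\param_n)\|^2=\frac{\sqrt{L_H\|\nabla f(\param_n)\|}}{\mu}\cdot\frac{\sqrt{L_H}}{\mu}\|\nabla f(\param_n)\|^{3/2}$ and comparing coefficients, the quadratic term in \eqref{eq:grad_contraction2} is at most $\frac{73}{25}\cdot\frac15=\frac{73}{125}<1$ times its $3/2$-power term; hence the right-hand side of \eqref{eq:grad_contraction2} is at most twice its $3/2$-power term, which is exactly $\frac{25}{12}\frac{\sqrt{L_H}}{\mu}\|\nabla f(\param_n)\|^{3/2}$, giving the displayed inequality.

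To extract the ``in particular'' superlinear statement on $\{N_1<\infty\}$, I would rescale: put $C:=\frac{25}{12}\sqrt{L_H}/\mu$ and $u_n:=C^2\|\nabla f(\param_n)\|$, so the inequality just proven reads $u_{n+1}\le u_n^{3/2}$; the smallness fact gives $u_{N_1}<C^2\mu^2/(25L_H)=\frac{25}{144}<1$, whence $u_{N_1+j}\le u_{N_1}^{(3/2)^{j}}\to 0$ doubly exponentially and in particular $\|\nabla f(\param_{n+1})\|/\|\nabla f(\param_n)\|\le C\|\nabla f(\param_n)\|^{1/2}\to 0$.

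I do not expect a genuine obstacle here: the whole argument is a couple of lines once \eqref{eq:grad_contraction2} is in hand. The one place to be attentive is the handling of the random stopping time $N_1$ in the first step --- Theorem \ref{thm:local_unified} only supplies its conclusions on the event $\{n\ge N_1\}$, so the uniform smallness of $\|\nabla f(\param_m)\|$ over $m\in\{N_1,N_1+1,\dots,n\}$ must be built up by forward induction from $m=N_1$ rather than assumed outright; after that, the constant arithmetic is routine.
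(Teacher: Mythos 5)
Your proposal is correct and follows essentially the same route as the paper, which likewise deduces the corollary from the specialized contraction \eqref{eq:grad_contraction2} together with the observation that, since $\sqrt{L_H\|\nabla f(\param_n)\|}/\mu<\gamma_1=0.2$ for $n\ge N_1$, the quadratic term is dominated by the $3/2$-power term (giving the factor-of-two worsening of $\tfrac{25}{24}$ to $\tfrac{25}{12}$). You merely make explicit what the paper leaves implicit — the propagation of the smallness condition beyond the index $N_1$ via the monotonicity claim of Theorem \ref{thm:local_unified} and the iteration yielding the doubly exponential decay — and these details check out.
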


We remark that when $\overline{\eps}=0$ our algorithm coincides with GRN \cite{mishchenko2023regularized}. Hence Cor. \ref{cor:exact} extends the superlinear convergence of GRN under $\mu$-strong convexity in \cite[Thm. 2]{mishchenko2023regularized}.

Next, we consider the case when the Hessian overestimation error is nonzero. In this case, we have a local linear convergence with rate given by the following ``effective condition number'' $\frac{1.2\mu + \overline{\eps}_{\E}}{\mu}$. 

\begin{corollary}[Linear local convergence with overestimated Hessian]\label{cor:inexact}
Keep the same setting as in Thm. \ref{thm:local_unified} and further assume $\bar{\eps}\in(0,\infty)$. 
Let $N_{1}$ be as in \eqref{eq:Ni}. Fix any $\delta \in (0,1)$ and let $N_{2}=N(\gamma_{1}\delta)$. Then  for all $n>N_{2}$, we have linear convergence in expectation: For all $m\ge 1$,
\begin{align*}
    \E[\|\nabla f(\param_{n+m})\| \,|\, n>N_{2}]&\leq \left(1-(1-\delta)\frac{\mu}{1.2\mu+\overline{\eps}_{\E}}\right)^{m}\E[\|\nabla f(\param_n)\| \,|\, n>N_{2}]. 
\end{align*}
\end{corollary}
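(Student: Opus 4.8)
The plan is to specialize the deterministic contraction \eqref{eq:grad_contraction} of Theorem~\ref{thm:local_unified} to the regime where the gradient is already small, pass to conditional expectation using concavity, and then iterate. Write $g_n:=\|\nabla f(\param_n)\|$ and $q:=1-(1-\delta)\mu/(1.2\mu+\overline{\eps}_{\E})$ for the target rate. Two elementary observations set up the argument. By Theorem~\ref{thm:local_unified} the sequence $(g_m)_{m\ge N_1}$ is a.s.\ non-increasing. Since $\gamma\mapsto N(\gamma)$ is non-increasing and $\gamma_1\delta<\gamma_1$, we have $N_2=N(\gamma_1\delta)\ge N(\gamma_1)=N_1$; combined with $g_{N_2}<\min\{(\gamma_1\delta\mu)^2/L_H,\eta\}$, this gives $g_m<\min\{(\gamma_1\delta\mu)^2/L_H,\eta\}$ for all $m\ge N_2$. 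Hence on $\{n>N_2\}$, for every $m\ge n$ one has $\sqrt{L_H g_m}/\mu<\gamma_1\delta$ and $L_H g_m/\mu^2<(\gamma_1\delta)^2$.

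Factoring $g_n$ out of the first two terms of \eqref{eq:grad_contraction} and substituting these bounds gives, almost surely on $\{n>N_2\}$,
\[
g_{n+1}\le\Bigl[\tfrac{\overline{\lambda}_n}{\mu+\overline{\lambda}_n}\bigl(1+\tfrac{5}{4}\gamma_1\delta\bigr)+2(\gamma_1\delta)^2\Bigr]g_n,\qquad\text{where }\ \overline{\lambda}_n=\eps_n+\sqrt{L_H g_n}\le\eps_n+\gamma_1\delta\mu .
\]
Because $x\mapsto x/(\mu+x)$ is increasing, the bound $\overline{\lambda}_n\le\eps_n+\gamma_1\delta\mu$ replaces the leading coefficient by $h(\eps_n)$, where $h(x):=(x+\gamma_1\delta\mu)/(\mu(1+\gamma_1\delta)+x)$ is increasing and concave on $[0,\infty)$ (its derivative is $\mu/(\mu(1+\gamma_1\delta)+x)^2$).

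Next take the conditional expectation given the $\sigma$-algebra $\mathcal{F}_n$ generated by the Hessian-oracle randomness through step $n-1$; then $g_n$ and the event $\{n>N_2\}$ are $\mathcal{F}_n$-measurable, the oracle draw at step $n$ is fresh, and $\E[\eps_n\mid\mathcal{F}_n]\le\overline{\eps}_{\E}$ by \eqref{eq:def_worst_case_overestimation_errors} (the iterates stay in $\mathcal{L}$). Jensen's inequality for the concave increasing $h$ then gives $\E[h(\eps_n)\mid\mathcal{F}_n]\le h(\overline{\eps}_{\E})=(\overline{\eps}_{\E}+\gamma_1\delta\mu)/(\mu+\overline{\eps}_{\E}+\gamma_1\delta\mu)$ on $\{n>N_2\}$, so the per-step claim reduces to the scalar inequality
\[
\frac{\overline{\eps}_{\E}+\gamma_1\delta\mu}{\mu+\overline{\eps}_{\E}+\gamma_1\delta\mu}\bigl(1+\tfrac{5}{4}\gamma_1\delta\bigr)+2(\gamma_1\delta)^2\ \le\ q .
\]
I would verify this by writing $t:=\overline{\eps}_{\E}/\mu$ and $a:=\gamma_1\delta$: after elementary estimates it reduces to checking $a\bigl(\tfrac{5}{4}+2a\bigr)\le\delta/(1.2+t)$, and because $\overline{\eps}_{\E}\le\overline{\eps}$ the definition $\gamma_1=\min\{0.2,\tfrac{20}{33}\cdot\tfrac{\mu}{1.2\mu+\overline{\eps}}\}$ forces simultaneously $a\le0.2$ and $a\le\tfrac{20}{33}\cdot\tfrac{\delta}{1.2+t}$, whence $a\bigl(\tfrac54+2a\bigr)\le\tfrac{20}{33}\bigl(\tfrac54+0.4\bigr)\tfrac{\delta}{1.2+t}=\tfrac{\delta}{1.2+t}$. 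The constant $\tfrac{20}{33}$ is chosen so that this bound is exactly tight.

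Finally, iterate. With $q$ as above, the previous step yields $\E\bigl[g_{m+1}\mathbf{1}_{\{n>N_2\}}\mid\mathcal{F}_m\bigr]\le q\,g_m\mathbf{1}_{\{n>N_2\}}$ for every $m\ge n$, using $\{n>N_2\}\subseteq\{m>N_2\}$ and $\{n>N_2\}\in\mathcal{F}_n\subseteq\mathcal{F}_m$; by the tower property this gives $\E[g_{n+m}\mathbf{1}_{\{n>N_2\}}]\le q^m\E[g_n\mathbf{1}_{\{n>N_2\}}]$, and dividing by $\P(n>N_2)$ yields the stated bound. The main difficulty I anticipate lies not in any single inequality but in making the conditional-expectation machinery airtight — fixing the filtration, checking measurability of $\{n>N_2\}$, and securing the monotonicity $g_m\downarrow$ past $N_2$ so that the one-step estimate can legitimately be chained — together with the final algebraic step, where the constants leave essentially no slack.
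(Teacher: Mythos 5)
Your proposal is correct and follows essentially the same route as the paper: specialize the contraction \eqref{eq:grad_contraction} of Theorem \ref{thm:local_unified} on the event $n>N_{2}$ using $\sqrt{L_H\|\nabla f(\param_n)\|}<\gamma_1\delta\mu$, exploit concavity and monotonicity in $\eps_n$ together with $\E[\eps_n\mid\mathcal{F}_n]\le\overline{\eps}_{\E}$ via Jensen, verify the resulting scalar inequality through the $\tfrac{20}{33}$/$0.2$ constants defining $\gamma_1$, and iterate using the a.s.\ monotone decrease of the gradient norm past $N_1\le N_2$. Your treatment of the filtration, the measurability of $\{n>N_2\}$, and the chaining of the one-step bound is in fact somewhat more careful than the paper's, but it is the same argument.
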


Corollary \ref{cor:inexact} can also be deduced from the main local convergence result (Thm. \ref{thm:local_unified}) without much difficulty. 
From \eqref{eq:grad_contraction} one can deduce that whenever $n>N_{2}$, 
\begin{align*}
\|\nabla f(\param_{n+1})\| &\le \frac{(\delta+0.2)\mu+\eps_n}{1.2\mu+\eps_n}\|\nabla f(\param_n)\|. 
\end{align*}
Then, taking the conditional expectation on the event $n> N_2$ using Jensen's inequality gives the desired result. 
{
    
One can compare this rate with that of Nesterov's accelerated gradient descent (NAGD) and gradient descent (GD). For $\mu$-strongly convex and $L$-smooth objectives with condition number $\kappa=L/\mu$, NAGD and GD have local convergence rates $(1-1/\sqrt{\kappa})^n$ and $(1-1/\kappa)^n$ respectively. These results are known under weaker assumptions, such as the quadratic growth condition \cite{necoara2019linear}. These linear rates can deteriorate significantly for ill-conditioned problems, and the benefit of cheap iterations is often not sufficient to compensate for such weakness. Our RON has a superior local convergence rate as we demonstrate in Thm. \ref{thm:local_unified}. With exact Hessian overestimation, it has superlinear convergence under $\mu$-QG condition. With inexact Hessian overestimation, it has linear convergence, our Cor. \ref{cor:inexact} (after recasting the gradient norm to optimality gap) yields local convergence rate $E[f(\param_n)-f^*]=O((1-\frac{\mu}{1.2\mu+\bar{\eps}_{E}})^{2n}).$ This indicates that our local linear rate is superior to that of (NA)GD when $\bar{\eps}_{E}\ll L-0.2\mu$. In particular, if the overestimation error $\bar{\eps}_E\le C\mu$ for some constant $C$, then our local convergence rate bound is independent of the condition number. These advantages in local convergence rates come with a relatively cheap per-iteration cost for a low-rank local landscape. RON also allows the use of randomized Hessian approximation oracles. 
}

\section{Global convergence for convex objectives}
\label{sec:global_pf}
In this section we will prove Theorem \ref{thm:convex}. Following the literature on cubic regularized Newton \cite{nesterov2006cubic} and globally regularized Newton \cite{mishchenko2023regularized}, we denote 
\begin{align*}
	r_{n}:= \|\param_{n+1}-\param_n\|.
\end{align*}
We will also denote the (possibly randomized) overestimation error of the Hessian of the objective evaluated at the $n$-th parameter $\param_{n}$ and its spectral norm as 
\begin{align*}
	\bE_{n}:=\hat{\B}_n-\nabla^{2} f(\param_{n}), \quad \eps_n=\|\bE_{n}\|_2,
\end{align*}
notice that $\bE_n$ is PSD by \eqref{eq:RON_high_level}. We will deduce several lemmas for the Newton-type update \eqref{eq:RON_high_level}. Throughout this section we will also denote
\[
\overline{\lambda}_n:=\lambda_n+\eps_n
\]


\begin{lemma}[Stability]\label{lem:stability}
	Suppose \ref{assumption:A1} holds. 
	Fix $n\ge 0$, $\param_{n}$, and let $\param_{n+1}$ be as in \eqref{eq:RON_high_level}. Then almost surely,
    \begin{align}
        L_{H}
        r_{n} &\le \frac{ \lambda_{n}^{2}}{\lambda_{n}+\sigma_{\min}(\hat{\B}_{n}) },  
        \label{eq:stability1} \\
        \|\nabla f(\param_{n+1})\| & \leq \frac{3}{2}\overline{\lambda}_n r_{n}\leq  \frac{3}{2}\frac{\overline{\lambda}_n}{\lambda_n+\sigma_{\min}({\hat{\B}_n})}\|\nabla f(\param_n)\|\label{eq:stability2}.
    \end{align}
\end{lemma}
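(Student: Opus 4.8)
\textbf{Proof proposal for Lemma \ref{lem:stability}.}

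The plan is to work directly from the update rule \eqref{eq:RON_high_level}, which gives $(\hat{\B}_n + \lambda_n\I)(\param_{n+1}-\param_n) = -\nabla f(\param_n)$, and to use the Hessian Lipschitz bound from \ref{assumption:A1}. First I would establish \eqref{eq:stability1}. The starting point is the standard consequence of $L_H$-Lipschitz Hessian applied along the segment from $\param_n$ to $\param_{n+1}$:
\[
\nabla f(\param_{n+1}) = \nabla f(\param_n) + \int_0^1 \nabla^2 f(\param_n + t(\param_{n+1}-\param_n))\,(\param_{n+1}-\param_n)\,dt,
\]
so that, substituting $\nabla f(\param_n) = -(\hat{\B}_n+\lambda_n\I)(\param_{n+1}-\param_n)$ and writing $\nabla^2 f(\param_n) = \hat{\B}_n - \bE_n$, one obtains an expression for $\nabla f(\param_{n+1})$ as $(\hat{\B}_n+\lambda_n\I)$ minus the averaged Hessian, all acting on $\param_{n+1}-\param_n$, plus the $\bE_n$ term. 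To get \eqref{eq:stability1} I would take norms on the identity $0 = \nabla f(\param_n) + \nabla^2 f(\param_n)(\param_{n+1}-\param_n) + (\lambda_n\I + \bE_n)(\param_{n+1}-\param_n)$ rearranged appropriately; more precisely, the cleanest route is to note that $\nabla f(\param_n) + \int_0^1 \nabla^2 f(\param_n+t(\param_{n+1}-\param_n))(\param_{n+1}-\param_n)dt$ differs from $0$ by a term controlled by $\tfrac{L_H}{2}r_n^2$, and simultaneously equals $(\lambda_n\I + \bE_n - [\text{Hessian averaging error}])(\param_{n+1}-\param_n)$. Lower-bounding the operator $\hat{\B}_n + \lambda_n\I \succeq (\sigma_{\min}(\hat{\B}_n)+\lambda_n)\I$ and upper-bounding the Hessian-increment error by $\tfrac{L_H}{2}r_n^2$ yields a quadratic inequality in $r_n$ of the form $(\lambda_n + \sigma_{\min}(\hat{\B}_n))r_n \le \|\nabla f(\param_n)\| \le \lambda_n^2/(\lambda_n+\sigma_{\min}(\hat{\B}_n))$ is not quite it — rather I expect the bound $L_H r_n^2/2 \le \lambda_n r_n$ type estimate combined with $\lambda_n = \sqrt{L_H\|\nabla f(\param_n)\|}$, i.e. $\|\nabla f(\param_n)\| = \lambda_n^2/L_H$, to give $L_H r_n \le \lambda_n^2/(\lambda_n + \sigma_{\min}(\hat{\B}_n))$ after dividing through; this is exactly the stated form once one uses $\|\nabla f(\param_n)\| = \lambda_n^2/L_H$ and the operator lower bound.

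For \eqref{eq:stability2}, I would bound $\|\nabla f(\param_{n+1})\|$ by writing it as the sum of (i) the Hessian-averaging error term, of size at most $\tfrac{L_H}{2}r_n^2$, and (ii) the term $(\lambda_n\I + \bE_n)(\param_{n+1}-\param_n)$, of size at most $(\lambda_n + \eps_n)r_n = \overline{\lambda}_n r_n$. Using \eqref{eq:stability1} in the form $\tfrac{L_H}{2}r_n \le \tfrac12 \lambda_n^2/(\lambda_n+\sigma_{\min}(\hat{\B}_n)) \le \tfrac12\lambda_n \le \tfrac12\overline{\lambda}_n$ controls the quadratic term by $\tfrac12\overline{\lambda}_n r_n$, and adding the linear term gives $\|\nabla f(\param_{n+1})\| \le \tfrac32\overline{\lambda}_n r_n$. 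The second inequality in \eqref{eq:stability2} then follows by bounding $r_n = \|(\hat{\B}_n+\lambda_n\I)^{-1}\nabla f(\param_n)\| \le \|\nabla f(\param_n)\|/(\lambda_n + \sigma_{\min}(\hat{\B}_n))$.

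The main obstacle I anticipate is getting the constants and the precise arrangement in \eqref{eq:stability1} exactly right: one has to be careful that the Hessian-increment error is measured between $\param_n$ and the intermediate points (giving $\tfrac{L_H}{2}r_n^2$) rather than between iterates, that the sign/PSD-ness of $\bE_n$ is used correctly (here $\bE_n \succeq \mathbf{O}$, so $\hat{\B}_n \succeq \nabla^2 f(\param_n)$ keeps the regularized operator well-conditioned from below), and that the substitution $\lambda_n^2 = L_H\|\nabla f(\param_n)\|$ is invoked at the right moment to convert a bound on $\|\nabla f(\param_n)\|$ into the stated form. Everything else is routine norm estimation, so I would spend the bulk of the writeup pinning down that quadratic-inequality step cleanly.
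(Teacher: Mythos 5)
Your proposal is correct and follows essentially the same route as the paper: for \eqref{eq:stability2} you split $\nabla f(\param_{n+1})$ into the Taylor remainder (bounded by $\tfrac{L_H}{2}r_n^2$) plus the $(\bE_n+\lambda_n\I)(\param_{n+1}-\param_n)$ term, absorb the quadratic piece via \eqref{eq:stability1} to get the factor $\tfrac32\overline{\lambda}_n r_n$, and finish with $r_n \le \|\nabla f(\param_n)\|/(\lambda_n+\sigma_{\min}(\hat{\B}_n))$, exactly as in the paper. The only remark is that \eqref{eq:stability1} needs none of the Lipschitz-Hessian/integral machinery you initially invoke: it is just that same elementary bound $(\lambda_n+\sigma_{\min}(\hat{\B}_n))\,r_n \le \|\nabla f(\param_n)\|$ combined with $\|\nabla f(\param_n)\| = \lambda_n^2/L_H$, which is where your paragraph lands in the end.
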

\begin{proof}
    The second inequality in \eqref{eq:stability2} follows from $\overline{\lambda}_{n}\ge 0$ and 
	\begin{align*}
		r_n=\|\param_{n+1}-\param_{n}\| =\|  (\hat{\B}_{n}+\lambda_{n}\I)^{-1}\nabla f(\param_n)\| 
		\leq \frac{1}{\lambda_{n}+\sigma_{\min}(\hat{\B}_n)}\|\nabla f(\param_n)\|.
	\end{align*}
    
    Since $\lambda_n^{2}= L_H\|\nabla f(\param_n)\|$, continuing the calculation above, 
    \[
    r_n
		\leq \frac{1}{\lambda_{n}+\sigma_{\min}(\hat{\B}_n)}\|\nabla f(\param_n)\| =  \frac{\lambda_n^2}{L_H(\lambda_{n}+\sigma_{\min}(\hat{\B}_n))},
    \]
     which proves \eqref{eq:stability1}. 
	Next, rearranging the update in \eqref{eq:RON_high_level}, we can write 
	\begin{align*}
		\nabla f(\param_{n})=-\lambda_{n} (\param_{n+1}-\param_{n})-\hat{\B}_{n} (\param_{n+1}-\param_{n}) . 
	\end{align*}
Using this, note that 
\begin{align*}
	&\hspace{-0.5cm}\|\nabla f(\param_{n+1})\|\\
    &= \|\nabla f(\param_{n+1})-\nabla f(\param_{n})+\nabla f(\param_{n})\|\\
    &= \|\nabla f(\param_{n+1}) - \nabla f(\param_n) - \hat{\B}_{n}(\param_{n+1} - \param_n) - \lambda_n (\param_{n+1} - \param_n) \| \notag \\
	&\leq \|\nabla f(\param_{n+1}) - \nabla f(\param_n) - \nabla^2f(\param_n)(\param_{n+1} - \param_n)\| +  \|(\bE_n+\lambda_n\mathbf{I})(\param_{n+1} - \param_n)\| \notag \\
	&\leq \frac{L_{H}}{2} r_{n}^{2} +\overline{\lambda}_{n} r_{n} \le \left(\frac{1}{2}+1\right)\overline{\lambda}_nr_{n},
\end{align*}
where the last inequality uses \eqref{eq:stability1} and $\lambda_n\leq \overline{\lambda}_n$. 
\end{proof}

Next, we prove a lower bound for per-iteration improvement in objective value. 

\begin{lemma}[Descent]\label{lem:descent}  
	Under the same setting as in Lem. \ref{lem:stability}, almost surely,
	\begin{align}\label{eq:perstep_impv}
		f(\param_n)-f(\param_{n+1})\geq 		
                \frac{1}{2}(\lambda_n+\sigma_{\min}(\hat{\B}_{n})) r_{n}^{2} 
	\end{align}
\end{lemma}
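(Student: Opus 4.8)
The plan is to combine the second-order Taylor expansion of $f$ with the overestimation property $\hat{\B}_n \succeq \nabla^2 f(\param_n)$ and the Hessian-Lipschitz bound, then use the stability estimate \eqref{eq:stability1} to absorb the cubic error term. First I would write $\param_{n+1}-\param_n = -(\hat{\B}_n+\lambda_n\I)^{-1}\nabla f(\param_n) =: s_n$, so that $\nabla f(\param_n) = -(\hat{\B}_n+\lambda_n\I)s_n$. By the integral form of Taylor's theorem and \ref{assumption:A1} (Lipschitz Hessian on $\mathcal{L}$, noting the whole trajectory stays in $\mathcal{L}$ by the descent being proved inductively, or by a standard argument),
\[
f(\param_{n+1}) \le f(\param_n) + \langle \nabla f(\param_n), s_n\rangle + \tfrac12 \langle \nabla^2 f(\param_n) s_n, s_n\rangle + \tfrac{L_H}{6}\|s_n\|^3.
\]
Substituting $\nabla f(\param_n) = -(\hat{\B}_n+\lambda_n\I)s_n$ gives $\langle \nabla f(\param_n), s_n\rangle = -\langle (\hat{\B}_n+\lambda_n\I)s_n, s_n\rangle$, so the first-plus-second-order part becomes
\[
-\langle (\hat{\B}_n+\lambda_n\I)s_n, s_n\rangle + \tfrac12\langle \nabla^2 f(\param_n)s_n,s_n\rangle = -\tfrac12\langle (\hat{\B}_n + 2\lambda_n\I)s_n,s_n\rangle - \tfrac12\langle (\hat{\B}_n - \nabla^2 f(\param_n))s_n, s_n\rangle.
\]
The term $\langle (\hat{\B}_n - \nabla^2 f(\param_n))s_n, s_n\rangle = \langle \bE_n s_n, s_n\rangle \ge 0$ since $\bE_n$ is PSD, so it only helps; dropping it, I get
\[
f(\param_n) - f(\param_{n+1}) \ge \tfrac12\langle (\hat{\B}_n + 2\lambda_n\I)s_n, s_n\rangle - \tfrac{L_H}{6}\|s_n\|^3 \ge \tfrac12(\sigma_{\min}(\hat{\B}_n) + 2\lambda_n)r_n^2 - \tfrac{L_H}{6}r_n^3,
\]
using $r_n = \|s_n\|$ and $\langle (\hat{\B}_n+2\lambda_n\I)s_n,s_n\rangle \ge (\sigma_{\min}(\hat{\B}_n)+2\lambda_n)\|s_n\|^2$.

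It remains to show $\tfrac{L_H}{6}r_n^3 \le \tfrac12 \lambda_n r_n^2$, i.e. $L_H r_n \le 3\lambda_n$, after which the $2\lambda_n$ splits as $\lambda_n$ (consumed) plus $\lambda_n$ (kept), leaving $f(\param_n)-f(\param_{n+1}) \ge \tfrac12(\sigma_{\min}(\hat{\B}_n)+\lambda_n)r_n^2$ as claimed. This follows immediately from the stability bound \eqref{eq:stability1}: $L_H r_n \le \lambda_n^2/(\lambda_n + \sigma_{\min}(\hat{\B}_n)) \le \lambda_n^2/\lambda_n = \lambda_n \le 3\lambda_n$ (in fact we only need the much weaker inequality $L_H r_n \le 3\lambda_n$, so this is comfortable).

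I do not anticipate a genuine obstacle here; the only point requiring a little care is the justification that $\param_n, \param_{n+1} \in \mathcal{L}$ so that the Lipschitz-Hessian inequality applies — this is handled by the standard inductive argument that each step decreases $f$ (the inequality being proved here), so $\param_{n+1}\in\mathcal{L}$ whenever $\param_n\in\mathcal{L}$, with $\param_0\in\mathcal{L}$ by definition. The other mild subtlety is that \eqref{eq:stability1} is stated with $\sigma_{\min}(\hat{\B}_n)$ in the denominator and could in principle have $\sigma_{\min}(\hat{\B}_n)=0$, but even then $L_H r_n \le \lambda_n \le 3\lambda_n$ holds, so nothing breaks. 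Everything else is the routine Taylor/PSD bookkeeping sketched above.
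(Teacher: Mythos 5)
Your proposal is correct and follows essentially the same route as the paper: Taylor expansion with the $L_H$-Lipschitz Hessian, substitution of $\nabla f(\param_n)=-(\hat{\B}_n+\lambda_n\I)(\param_{n+1}-\param_n)$, positivity of $\bE_n=\hat{\B}_n-\nabla^2 f(\param_n)$, and the stability bound \eqref{eq:stability1} to absorb the cubic term into a spare $\tfrac12\lambda_n r_n^2$. The only difference is cosmetic bookkeeping in how the quadratic form is split (the paper keeps the $\bE_n$ term grouped with $\lambda_n\I$ rather than dropping it separately), so the two arguments are the same.
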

\begin{proof}    
    By the $L_{H}$-Lipschitz continuity of the Hessian, 
    (see \cite{nesterov2006cubic}), we have
	\begin{align*}
		&f(\param_n)-f(\param_{n+1}) \\
        &\quad \geq -\innp{\nabla f(\param_n),\param_{n+1}-\param_{n}}-\frac{1}{2}\innp{\nabla^2f(\param_n)(\param_{n+1}-\param_n),\param_{n+1}-\param_n}-\frac{L_H}{6}r_n^3.
	\end{align*}
   Recall we can write $-\nabla f(\param_n)=(\hat{\B}_n+\lambda_n\mathbf{I})(\param_{n+1}-\param_n)$ and $\hat{\B}_n=\nabla^{2}f(\param_{n})+\bE_{n}$, then
	\begin{align*}
		f(\param_n)-f(\param_{n+1}) &\geq \innp{(\hat{\B}_n+\lambda_n\mathbf{I})(\param_{n+1}-\param_n),\param_{n+1}-\param_{n}} \\
        & \hspace{2cm} -\frac{1}{2}\innp{\nabla^2f(\param_n)(\param_{n+1}-\param_n),\param_{n+1}-\param_n}-\frac{L_H}{6}r_n^3\\
		&=\frac{1}{2}\innp{ \underbrace{(\hat{\B}_n+\lambda_n\mathbf{I})}_{\succeq {(\lambda_n+\sigma_{\min}({\hat{\B}_{n}}))}\mathbf{I}}(\param_{n+1}-\param_n),\param_{n+1}-\param_{n}} \\
        &\hspace{2cm} +\frac{1}{2}\innp{ \underbrace{(\lambda_{n}\I +\bE_{n}}_{\succeq {\lambda_n}\mathbf{I}} )(\param_{n+1}-\param_n),\param_{n+1}-\param_n}-\frac{L_H}{6}r_n^3\\
		&\geq 
        \frac{1}{2}(\lambda_n+\sigma_{\min}({\hat{\B}_{n}})) r_n^2+ 
       \frac{1}{2}\lambda_n r_n^2-\frac{L_H}{6}r_n^3 
	\end{align*}
    Then notice that by \eqref{eq:stability1}, $\frac{L_H}{6}r_n^3\leq \frac{1}{6}r_n^2\frac{\lambda_n^2}{\lambda_n+\sigma_{\min}(\hat{\B_n})}\leq\frac{1}{6}\lambda_nr_n^2$. 
    Hence  continuing the calculation we have
	\begin{align*}
	f(\param_n)-f(\param_{n+1}) &\geq 
        \frac{1}{2}(\lambda_n+\sigma_{\min}({\hat{\B}_{n}})) r_n^2+ 
       \frac{1}{2}\lambda_n r_n^2-\frac{L_H}{6}r_n^3\\
       &\geq \frac{1}{2}(\lambda_n+\sigma_{\min}({\hat{\B}_{n}})) r_n^2.
	\end{align*}
\end{proof}

We are now ready to establish two-phase complexity of our algorithm stated in Thm. \ref{thm:convex}. Our proof follows the approach in \cite{mishchenko2023regularized}. 

\begin{proof}[\textbf{Proof of Theorem \ref{thm:convex}}]
	
	We also denote the objective value gap $A_{n}:= f(\param_{n}) - f^{*}$. By the assumption \ref{assumption:A1}, $f^{*} = \inf f$ is attained at some $\param_{*}$. 
	By Lemma \ref{lem:descent}, we have $f(\param_{n})\le \cdots \le f(\param_{0})$.  Also by the assumption \ref{assumption:A1} $\mathcal{L}=\{\param:f(\param)\leq f(\param_0)\}$ is compact    
    with $\| \param_{n} - \param_{*}\|\le D$  for all $n\ge 0$. Then by the convexity of $f$, \begin{align}\label{eq:objective_gap_upper_bd_convex}
		A_n=f(\param_{n}) - f^{*} \le \langle \nabla f(\param_{n}),\, \param_{n}-\param_{*} \rangle \le \| \nabla f(\param_{n}) \| \cdot \| \param_{n}-\param_{*} \| \le D \| \nabla f(\param_{n}) \| . 
	\end{align}
    Recall \eqref{eq:stability2}, and that $\sigma_{\min}(\hat{\B}_n)\geq c\eps_n$, we have
    \begin{align}\label{eq:stability2_simplified}
    \|\nabla f(\param_{n+1})\|\leq \frac{3}{2}\frac{\lambda_n+\eps_n}{\lambda_n+\sigma_{\min}(\hat{\B}_n)}\|\nabla f(\param_n))\|\le \frac{3}{2(1\wedge c)}\|\nabla f(\param_n)\|
    \end{align}
	Following \cite{mishchenko2023regularized}, define sets of indices 
	\begin{align*}
		\mathcal{I}_{\infty}:= \left\{k\ge 0\,|\,  \E[\| \nabla f(\param_{k+1})\|]  \ge \frac{(1\wedge c)}{3}\E[\| \nabla f(\param_{k})\|] \right\}, \qquad 	\mathcal{I}_{k}:= \{i\in \mathcal{I}_{\infty}\,|\, i\le k \}. 
	\end{align*} 

   Fix any fixed $n\in \mathcal{I}_{\infty}$. By Lemma \ref{lem:stability}, 
	\begin{align*}
		\frac{(1\wedge c)}{3} \|\nabla f(\param_{n})  \| \overset{n\in \mathcal{I}_{\infty}}{\le} \|\nabla f(\param_{n+1})  \|  &\overset{\eqref{eq:stability2}}{\le}  \frac{3}{2}\overline{\lambda}_n r_n
	\end{align*}
	Hence $r_{n} \ge \frac{2(1\wedge c)}{9} \frac{\| \nabla f(\param_{n}) \|}{\overline{\lambda}_n}$.
	Moreover, by Lemma \ref{lem:descent}, denoting $B=\frac{1}{2}\left(\frac{2}{9}\right)^2(1\wedge c)^3$, 
    \begin{align*}
			A_{n+1}-A_{n}=f(\param_{n+1}) - f(\param_{n}) 
            &\overset{\eqref{eq:perstep_impv}}{\le} 	-\frac{1}{2} (\lambda_{n}+\sigma_{\min}(\hat{\B}_n)) r_{n}^{2} \\
			&\le - \frac{1}{2} \left(\frac{2(1\wedge c)}{9}\right)^2 \frac{\lambda_{n}+\sigma_{\min}(\hat{\B}_n)}{\overline{\lambda_n}^2}\|\nabla f(\param_n)\|^2\\
            &\overset{(a)}{\le}  -B \frac{\|f(\param_n)\|^2}{\lambda_n+\eps_n} 
            \overset{(b)}{\leq} -B  \frac{(A_{n}/D)^{2}}
            {\sqrt{L_H A_{n}/D}+\eps_n}\\
            &=-BD^{-3/2}\frac{A_n^2}{\sqrt{L_HA_n}+\eps_n\sqrt{D}},
	\end{align*}    
      where (a) uses 
      $\frac{\lambda_n+\sigma_{\min}(\hat{B}_n)}{\overline{\lambda_n}}\geq\frac{\lambda_n+c\eps_n}{\lambda_n+\eps_n}\geq 1\wedge c$ because $\sigma_{\min}(\hat{\B}_n)\ge c\eps_{n}$,
      and (b) 
    follows from \eqref{eq:objective_gap_upper_bd_convex} and the fact that $x\to \frac{x^2}{a+b\sqrt{x}}$ is increasing for all $a,b>0$.


   Recall $\mathcal{F}_{n}$ is the $\sigma$-algebra generated by the information up to iteration $n$, in particular, $A_n$ is measurable w.r.t. $\mathcal{F}_{n}$. First taking the conditional expectation given $\mathcal{F}_{n}$, using Jensen's inequality with $\E[\eps_{n}|\mathcal{F}_n]\le \overline{\eps}_{\E}$, we get 
   \begin{align*}
       \E[A_{n+1}|\mathcal{F}_n]-A_n&\leq -BD^{-3/2}\frac{A_n^2}{\sqrt{D}\E[\eps_n|\mathcal{F}_n]+\sqrt{L_HA_n}}\\
       &\le -BD^{-3/2}\frac{A_n^2}{\sqrt{D}\overline{\eps}_{\E}+\sqrt{L_HA_n}}
   \end{align*}
   {\color{orange}}
   Then take the total expectation and use Jensen's inequality again to get 
	\begin{align*}
	\E[A_{n+1}]-\E[A_{n}]  \le  -BD^{-3/2}\frac{\E[A_{n}]^{2}}{\sqrt{D}\overline{\eps}_{\E} +\sqrt{L_{H}}  \, \E[A_{n}]^{1/2}}=-\frac{\E[A_n]^2}{c_1+c_2\E[A_n]^{1/2}},
	\end{align*}
    where we have denoted $ c_1=D^2\overline{\eps}_{\E}/B,
         c_2=D^{3/2}\sqrt{L_H}/B.$
    We will proceed by considering two cases depending on which of the two terms in the denominator above dominates the other. To which end, define
    \[
    T:=\inf_{i\geq 0 }\left\{i:\E[A_i]\leq (c_1/c_2)^2=D\overline{\eps}_{\E}^2/(L_H)\right\}.
    \]
    Recall that by \eqref{eq:perstep_impv}, $A_{k}$ is decreasing almost surely for all $k$. Thus $\E[A_k]$ is decreasing for all $k.$
    Then we have two different recursions
    \begin{align*}
    \begin{cases}
        \E[A_{n+1}]-\E[A_n]\leq -\E[A_n]^{3/2}/(2c_2) & \text{if } n\leq T\\
        \E[A_{n+1}]-\E[A_{n}]\leq -\E[A_{n}]^{2}/(2c_1) & \text{if } n> T\\
    \end{cases}.    
    \end{align*}
    Using similar techniques found in \cite{nesterov2006cubic,mishchenko2023regularized}, the two recursions above give
    \begin{align*}
    \begin{cases}
        \displaystyle\frac{1}{\sqrt{\E[A_{n+1}]}}-\frac{1}{\sqrt{\E[A_{n}]}}\geq \frac{1}{4c_2} & \text{if } n\leq T\\
        \displaystyle\frac{1}{\E[A_{n+1}]}-\frac{1}{\E[A_n]}\geq \frac{1}{2c_1} & \text{if } n> T\\
    \end{cases}
    \end{align*}
    The above arguments hold only for indices $n\in \mathcal{I}_{\infty}$. We can extend the recursive inequalities to all indices by working with the subsequence of indices in $\mathcal{I}_{\infty}$. Let us enumerate the elements in $\mathcal{I}_{\infty}$ as $i_{0}<i_{1}<\cdots$. From monotonic decreasing of $\E[A_k]$ and the fact that $i_{t+1}\geq i_{t}+1$, it follows that
    \begin{align*}
    \begin{cases}
        \displaystyle\frac{1}{\sqrt{\E[A_{i_{t+1}}]}}-\frac{1}{\sqrt{\E[A_{i_t}]}}\geq \frac{1}{4c_2} & \text{if } i_t\leq T\\
        \displaystyle\frac{1}{\E[A_{i_{t+1}}]}-\frac{1}{\E[A_{i_t}]}\geq \frac{1}{2c_1} & \text{if } i_t> T\\
    \end{cases}
    \end{align*}
    For any fixed $n\geq 1$, we consider three different cases.

    \textbf{Case I: $|\mathcal{I}_n|\leq n/2$.} By the definition of $\mathcal{I}_n$ and \eqref{eq:stability2_simplified}, we have
    \[
   \E[\|\nabla f(\param_n)\|]\leq \left(\frac{3}{2(1\wedge c)}\right)^{n/2}\left(\frac{(1\wedge c)}{3}\right)^{n/2}\|\nabla f(\param_0)\|=\left(\frac{1}{2}\right)^{n/2}\|\nabla f(\param_0)\|.
    \]
    By $L$-smoothness, $\| \nabla f(\param_{0}) \| \le \sqrt{2L (f(\param_{0})-f^{*}) }.$ Combine this with \eqref{eq:objective_gap_upper_bd_convex} we have a linear convergence
    \[
    \E[A_{k}]\leq D\sqrt{2LA_0}\left(\frac{1}{\sqrt{2}}\right)^n. 
    \]  
    
    \textbf{Case II: $|\mathcal{I}_n|>n/2$ and $n\leq T$}.
    
    Let $K$ be the largest integer so that $i_{K}\in \mathcal{I}_n$, then $K>n/2.$ Since $i_t\leq T$ for all $t\leq K$, we have
    \[
    \frac{1}{\sqrt{\E[A_{i_{t+1}}]}}\geq \sum_{t=0}^{K-1}\frac{1}{{\sqrt\E[A_{i_{t+1}}]}}-\frac{1}{\sqrt{\E[A_{i_{t}}}]}\geq \frac{K}{4c_2}\geq \frac{n}{8c_2}.
    \]
    Thus follows $\E[A_n]\leq \E[A_{i_K}] \le 64c_{2}^{2}n^{-2}$. 

    Combine Case I and Case II we have for any $n\leq T$, we have
    \[
    \E[A_n]\leq \max\left\{\frac{\sqrt{2LA_0}n^2}{2^{n/2}},64c_2^2\right\}\frac{1}{n^2}=:C_n\frac{1}{n^2},
    \]
    where $c_2^2=\left(\frac{81}{2(1\wedge c^3)}\right)^2D^3L_H.$

     \textbf{Case III: $|\mathcal{I}_n|>n/2$ and $n> T$}. 
    
     Still let $K$ be the largest integer so that $i_{K}\in \mathcal{I}_n$, then $K>n/2.$ Also let $1\leq \tau\leq K$ be the largest integer so that $i_{\tau}\leq T,$ and $\tau=0$ if such an integer does not exist.
     Then
    \[
    \frac{1}{\E[A_{i_K}]}-\frac{1}{\E[A_{i_{\tau}}]}=\sum_{t=\tau}^{K-1}\frac{1}{{\E[A_{i_{t+1}}]}}-\frac{1}{{\E[A_{i_{t}}}]}\geq \frac{K-\tau}{2c_1}\geq \frac{n/2-\tau}{2c_1},
    \]
    and $\E[A_{i_{\tau}}]\le C_n/\tau^{2}$ .
    Then
    \[
    \frac{1}{\E[A_{i_K}]}\geq \frac{n/2-\tau}{2c_1}+\frac{1}{\E[A_{i_{\tau}}]}\geq \frac{n/2-\tau}{2c_1}+\frac{\tau^2}{C_n}\geq n\min \{\frac{1}{4c_1},\frac{1}{2C_n}\}
    \]
    which gives $\E[A_n]\leq \E[A_K]\le \max\{4c_1,2C_n\}n^{-1}$ where $C_n$ is defined in Case II and $c_1=\frac{81}{2(1\wedge c^3)}D^2\overline{\eps}_{\E}.$ Thus we have proved the claimed two phase global convergence.
\end{proof}

\section{Local convergence}
\label{sec:local_pf}
In this section, we prove Theorem \ref{thm:local_unified} and Corollary  \ref{cor:inexact}.
Throughout this section, we assume \ref{assumption:A1} and \ref{assumption:A2} hold.
We first give a remark on equivalent formulations of Assumption \ref{assumption:A2}. 

\begin{remark}\label{rmk:PL}
Fix a local minimizer $\param_{*}$ of a $C^{2}$ objective $f$.  
Rebjock and Boumal \cite{rebjock2024fast} showed that the following conditions of PL, \textit{error bound} (EB),and \textit{quadratic growth} (QG) conditions are `essentially' equivalent: 
\begin{align}\label{eq:local_landscape_assumptions}
        \textup{$\mu$-PL near $\param_*$}&: \textup{$ f(\param)-f(\param_*)\leq \frac{1}{2\mu}\|\nabla f(\param)\|^2$;} \\
        \textup{$\mu$-EB near $\param_*$} &: \mu\ \textup{$\text{dist}(\param,\mathcal{S})\leq \|\nabla f(\param)\|$;} \nonumber \\
         \textup{ $\mu$-QG near $\param_*$} &: \textup{$f(\param)-f(\param_*)\geq \frac{\mu}{2}\text{dist}(\param,\mathcal{S})^2$}.  \nonumber
    \end{align}
    Here, `essentially' means that the constant and the neighborhood can degrade when one deduces one condition from another. {More precisely, assuming the objective $f\in C^2$, from table 1 of \cite{rebjock2024fast}, local $\mu$-PL implies local $\mu$-EB, implies local $\mu$-QG; however in the opposite direction local $\mu$-QG implies local $\mu_1$-EB, implies local $\mu_2$-PL where $\mu_2\le \mu_1\le \mu$. And one can take $\mu_2$ arbitrarily close to $\mu_1$, and $\mu_1$ to $\mu$, by shrinking the neighborhood. Thus, quantitatively, the $\mu$-QG condition we assumed in \ref{assumption:A2} is the weakest among the three.}
\end{remark}

When the objective is merely $\mu$-QG rather than $\mu$-strongly convex, we might encounter Hessians that are not positive definite. The lemma below is convenient for bounding the length of a step made by an algorithm in such a situation.

\begin{lemma}\label{lem:ABCv}
    Consider $d\times d$ matrices $\A,\B, \mathbf{C}$ all PSD, where $\A$ is strictly positive definite. Suppose $V\subset \R^{d}$ is a subspace so that $\|\B\mathbf{v}\|\geq \mu\|\mathbf{v}\|$ for all $\mathbf{v}\in V.$ Then,
    \[
    \|\A\mathbf{C}(\A+\B)^{-1}\mathbf{v}\|\leq \|\mathbf{C}\|_2\frac{\lambda_{\max}(\A)}{\lambda_{\max}(\A)+\mu}\|\mathbf{v}\|.
    \]
\end{lemma}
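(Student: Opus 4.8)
The statement is purely a matrix-norm inequality, so I would prove it by a direct spectral argument, being careful that $\A+\B$ is invertible (it is, since $\A$ is PD and $\B$ is PSD) and that the hypothesis $\|\B\v\|\ge\mu\|\v\|$ on the subspace $V$ is exactly what is needed to control $(\A+\B)^{-1}$ on $V$. The plan is: first reduce to bounding $\|\A(\A+\B)^{-1}\v\|$ for $\v\in V$, since $\|\A\mathbf{C}(\A+\B)^{-1}\v\| \le \|\mathbf{C}\|_2 \|\A(\A+\B)^{-1}\v\|$... wait — that is not quite right, because $\mathbf{C}$ sits between $\A$ and $(\A+\B)^{-1}$, not outside. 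Let me restructure.

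The correct route: write $w := (\A+\B)^{-1}\v$, so $\v = (\A+\B)w = \A w + \B w$, and the goal becomes $\|\A\mathbf{C}w\|\le \|\mathbf{C}\|_2 \frac{\lambda_{\max}(\A)}{\lambda_{\max}(\A)+\mu}\|(\A+\B)w\|$. Since $\|\A\mathbf{C}w\| \le \|\A\|_2 \|\mathbf{C}\|_2 \|w\| = \lambda_{\max}(\A)\|\mathbf{C}\|_2\|w\|$, it suffices to show $\|(\A+\B)w\| \ge (\lambda_{\max}(\A)+\mu)\|w\|$ — but that is false in general (take $w$ an eigenvector of $\A$ for a small eigenvalue, not in $V$). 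The missing ingredient must be that $\v\in V$ forces $w$ to lie in (or near) $V$; indeed if $\B$ maps $V$ into $V$ this would follow, but that is not assumed. So the genuinely delicate point is: \textbf{the only way the hypothesis is usable is if $V$ is $\B$-invariant}, or if one interprets $\mathbf{C}$'s role differently. Rereading: the intended application has $V = \mathrm{Range}(\nabla^2 f)$, $\B$ built from the Hessian (hence $\B$ preserves $V$), and $\A = \rho_n\I$ or similar (a multiple of identity, hence also preserves $V$), and $\mathbf{C}$ is a projection. The cleanest honest proof therefore proceeds under the reading that the relevant vectors stay in $V$; concretely:

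\begin{proof}
Since $\A$ is positive definite and $\B$ is positive semidefinite, $\A+\B$ is positive definite, hence invertible. Fix $\v\in V$ and set $w:=(\A+\B)^{-1}\v$. Because $\A$ and $\B$ are symmetric and commute with the orthogonal projection onto $V$ in the application, $w\in V$; in general we use the hypothesis on $V$ together with $\v\in V$ to obtain $w\in V$. Then
\[
\|\v\| = \|(\A+\B)w\| = \|\A w + \B w\| \ge \|\B w\| - \|\A w\| \ \text{is too lossy;}
\]
instead estimate directly: for any unit $u$, $\langle (\A+\B)w, u\rangle$; taking $u=w/\|w\|$ gives $\|(\A+\B)w\|\,\|w\| \ge \langle \A w,w\rangle + \langle \B w, w\rangle \ge \langle \B w,w\rangle \ge \mu\|w\|^2$ using $\langle \B w,w\rangle = \|\B^{1/2}w\|^2$ and that $\|\B v\|\ge \mu\|v\|$ on $V$ implies $\langle \B w,w\rangle\ge \mu\|w\|^2$ for $w\in V$ after diagonalizing $\B$ on $V$. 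Actually the sharp bound we want is $\|(\A+\B)w\|\ge (\lambda_{\max}(\A)+\mu)\|w\|$, which fails, so we must bound the ratio $\|\A\mathbf{C}w\|/\|\v\|$ another way:
\[
\frac{\|\A\mathbf{C}w\|}{\|\v\|} \le \|\mathbf{C}\|_2\,\frac{\|\A w\|}{\|(\A+\B)w\|}.
\]
Diagonalize $\A$ and $\B$ simultaneously on $V$ (possible in the application, where both commute with $P_V$), writing $w=\sum_i c_i e_i$ with $\A e_i = a_i e_i$, $\B e_i = b_i e_i$, $b_i\ge\mu$. Then
\[
\frac{\|\A w\|^2}{\|(\A+\B)w\|^2} = \frac{\sum_i a_i^2 c_i^2}{\sum_i (a_i+b_i)^2 c_i^2} \le \max_i \frac{a_i^2}{(a_i+b_i)^2} \le \max_i \frac{a_i^2}{(a_i+\mu)^2} = \frac{\lambda_{\max}(\A)^2}{(\lambda_{\max}(\A)+\mu)^2},
\]
using that $t\mapsto t/(t+\mu)$ is increasing on $[0,\infty)$. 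Combining the two displays gives $\|\A\mathbf{C}(\A+\B)^{-1}\v\|\le \|\mathbf{C}\|_2\frac{\lambda_{\max}(\A)}{\lambda_{\max}(\A)+\mu}\|\v\|$, as claimed.
\end{proof}

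\textbf{Main obstacle.} The crux is justifying the simultaneous diagonalization step — i.e., that one may assume $\A$, $\B$ (and, for the $\mathbf{C}$ factor, the relevant projections) respect the subspace $V$. In the paper's intended use this holds because $V$ is an eigenspace structure of the Hessian and $\A,\mathbf{C}$ are polynomials in it (multiples of identity, spectral projections); if the lemma is stated in full generality without such commutation, the inequality as written can fail, so I would expect the actual proof to invoke the specific forms $\A = \rho\I$ (or $\A$ diagonal in the Hessian eigenbasis) that arise from \eqref{eq:RON_high_level}. Absent that, the honest general bound replaces $\mu$ by something like $\mu$ restricted appropriately, and the clean statement requires the commuting hypothesis implicit in the application. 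I would flag this and, if generality is truly intended, add the hypothesis that $V$ is invariant under $\A$ and $\B$ (which is the only case the later proofs need).
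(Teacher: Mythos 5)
Your proposal does not prove the lemma in the form the paper needs, and the gap sits exactly at the point you flagged and then assumed away: your argument only works after adding invariance/commutation hypotheses (that $V$ is invariant under $\A$ and $\B$, and in fact that $\A,\B$ are simultaneously diagonalizable on $V$, with $\mathbf{C}$ compatible). Those hypotheses are not available where the lemma is used: in the proof of Theorem \ref{thm:local_unified} it is invoked with $\A=\D_n=\bE_n+\lambda_n\I$, a generic positive definite matrix with no relation to $V=\mathrm{Range}(\nabla^2 f(\param_{n,*}))$, and with $\mathbf{C}$ the factor $\bigl(\I-(\nabla^2f(\param_{n,*})+\D_n)^{-1}\bR_n\bigr)^{-1}$, which is not even symmetric; so neither $\A V\subseteq V$ nor $[\A,\B]=\mathbf{O}$ holds, and your suggested ``fix'' would make the lemma unusable downstream. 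Your intermediate inequality $\|\A\mathbf{C}w\|\le\|\mathbf{C}\|_2\|\A w\|$ is also false for general PSD $\mathbf{C}$ (take $w$ along a small eigendirection of $\A$ and $\mathbf{C}$ a rank-one projection tilting it toward the large one). The idea you are missing is how the paper handles a non-commuting $\A$: replace $\B$ by $\mu\bP$ (with $\bP$ the orthogonal projection onto $V$), and for $\v\in V$ compute $\A(\A+\mu\bP)^{-1}\v$ \emph{exactly} by block inversion in a basis adapted to $V$; the lower block cancels identically, leaving $\A(\A+\mu\bP)^{-1}\v=\bigl(\mathbf{S}(\mathbf{S}+\mu\I)^{-1}\widetilde{\v},\ \mathbf{0}\bigr)$ with $\mathbf{S}=\A_{11}-\A_{12}\A_{22}^{-1}\A_{21}$ the Schur complement of $\A$, and then $\lambda_{\max}(\mathbf{S})\le\lambda_{\max}(\A)$ plus monotonicity of $x\mapsto x/(x+\mu)$ gives the factor $\lambda_{\max}(\A)/(\lambda_{\max}(\A)+\mu)$. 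No invariance of $V$ under $\A$, and no commutativity, is needed for that step.

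That said, your instinct that the stated hypothesis is too weak is sound, just misdirected. With only $\|\B\v\|\ge\mu\|\v\|$ on $V$, the conclusion can fail outright even with $\A=\mathbf{C}=\I$: take $V=\mathrm{span}(e_1)$ and $\B=(1,N)^{\top}(1,N)$, so one may take $\mu=\sqrt{1+N^2}$, yet $\|(\I+\B)^{-1}e_1\|=\sqrt{(1+N^2)^2+N^2}/(2+N^2)\to 1$ while the claimed bound $1/(1+\mu)\to 0$. What the paper's first step (dominating $\A\mathbf{C}(\A+\B)^{-1}$ on $V$ by $\|\mathbf{C}\|_2\,\A(\A+\mu\bP)^{-1}$) really relies on is that $\B$ dominates $\mu\bP$ in the PSD order, which does hold in the application, since there $\B=\nabla^2 f(\param_{n,*})$ and $V$ is its range with spectral gap $\mu$. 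So the appropriate strengthening is a hypothesis on $\B$ (e.g.\ $\B\succeq\mu\bP$, equivalently $\B$ vanishing off $V$ with $\B\succeq\mu$ on $V$), not invariance of $V$ under $\A$ or $\mathbf{C}$; with your version the later proofs would not go through, whereas with the $\B\succeq\mu\bP$ reading the Schur-complement argument above supplies what the paper needs.
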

\begin{proof}
    Denote $\bP$ the orthogonal projection onto $V.$ Then when restricted on $V$, 
    \[
    \|\mathbf{C}\|_{2}\A(\A+\mu\bP)^{-1}\succcurlyeq\A\mathbf{C}(\A+\B)^{-1}.
    \]
    So it suffices to show $\|\A(\A+\mu\bP)^{-1}\mathbf{v}\|\leq \frac{\lambda_{\max}(\A)}{\lambda_{\max}(\A)+\mu}\|\mathbf{v}\|.$ Under an orthonormal basis of $\R^d$ whose first $\text{dim}(V)$ vectors span $V$, one can write
    \[
    \A=\begin{bmatrix}
        \A_{11} & \A_{12}\\
        \A_{21} & \A_{22},
    \end{bmatrix}\quad
    \bP=\begin{bmatrix}
        \I_{\text{dim}(V)} & \mathbf{O}\\
        \mathbf{O} & \mathbf{O},
    \end{bmatrix}\quad
    \mathbf{v}=\begin{bmatrix}
        \widetilde{\mathbf{v}} \\
        \mathbf{0}
    \end{bmatrix}.
    \]
    Apply the inversion formula in section 0.7.3 in \cite{horn2012matrix}, under the new orthonormal basis one gets
    \[
    \A(\A+\mu\bP)^{-1}\mathbf{v}=\begin{bmatrix}
        \mathbf{S}(\mathbf{S}+\mu\I_{\text{dim}(V)})^{-1}\widetilde{\mathbf{v}}\\
        \mathbf{0}
    \end{bmatrix},
    \]
    where $\mathbf{S}=\A_{11}-\A_{12}\A_{22}^{-1}\A_{21}$ is the Schur complement of $\A$. Thus,
    \[
    \|\A(\A+\mu\bP)^{-1}\mathbf{v}\|\leq \frac{\lambda_{\max}(\mathbf{S})}{\lambda_{\max}(\mathbf{S})+\mu}\|\widetilde{\mathbf{v}}\|\le \frac{\lambda_{\max}(\A)}{\lambda_{\max}(\A)+\mu}\|\mathbf{v}\|,
    \]
    where we used $\lambda_{\max}(\mathbf{S})\le \lambda_{\max}(\A)$ \cite{horn2012matrix}, $\|\widetilde{\mathbf{v}}\|=\|\mathbf{v}\|$, and the fact that $x/(1+x)$ is increasing for $x\geq 0.$
\end{proof}

Next we show the key Lemma that allows up to find an appropriate reference $\param_{n,*}\in\mathcal{S}$ for any $\param_n$ sufficiently close to $\mathcal{S}.$

\begin{lemma}\label{lem:implications of A3}
Suppose $f$ satisfies \ref{assumption:A1} and \ref{assumption:A2}. If $\|\nabla f(\param_n)\|< \eta$, where $\eta>0$ is defined in \ref{assumption:A2}, then 

    there is $\param_{n,*}\in S$ so that 
    \begin{description}
        \item[i] 
            $\param_{n,*}$ is the most efficient reference local minimum to $\param_n$ in the sense that 
            \begin{align}\label{eq:choice_param_n_*}
                \param_n-\param_{n,*}=\bP_n(\param_n-\param_{n,*}),
            \end{align}
            where the operator $\bP_n$ is defined by
            \begin{align}\label{eq:def_Pn}
                \bP_n &:= \textup{the projection operator onto the range of $\nabla^2f(\param_{n,*})$},
            \end{align}
            (See Figure \ref{fig:flat_landscape} for illustration.)
        \item[ii] 
             For any $\mathbf{v}\in \text{Range}(\nabla^2f(\param_{n,*}))$, $\innp{\mathbf{v},\nabla^2f(\param_{n,*})\mathbf{v}}\geq \mu\|\mathbf{v}\|^2$.
    \end{description}
\end{lemma}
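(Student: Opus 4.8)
\textbf{Proof proposal for Lemma~\ref{lem:implications of A3}.}

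The plan is to construct $\param_{n,*}$ as a point of $\mathcal{S}$ that is closest to $\param_n$, and then verify that the displacement $\param_n-\param_{n,*}$ automatically avoids the kernel of $\nabla^2 f(\param_{n,*})$ because of the local geometry encoded in the $\mu$-QG condition. First I would invoke \ref{assumption:A1}: the gradient condition $\|\nabla f(\param_n)\|<\eta$ together with the $\mu$-QG inequality forces $\param_n$ to lie within a controlled distance of $\mathcal{S}$ (namely $\text{dist}(\param_n,\mathcal{S})^2\le \tfrac{2}{\mu}(f(\param_n)-f^*)$, and the latter is small by smoothness/compactness). Since $\mathcal{S}=\argmin f$ is closed, a nearest point $\param_{n,*}\in\mathcal{S}$ exists; if needed one argues $\mathcal{S}$ is locally a $C^1$ submanifold via the constant rank theorem as mentioned in the text, so the nearest-point projection is well defined in a neighborhood.

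The heart of the argument is part \textbf{i}, the first-order optimality of the nearest-point problem. Minimizing $\tfrac12\|\param_n-\param\|^2$ over $\param\in\mathcal{S}$ at the interior optimum $\param_{n,*}$ gives that $\param_n-\param_{n,*}$ is orthogonal to the tangent space $T_{\param_{n,*}}\mathcal{S}$. The key identification is that $T_{\param_{n,*}}\mathcal{S}=\ker\nabla^2 f(\param_{n,*})$: indeed, along $\mathcal{S}$ the function $f$ is constant and equal to $f^*$, so second-order expansion of $f$ around $\param_{n,*}$ in a tangent direction $\mathbf v$ yields $0 = f(\param_{n,*}+t\mathbf v)-f^* = \tfrac{t^2}{2}\innp{\mathbf v,\nabla^2 f(\param_{n,*})\mathbf v}+o(t^2)$, forcing $\mathbf v\in\ker\nabla^2 f(\param_{n,*})$ (using that $\nabla^2 f(\param_{n,*})\succeq 0$ by convexity, so the quadratic form vanishes only on the kernel); conversely a dimension count ($\dim\mathcal{S}=d-r$ where $r=\text{rank}\,\nabla^2 f$, by the constant rank theorem) gives equality. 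Hence $\param_n-\param_{n,*}\perp\ker\nabla^2 f(\param_{n,*})$, which is exactly $\param_n-\param_{n,*}=\bP_n(\param_n-\param_{n,*})$ for $\bP_n$ the projection onto $\text{Range}(\nabla^2 f(\param_{n,*}))=(\ker\nabla^2 f(\param_{n,*}))^\perp$ (using symmetry of the Hessian).

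For part \textbf{ii}, I would derive the lower bound $\innp{\mathbf v,\nabla^2 f(\param_{n,*})\mathbf v}\ge \mu\|\mathbf v\|^2$ for $\mathbf v\in\text{Range}(\nabla^2 f(\param_{n,*}))$ from $\mu$-QG. The idea: for small $t$, the point $\param_{n,*}+t\mathbf v$ has small gradient (by continuity, since $\nabla f(\param_{n,*})=0$), so $\mu$-QG applies and gives $f(\param_{n,*}+t\mathbf v)-f^*\ge \tfrac{\mu}{2}\text{dist}(\param_{n,*}+t\mathbf v,\mathcal{S})^2$; since $\mathbf v$ is orthogonal to the tangent space of $\mathcal{S}$ at $\param_{n,*}$, the distance to $\mathcal{S}$ is $t\|\mathbf v\|+o(t)$, so the RHS is $\tfrac{\mu}{2}t^2\|\mathbf v\|^2+o(t^2)$; matching with the second-order Taylor expansion $f(\param_{n,*}+t\mathbf v)-f^* = \tfrac{t^2}{2}\innp{\mathbf v,\nabla^2 f(\param_{n,*})\mathbf v}+o(t^2)$ and letting $t\to 0$ gives the claim. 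The main obstacle I anticipate is making the ``distance to $\mathcal{S}$ equals $t\|\mathbf v\|+o(t)$'' step fully rigorous — this requires knowing $\mathcal{S}$ is a smooth submanifold near $\param_{n,*}$ with tangent space $\ker\nabla^2 f(\param_{n,*})$, which is where the constant rank theorem and the $C^2$ hypothesis must be carefully deployed (and possibly why the paper states the assumption rather than proving this as a free lemma); one must also ensure the neighborhood where $\mu$-QG holds is not shrunk away by these local constructions, which should follow from taking $\param_n$ close enough to $\mathcal{S}$ by the gradient-norm hypothesis.
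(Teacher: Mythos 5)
Your overall strategy (nearest-point projection onto $\mathcal{S}$, orthogonality of $\param_n-\param_{n,*}$ to the tangent structure of $\mathcal{S}$, identification of that tangent structure with $\ker\nabla^2 f(\param_{n,*})$, and a QG-based curvature bound on the range) is exactly the structure behind the paper's proof; the paper simply outsources all three facts to Rebjock--Boumal \cite{rebjock2024fast} (their Lemmas 1.4, 1.5 and Corollary 2.17). The gap in your version is the step where you establish $T_{\param_{n,*}}\mathcal{S}=\ker\nabla^2 f(\param_{n,*})$ by a ``dimension count'' via the constant rank theorem. The constant rank theorem needs the Hessian to have locally constant rank in an open neighborhood of $\param_{n,*}$, and nothing in \ref{assumption:A1}--\ref{assumption:A2} gives you that: the lemma's only geometric hypothesis is $\mu$-QG. (The paper's introduction mentions constant rank only as motivation, not as an assumption.) This matters because the inclusion you actually prove, $T_{\param_{n,*}}\mathcal{S}\subseteq\ker\nabla^2 f(\param_{n,*})$, goes the wrong way for part (i): orthogonality of $\param_n-\param_{n,*}$ to the tangent space gives membership in $(T_{\param_{n,*}}\mathcal{S})^{\perp}\supseteq(\ker\nabla^2 f(\param_{n,*}))^{\perp}$, whereas \eqref{eq:choice_param_n_*} requires membership in $(\ker\nabla^2 f(\param_{n,*}))^{\perp}$ itself. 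So the crux is precisely the hard inclusion $\ker\nabla^2 f(\param_{n,*})\subseteq T_{\param_{n,*}}\mathcal{S}$ (equivalently, that $\mathcal{S}$ is locally a manifold of dimension $d-r$), which is the Morse--Bott-type content of the cited results and is left unproved in your sketch; your part (ii) (the ``$\mathrm{dist}=t\|\mathbf{v}\|+o(t)$'' step) leans on the same unproved structure, as you yourself flag.

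The gap is fixable without constant rank, and this is worth knowing: either cite \cite{rebjock2024fast} as the paper does, or argue directly at the level of tangent cones. For $\mathbf{v}\in\ker\nabla^2 f(\param_{n,*})$, Lipschitz continuity of the Hessian gives $f(\param_{n,*}+t\mathbf{v})-f^*\le \frac{L_H}{6}|t|^3\|\mathbf{v}\|^3$, so $\mu$-QG yields $\mathrm{dist}(\param_{n,*}+t\mathbf{v},\mathcal{S})=O(|t|^{3/2})=o(t)$, which places $\mathbf{v}$ in the Bouligand tangent cone of $\mathcal{S}$ at $\param_{n,*}$; combined with your easy Taylor inclusion (tangent cone $\subseteq\ker$), the tangent cone equals the kernel and is a subspace. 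The nearest-point variational inequality then gives orthogonality of $\param_n-\param_{n,*}$ to this cone, i.e.\ \eqref{eq:choice_param_n_*}, and a compactness argument using $\mathbf{v}\perp\ker$ shows $\mathrm{dist}(\param_{n,*}+t\mathbf{v},\mathcal{S})=t\|\mathbf{v}\|+o(t)$ for range vectors, which rescues your proof of (ii). As written, however, the proposal's appeal to the constant rank theorem assumes structure not granted by \ref{assumption:A2}, so the key identification is not justified.
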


\begin{proof}
    Under the assumption \ref{assumption:A2} and that $\|\nabla f(\param_n)\|<\eta$, by Lemma 1.4, 1.5 and Corollary 2.17 of \cite{rebjock2024fast},
    \begin{description}
        \item[(a)]
             The set $\text{proj}_{\mathcal{S}}(\param_n):=\argmin_{\param_*'\in \mathcal{S}}\|\param_{n}-\param_{*}'\|$ is not empty.
        \item[(b)]
            For any $\param_*'\in \text{proj}_{\mathcal{S}}(\param_n),$ $\param_n-\param_{n,*}\in (T_{\param_{n,*}}\mathcal{S})^{\perp}$, where $T_{\param_{n,*}}\mathcal{S}$ is the tangent space of $\mathcal{S}$ at $\param_{n,*}$.
        \item[(c)]
            $\text{Ker}(\nabla^2f(\param_{n,*}))=T_{\param_{n,*}}\mathcal{S}$, and for any $\mathbf{v}\in \text{Range}(\nabla^2f(\param_{n,*})),$ $ \innp{\mathbf{v},\nabla^2f(\param_{n,*})\mathbf{v}}\geq \mu\|\mathbf{v}\|^2.$
    \end{description}
   Thus we can fix one such $\param_*'\in \text{proj}_{\mathcal{S}}(\param_n)$ and call it $\param_{n,*}.$ Then by point (b) and (c),
   \begin{align*}
       \param_n-\param_{n,*}\in (T_{\param_{n,*}}\mathcal{S})^{\perp}=\text{Ker}(\nabla^2f(\param_{n,*}))^{\perp}=\text{Range}(\nabla^2f(\param_{n,*})),
   \end{align*}
   which translates to our first assertion, $\param_n-\param_{n,*}=\bP_n(\param_n-\param_{n,*}).$ And the second part of point (c) is exactly the second assertion.
\end{proof}


\begin{wrapfigure}[10]{r}{0.23\textwidth}
	\centering
	\vspace{-0.4cm}
	\includegraphics[width=1 \linewidth]{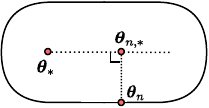}
	\vspace{-0.6cm}
	\caption{\scriptsize Example of flat local minimum $\param_{*}$. Contour represents the level curve of the objective.}
	\label{fig:flat_landscape}
\end{wrapfigure}
A key difficulty of this local convergence analysis comes from the fact that the gradient at $\param_n$ does not lie in the range of the Hessian. We overcome this by choosing a proper reference local minimum $\param_{n,*}$ given by lemma \ref{lem:implications of A3}, then $\nabla f(\param_n)$ 'almost' lie in the range of $\nabla^2f(\param_{n,*})$, where the exact meaning of 'almost' is detailed in the Lem. \ref{lem:P_dom_Q} below.

\begin{lemma} \label{lem:P_dom_Q}
Suppose $f$ satisfies \ref{assumption:A1} and \ref{assumption:A2}. Assume $\|\nabla f(\param_n)\|< \eta$, where $\eta>0$ is defined in \ref{assumption:A2}. Let $\param_{n,*}$ and $\bP_{n}$ be the ones given by Lem. \ref{lem:implications of A3} (see \eqref{eq:choice_param_n_*}), and let $\bQ_{n}$ be the projection operator onto $\text{Ker}(\nabla^2f(\param_{n,*})).$ Then 
\begin{align*} 
    \|\bP_{n}\nabla f(\param_n)\|&\geq \mu \|\param_n-\param_{n,*}\|-\frac{L_H}{2}\|\param_n-\param_{n,*}\|^2, \,\, \|\bQ_{n}\nabla f(\param_n)\|\leq \frac{L_H}{2}\|\param_n-\param_{n,*}\|^2.
\end{align*}
Furthermore, let $A$ be the number so that $\|\nabla f(\param_n)\|= \frac{(A\mu)^2}{L_H}$. If $A<\sqrt{2}$, then we have
\begin{align}
    \|\bQ_{n} \nabla f(\param_n)\| &\leq \frac{L_H}{2\mu^2}\left(1-\frac{A^2}{2}\right)^{-2}\|\bP_{n} \nabla f(\param_n) \|^2. \label{eq:P_dom_Q}
\end{align}
\end{lemma}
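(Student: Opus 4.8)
\textbf{Proof plan for Lemma \ref{lem:P_dom_Q}.}

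The plan is to exploit the Lipschitz continuity of the Hessian together with the fact that $\nabla f(\param_{n,*}) = 0$ to linearize the gradient at $\param_n$ around the reference minimizer $\param_{n,*}$, and then split the resulting identity using the orthogonal projections $\bP_n$ and $\bQ_n$. Concretely, I would start from the integral form of the fundamental theorem of calculus,
\[
\nabla f(\param_n) = \nabla f(\param_n) - \nabla f(\param_{n,*}) = \int_0^1 \nabla^2 f(\param_{n,*} + t(\param_n - \param_{n,*}))(\param_n - \param_{n,*})\, dt,
\]
and rewrite this as $\nabla f(\param_n) = \nabla^2 f(\param_{n,*})(\param_n - \param_{n,*}) + \mathbf{e}_n$, where the remainder $\mathbf{e}_n$ satisfies $\|\mathbf{e}_n\| \le \frac{L_H}{2}\|\param_n - \param_{n,*}\|^2$ by \ref{assumption:A1}. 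Write $\Delta_n := \param_n - \param_{n,*}$ for brevity.

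The next step is to project. Since $\Delta_n = \bP_n \Delta_n$ by the choice of $\param_{n,*}$ in \eqref{eq:choice_param_n_*}, and since $\nabla^2 f(\param_{n,*})$ maps into (and is invertible on) $\text{Range}(\nabla^2 f(\param_{n,*}))$, the leading term $\nabla^2 f(\param_{n,*})\Delta_n$ lies entirely in the range of $\bP_n$. Hence applying $\bQ_n$ kills it and gives $\bQ_n \nabla f(\param_n) = \bQ_n \mathbf{e}_n$, so $\|\bQ_n \nabla f(\param_n)\| \le \|\mathbf{e}_n\| \le \frac{L_H}{2}\|\Delta_n\|^2$, which is the second stated bound. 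For the first bound, apply $\bP_n$ to get $\bP_n \nabla f(\param_n) = \nabla^2 f(\param_{n,*})\Delta_n + \bP_n \mathbf{e}_n$; by part (ii) of Lemma \ref{lem:implications of A3} we have $\|\nabla^2 f(\param_{n,*})\Delta_n\| \ge \mu\|\Delta_n\|$ (using that $\Delta_n$ lies in the range), so by the reverse triangle inequality $\|\bP_n \nabla f(\param_n)\| \ge \mu\|\Delta_n\| - \frac{L_H}{2}\|\Delta_n\|^2$.

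The final part is the quotient bound \eqref{eq:P_dom_Q}. Here I would use $\mu$-QG together with $\mu$-EB (available by Remark \ref{rmk:PL}, or directly from the structure) to relate $\|\Delta_n\| = \text{dist}(\param_n,\mathcal{S})$ to $\|\nabla f(\param_n)\|$ — but more economically, the hypothesis directly parametrizes $\|\nabla f(\param_n)\| = (A\mu)^2/L_H$, so combining with $\|\nabla f(\param_n)\| \ge \|\bP_n\nabla f(\param_n)\| \ge \mu\|\Delta_n\| - \frac{L_H}{2}\|\Delta_n\|^2 = \mu\|\Delta_n\|(1 - \frac{L_H}{2\mu}\|\Delta_n\|)$ gives, after rearranging the quadratic inequality in $\|\Delta_n\|$, a bound of the form $\frac{L_H}{2\mu}\|\Delta_n\| \le 1 - \sqrt{1 - A^2/2} \le$ something, and crucially $1 - \frac{L_H}{2\mu}\|\Delta_n\| \ge \sqrt{1 - A^2/2} \ge 1 - A^2/2$ when $A < \sqrt 2$. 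Then $\mu\|\Delta_n\| \le \|\bP_n\nabla f(\param_n)\| / (1 - \frac{L_H}{2\mu}\|\Delta_n\|) \le \|\bP_n \nabla f(\param_n)\| (1 - A^2/2)^{-1}$, hence $\|\Delta_n\|^2 \le \frac{1}{\mu^2}(1-A^2/2)^{-2}\|\bP_n\nabla f(\param_n)\|^2$, and feeding this into $\|\bQ_n\nabla f(\param_n)\| \le \frac{L_H}{2}\|\Delta_n\|^2$ yields \eqref{eq:P_dom_Q}. The main obstacle I anticipate is being careful with the quadratic-inequality manipulation: one must verify that the relevant root of $\mu t - \frac{L_H}{2}t^2 = \|\nabla f(\param_n)\|$ is the small one (so that $1 - \frac{L_H}{2\mu}\|\Delta_n\|$ stays positive and bounded below), which is exactly where the condition $\|\nabla f(\param_n)\| < \eta$ (keeping $\param_n$ in the QG neighborhood and $\|\Delta_n\|$ small) and $A < \sqrt 2$ are used; a parallel subtlety is checking that $\|\bP_n\nabla f(\param_n)\|$ is indeed nonnegative in the bound so the division is legitimate.
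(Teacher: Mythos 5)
Your derivation of the two displayed bounds is correct and is essentially the paper's argument: write $\nabla f(\param_n)=\nabla^2 f(\param_{n,*})\Delta_n+\mathbf{e}_n$ with $\|\mathbf{e}_n\|\le \frac{L_H}{2}\|\Delta_n\|^2$ via the integral form and Hessian Lipschitzness, kill the leading term with $\bQ_n$, and use \eqref{eq:choice_param_n_*} together with part (ii) of Lemma \ref{lem:implications of A3} and the reverse triangle inequality for the $\bP_n$ part.

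The final step \eqref{eq:P_dom_Q}, however, has a genuine gap as proposed. Your ``more economical'' route starts from $\|\nabla f(\param_n)\|\ge\|\bP_n\nabla f(\param_n)\|\ge \mu\|\Delta_n\|-\frac{L_H}{2}\|\Delta_n\|^2$ and tries to invert this quadratic inequality in $\|\Delta_n\|$. That cannot work on its own: the downward parabola $t\mapsto \mu t-\frac{L_H}{2}t^2$ is below any given level both for small $t$ \emph{and} for large $t$, so the inequality is consistent with $\|\Delta_n\|$ being the large root, in which case $1-\frac{L_H}{2\mu}\|\Delta_n\|$ is not positive and your division fails. Worse, for $A\in(1/\sqrt{2},\sqrt{2})$ the inequality is vacuous (the maximum of $\mu t-\frac{L_H}{2}t^2$ is $\mu^2/(2L_H)\le A^2\mu^2/L_H$), so it yields no information about $\|\Delta_n\|$ at all even though the lemma is claimed for all $A<\sqrt{2}$; and your square roots $\sqrt{1-2A^2}$-type quantities are not even real there. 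The hypotheses you point to do not rescue this: $\|\nabla f(\param_n)\|<\eta$ only activates \ref{assumption:A2}, it does not by itself make $\|\Delta_n\|$ small. The missing ingredient is exactly the error-bound inequality you set aside as uneconomical: an a priori bound $\|\Delta_n\|=\mathrm{dist}(\param_n,\mathcal{S})\le \|\nabla f(\param_n)\|/\mu$ (the paper invokes the essential equivalence of QG/EB/PL from Remark \ref{rmk:PL} and \cite{rebjock2024fast}; a cruder version with constant $2/\mu$ also follows from QG plus convexity, at the price of a worse constant in \eqref{eq:P_dom_Q}). With it, no quadratic inversion is needed: $\frac{L_H}{2\mu}\|\Delta_n\|\le \frac{A^2}{2}$ directly, hence $\|\bP_n\nabla f(\param_n)\|\ge (1-\frac{A^2}{2})\mu\|\Delta_n\|$, and squaring and substituting into $\|\bQ_n\nabla f(\param_n)\|\le \frac{L_H}{2}\|\Delta_n\|^2$ gives \eqref{eq:P_dom_Q}. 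So you should promote the EB step from an aside to the central step of the final part and drop the root-selection argument.
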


\begin{proof}
    By Taylor's theorem,
   \begin{align*}
       \nabla f(\param_n)&=\nabla f(\param_n) -\nabla f(\param_{n,*})=\int_{0}^{1}\nabla^2f(\param_{n,*}+t(\param_n-\param_{n,*}))dt\  (\param_n-\param_{n,*}).
   \end{align*}
   Denote $\bR(n,t):=\nabla^2f(\param_{n,*}+t(\param_n-\param_{n,*}))-\nabla^2f(\param_{n,*})$. 
   Its operator norm satisfies $\|\bR(n,t)\|_{op}\leq tL_H\|\param_n-\param_{n,*}\|$ by Lipschitz-continuity of the Hessian. Then we have
   \begin{align*}
       \bP_{n}\nabla f(\param_n)&=\bP_{n}\nabla^2f(\param_{n,*})(\param_n-\param_{n,*})+\bP_{n}\int_{0}^{1}\bR(n,t)dt \ (\param_n-\param_{n,*})\\
       &=\nabla^2 f(\param_{n,*})\bP_{n}(\param_n-\param_{n,*})+\bP_{n}\int_{0}^{1}\bR(n,t)dt \ (\param_n-\param_{n,*})\\
        &=\nabla^2 f(\param_{n,*})(\param_n-\param_{n,*})+\bP_{n}\int_{0}^{1}\bR(n,t)dt \ (\param_n-\param_{n,*})
   \end{align*}
   
   where the last line holds from the first part of Lemma \ref{lem:implications of A3}, and the previous line holds because $\nabla^2f(\param_{n,*})$ and $\bP_{n}$ commutes since they share the same frame of orthogonal eigenvectors. Now apply the norm on both sides and use the second part of Lemma \ref{lem:implications of A3}, we get the estimate
   \begin{align}\label{eq:proj_grad_norm}
        \|\bP_{n}\nabla f(\param_n)\|&\geq \mu \|\param_n-\param_{n,*}\|-\frac{L_H}{2}\|\param_n-\param_{n,*}\|^2.
   \end{align}
   Similarly we have
   \begin{gather}\label{eq:ortho_grad_norm}
   \begin{aligned}
       \|\bQ_{n}\nabla f(\param_n)\|&=\left\|\nabla^2 f(\param_{n,*})\bQ_{n}(\param_n-\param_{n,*})+\bQ_{n}\int_{0}^{1}\bR(n,t)dt \ (\param_n-\param_{n,*})\right\|\\
       &= \left\|\bQ_{n}\int_{0}^{1}\bR(n,t)dt \ (\param_n-\param_{n,*}) \right\| \leq \frac{L_H}{2}\|\param_n-\param_{n,*}\|^2
   \end{aligned}
   \end{gather}
   where the second line follows because $\text{Range}(\bQ_{n})=\text{Ker}(\nabla^2f(\param_{n,*})).$
   Recall that $\|\nabla f(\param_n)\|= \frac{(A\mu)^2}{L_H}$ with $A<\sqrt{2}.$ By \cite{rebjock2024fast}, $\mu$-PL implies $\mu$-EB, we have 
   \[
   \|\param_n-\param_{n,*}\|\leq \frac{\|\nabla f(\param_n)\|}{\mu}=\frac{A^2\mu}{L_H}\leq \frac{2\mu}{L_H}.
   \]
   Then using \eqref{eq:proj_grad_norm}, one can lower bound $ \|\bP_{n}\nabla f(\param_n)\|$ only using a linear term as
   \[
   \|\bP_{n}\nabla f(\param_n)\|\geq \left(1-\frac{A^2}{2}\right)\mu\|\param_n-\param_{n,*}\|
   \]
   Plug this into \eqref{eq:ortho_grad_norm} we have $\|\bQ_{n}\nabla f(\param_n)\|\leq \frac{L_H}{2\mu^2}\left(1-\frac{A^2}{2}\right)^{-2}\|\bP_{n}\nabla f(\param_n)\|^2$.
\end{proof}

Another difficulty of this local convergence analysis comes from the fact that if $\param_n$ is close to some $\mathcal{S}$ that has rank deficiency, the preconditioning matrix $\hat{\B}_n+\lambda_n\mathbf{I}$ can be very ill-conditioned since its smallest eigenvalue could approach zero.
In the next lemma, we show that despite that, the size of $\|(\hat{\B}_n+\lambda_n\mathbf{I})^{-1}\nabla f(\param_n)\|$ is well controlled due to the special geometrical alignment of $\nabla f(\param_n)$ shown in lemma \ref{lem:P_dom_Q}.

\begin{lemma}\label{lem:bound_beta}
Suppose $f$ satisfies \ref{assumption:A1} and \ref{assumption:A2}. If $\|\nabla f(\param_n)\|< \eta$, where $\eta>0$ is defined in \ref{assumption:A2}. Let $A$ be the number so that $\|\nabla f(\param_n)\|=\frac{(A\mu)^2}{L_H}$. If further $A<0.2$, then almost surely
\begin{align}\label{eq:bounded_beta_n}
\|(\hat{\B}_n+\lambda_n\mathbf{I})^{-1}\nabla f(\param_n)\|\leq \frac{\sqrt{2}}{\mu}\|\nabla f(\param_n)\|.
\end{align}
\end{lemma}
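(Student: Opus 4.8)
The plan is to decompose the gradient $\nabla f(\param_n)$ into its components $\bP_n\nabla f(\param_n)$ and $\bQ_n\nabla f(\param_n)$ along the range and kernel of $\nabla^2 f(\param_{n,*})$, respectively, using the reference minimizer $\param_{n,*}$ supplied by Lemma \ref{lem:implications of A3}. The point is that $(\hat{\B}_n+\lambda_n\I)^{-1}$ acts benignly on the range component (where the Hessian has curvature at least $\mu$, by part (ii) of Lemma \ref{lem:implications of A3}, and hence $\hat{\B}_n+\lambda_n\I \succeq \mu$ on a complementary subspace once we account for the overestimation), but could in principle blow up on the kernel component, since $\lambda_n=\sqrt{L_H\|\nabla f(\param_n)\|}$ may be tiny when $\|\nabla f(\param_n)\|$ is small. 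The saving grace is Lemma \ref{lem:P_dom_Q}: the kernel component $\|\bQ_n\nabla f(\param_n)\|$ is quadratically small relative to $\|\bP_n\nabla f(\param_n)\|$, so even after amplification by the reciprocal of the smallest eigenvalue of $\hat{\B}_n+\lambda_n\I$, it stays under control.

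First I would write $\param_n-\param_{n,*}=\bP_n(\param_n-\param_{n,*})$ (from \eqref{eq:choice_param_n_*}) and recall from Lemma \ref{lem:P_dom_Q} that $A<0.2<\sqrt 2$, so \eqref{eq:P_dom_Q} applies and gives $\|\bQ_n\nabla f(\param_n)\|\le \frac{L_H}{2\mu^2}(1-A^2/2)^{-2}\|\bP_n\nabla f(\param_n)\|^2$, while also $\|\bP_n\nabla f(\param_n)\|\le \|\nabla f(\param_n)\| = (A\mu)^2/L_H$. For the range part, I would bound $\|(\hat{\B}_n+\lambda_n\I)^{-1}\bP_n\nabla f(\param_n)\|$ by invoking Lemma \ref{lem:ABCv} with $\A=\hat{\B}_n+\lambda_n\I$ restricted appropriately, $\B$ playing the role of the curvature lower bound $\mu$ on the subspace $V=\mathrm{Range}(\nabla^2 f(\param_{n,*}))$ — more precisely, one uses that $\hat{\B}_n\succeq \nabla^2 f(\param_n)$ and that $\nabla^2 f(\param_n)$ is close to $\nabla^2 f(\param_{n,*})$ (Hessian Lipschitz, with $\|\param_n-\param_{n,*}\|\le A^2\mu/L_H$ small), so that $\langle \mathbf v,(\hat{\B}_n+\lambda_n\I)\mathbf v\rangle \ge (\mu - \tfrac{L_H}{}\|\param_n-\param_{n,*}\|)\|\mathbf v\|^2 \ge (1-A^2)\mu\|\mathbf v\|^2$ for $\mathbf v\in V$. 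This yields a bound of roughly $\frac{1}{(1-A^2)\mu}\|\bP_n\nabla f(\param_n)\|$ on the range contribution. For the kernel part, I would simply bound $\|(\hat{\B}_n+\lambda_n\I)^{-1}\bQ_n\nabla f(\param_n)\|\le \frac{1}{\sigma_{\min}(\hat{\B}_n)+\lambda_n}\|\bQ_n\nabla f(\param_n)\| \le \frac{1}{\lambda_n}\|\bQ_n\nabla f(\param_n)\|$, then substitute $\lambda_n=\sqrt{L_H\|\nabla f(\param_n)\|}=A\mu$ (since $\|\nabla f(\param_n)\| = (A\mu)^2/L_H$) and the quadratic bound on $\|\bQ_n\nabla f(\param_n)\|$ from \eqref{eq:P_dom_Q}, getting something like $\frac{1}{A\mu}\cdot\frac{L_H}{2\mu^2}(1-A^2/2)^{-2}\|\bP_n\nabla f(\param_n)\|^2 \le \frac{1}{A\mu}\cdot\frac{L_H}{2\mu^2}(1-A^2/2)^{-2}\cdot \frac{A^2\mu^2}{L_H}\|\bP_n\nabla f(\param_n)\| = \frac{A}{2\mu}(1-A^2/2)^{-2}\|\bP_n\nabla f(\param_n)\|$.

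Finally I would add the two contributions, bound $\|\bP_n\nabla f(\param_n)\|\le\|\nabla f(\param_n)\|$, and check that with $A<0.2$ the combined constant $\frac{1}{1-A^2}+\frac{A}{2}(1-A^2/2)^{-2}$ is at most $\sqrt 2\approx 1.414$ — indeed at $A=0.2$ this is roughly $1.0417+0.103\approx 1.14<\sqrt 2$, leaving comfortable room. The one subtlety to handle carefully is the triangle-inequality split: $(\hat{\B}_n+\lambda_n\I)^{-1}$ does not commute with $\bP_n$ or $\bQ_n$ in general (the Hessian is evaluated at $\param_n$, not $\param_{n,*}$), so writing $(\hat{\B}_n+\lambda_n\I)^{-1}\nabla f(\param_n) = (\hat{\B}_n+\lambda_n\I)^{-1}\bP_n\nabla f(\param_n) + (\hat{\B}_n+\lambda_n\I)^{-1}\bQ_n\nabla f(\param_n)$ is fine, but the bound on the first term cannot merely use "curvature $\mu$ on the range of $\nabla^2 f(\param_{n,*})$" directly on $\hat{\B}_n$; it must route through the Hessian perturbation estimate and an application of Lemma \ref{lem:ABCv} (with $\mathbf C=\I$) to legitimately transfer the curvature lower bound from $\param_{n,*}$ to the preconditioner at $\param_n$. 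I expect reconciling these two evaluation points — showing $\hat{\B}_n+\lambda_n\I$ inherits an effective curvature $\gtrsim \mu$ on the relevant subspace despite being built from $\nabla^2 f(\param_n)$ — to be the main technical obstacle; everything else is arithmetic with the constant $A<0.2$.
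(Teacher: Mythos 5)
Your overall skeleton matches the paper's proof: choose the reference minimizer $\param_{n,*}$ from Lemma \ref{lem:implications of A3}, split $\nabla f(\param_n)=\bP_n\nabla f(\param_n)+\bQ_n\nabla f(\param_n)$, control the kernel part by \eqref{eq:P_dom_Q} divided by $\lambda_n=A\mu$ (your resulting bound $\tfrac{A}{2\mu}(1-A^2/2)^{-2}\|\bP_n\nabla f(\param_n)\|$ is exactly the paper's \eqref{eq:upper_beta_3}), and finish with arithmetic in $A<0.2$. The gap is in the range term, which is the heart of the lemma. The inference ``$\langle \mathbf{v},(\hat{\B}_n+\lambda_n\I)\mathbf{v}\rangle\ge(1-A^2)\mu\|\mathbf{v}\|^2$ for $\mathbf{v}\in V$, hence $\|(\hat{\B}_n+\lambda_n\I)^{-1}\bP_n\nabla f(\param_n)\|\lesssim\frac{1}{(1-A^2)\mu}\|\bP_n\nabla f(\param_n)\|$'' is not valid: a quadratic-form lower bound on a subspace does not control the inverse applied to vectors of that subspace, because $(\hat{\B}_n+\lambda_n\I)^{-1}\bP_n\nabla f(\param_n)$ need not stay in $V$, and globally $\lambda_{\min}(\hat{\B}_n+\lambda_n\I)$ is only guaranteed to be $\ge\lambda_n=A\mu$. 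The danger is the off-diagonal block of $\bE_n$ coupling $V$ to $\ker\nabla^2 f(\param_{n,*})$: for instance, with $\nabla^2 f(\param_{n,*})=\mathrm{diag}(\mu,0)$ and the admissible PSD error $\bE_n=\bigl(\begin{smallmatrix}\mu & 0.9\sqrt{\mu\lambda_n}\\ 0.9\sqrt{\mu\lambda_n} & \lambda_n\end{smallmatrix}\bigr)$, the vector $(\hat{\B}_n+\lambda_n\I)^{-1}\mathbf{e}_1$ acquires a component in $\ker\nabla^2 f(\param_{n,*})$ of order $1/\sqrt{\mu\lambda_n}=1/(\mu\sqrt{A})$, far exceeding $\sqrt{2}/\mu$ for small $A$, even though the quadratic form on $V$ is at least $2\mu$. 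So the step you yourself flagged as the main obstacle is indeed where the proposal breaks.

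Your proposed repair via Lemma \ref{lem:ABCv} with $\mathbf{C}=\I$ does not close this gap either: that lemma bounds $\|\A\mathbf{C}(\A+\B)^{-1}\mathbf{v}\|$, i.e.\ the inverse \emph{pre-multiplied by $\A$ on the left}, and it is precisely this left factor that annihilates the leaked component orthogonal to $V$ (see the block computation in its proof); this is why the paper invokes Lemma \ref{lem:ABCv} only in Theorem \ref{thm:local_unified}, where the quantity of interest is $\D_n(\hat{\B}_n+\lambda_n\I)^{-1}\bP_n\nabla f(\param_n)$. Extracting the bare $\|(\A+\B)^{-1}\mathbf{v}\|$ from that lemma costs a factor $1/\lambda_{\min}(\A)\sim 1/\lambda_n$, which destroys the $O(1/\mu)$ bound whenever $\eps_n$ is not small compared with $\lambda_n$. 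The paper's proof of this lemma proceeds differently: it re-centers the operator at $\param_{n,*}$, writing $(\hat{\B}_n+\lambda_n\I)^{-1}=\bigl(\I-(\nabla^2 f(\param_{n,*})+\D_n)^{-1}\bR_n\bigr)^{-1}(\nabla^2 f(\param_{n,*})+\D_n)^{-1}$ with $\D_n=\bE_n+\lambda_n\I$ and $\bR_n$ the Hessian difference between $\param_n$ and $\param_{n,*}$ (see \eqref{eq:lem3_2}); the Neumann factor is bounded by $\frac{1}{1-A}$ using $\|\bR_n\|_2\le L_H\|\param_n-\param_{n,*}\|\le A^2\mu=A\lambda_n\le A\,\lambda_{\min}(\D_n)$, and only then is the $\mu$-curvature applied to $(\nabla^2 f(\param_{n,*})+\D_n)^{-1}\bP_n\nabla f(\param_n)$, exploiting that $\bP_n$ is a spectral projector of $\nabla^2 f(\param_{n,*})$, so the non-degenerate block is aligned with the vector being inverted — an alignment your direct argument at $\param_n$ never establishes. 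To complete your proof you would need this resolvent expansion around the reference minimizer (or some equivalent device that controls the $V$-to-$V^\perp$ leakage induced by $\bE_n$ at the small scale $\lambda_n$); the kernel-part estimate and the final constant check are fine as written.
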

\begin{proof}
    Let $\param_{n,*}$ be the reference local minimum found by lemma \ref{lem:implications of A3}. As in lemma \ref{lem:P_dom_Q}, we denote
   \begin{align*}
   \bR_n:=\nabla^2f(\param_{n})-\nabla^2f(\param_{n,*}), \qquad  \D_n:=\hat{\B}_n-\nabla^2f(\param_n)=\bE_n+\lambda_n\mathbf{I}
   \end{align*}
   Notice we can rewrite $\hat{\B}_n+\lambda_n\mathbf{I}=\nabla^2f(\param_{n,*})+\D_n-\bR_n$, we can write
       \begin{align}\label{eq:lem3_2}
       &(\hat{\B}_n+\lambda_n\mathbf{I})^{-1}\nabla f(\param_n)\nonumber\\
       &=(\nabla^2f(\param_n)+\D_n)^{-1}\nabla f(\param_n)\nonumber\\
       &= \left(\nabla^2f(\param_{n,*})+\D_n-\bR_n\right)^{-1}(\bP_{n}\nabla f(\param_n)+\bQ_{n}\nabla f(\param_n))\nonumber\\
       &=\left(\mathbf{I}-\left(\nabla^2f(\param_{n,*})+\D_n\right)^{-1}\bR_n\right)^{-1}\left(\nabla^2f(\param_{n,*})+\D_n\right)^{-1}(\bP_{n}\nabla f(\param_n)+\bQ_{n}\nabla f(\param_n))\nonumber\\
       &=\left(\mathbf{I}-\left(\nabla^2f(\param_{n,*})+\D_n\right)^{-1}\bR_n\right)^{-1}\left(\nabla^2f(\param_{n,*})+\D_n\right)^{-1}\bP_{n}\nabla f(\param_n)\\
       &\quad\quad +\left(\mathbf{I}-\left(\nabla^2f(\param_{n,*})+\D_n\right)^{-1}\bR_n\right)^{-1}\left(\nabla^2f(\param_{n,*})+\D_n\right)^{-1}\bQ_{n}\nabla f(\param_n)\nonumber
   \end{align}
  Now we bound the different parts of \eqref{eq:lem3_2} individually.
  First part we have
  \begin{align*}
      &\left\|\left(\mathbf{I}-\left(\nabla^2f(\param_{n,*})+\D_n\right)^{-1}\bR_n\right)^{-1}\right\|_{2}\\
      &\leq \left(1-\frac{\|\bR_n\|_{2}}{\lambda_{\min}(\D_n)}\right)^{-1} \leq \left(1-\frac{L_H\|\param_{n}-\param_{n,*}\|}{\lambda_{\min}(\D_n)}\right)^{-1} \leq \left(1-\frac{L_H\|\param_{n}-\param_{n,*}\|}{\sqrt{L_H\|\nabla f(\param_n)\|}}\right)^{-1},
  \end{align*}
  where the last line follows from the fact that $\D_n=\bE_n+\lambda_n\mathbf{I}\succcurlyeq \sqrt{L_H\|\nabla f(\param_n)\|}\mathbf{I}$.
  Recall that $\param_{n,*}$ is $\mu$-PL, that is $\|\nabla f(\param_n)\|^2\geq 2\mu(f(\param_n)-f(\param_{n,*})).$ By \cite{otto2000generalization}, $\param_{n,*}$ is also $\mu$-QG, that is
$f(\param_n)-f(\param_{n,*})\geq \frac{\mu}{2}\|\param_n-\param_{n,*}\|^2. $
Combine the two inequalities above we have
    \begin{align*}
        \|\nabla f(\param_n)\|&\geq \mu\|\param_n-\param_{n,*}\|.
    \end{align*}
Continuing the computation we have,
    \begin{align*}
         &\hspace{-1cm}\left\|\left(\mathbf{I}-\left(\nabla^2f(\param_{n,*})+\D_n\right)^{-1}\bR_n\right)^{-1}\right\|_{2} \\
         &\leq\left(1-\frac{L_H\|\param_n-\param_{n,*}\|}{\sqrt{L_H\|\nabla f(\param_n)\|}}\right)^{-1} \leq \left(1-\frac{\sqrt{L_H\|\nabla f(\param_n)\|}}{\mu}\right)^{-1}\\
          &\leq \left(1-\frac{\sqrt{L_H\|\nabla f(\param_n)\|}}{\mu}\right)^{-1} =\frac{1}{1-A},
    \end{align*}
    where the last line follows $\|\nabla f(\param_n)\|=(A\mu)^2/L_H.$
To summarize, the first part of \eqref{eq:lem3_2} has upper bound
\begin{align}\label{eq:upper_beta_1}
    \left\|\left(\mathbf{I}-\left(\nabla^2f(\param_{n,*})+\D_n\right)^{-1}\bR_n\right)^{-1}\right\|_{2}&\leq \frac{1}{1-A}.
\end{align}
Using lemma \ref{lem:implications of A3}, the second part of \eqref{eq:lem3_2} can be easily bounded by
\begin{align}\label{eq:upper_beta_2}
   \|(\nabla^2f(\param_{n,*})+\D_n)^{-1}\bP_n\nabla f(\param_n)\|\leq \frac{1}{\mu+\lambda_{\min}(\D_n)}\|\bP_n\nabla f(\param_n)\|\leq \frac{1}{\mu}\|\bP_n\nabla f(\param_n)\|.
\end{align}
The third part is the key part that uses the lemma \ref{lem:P_dom_Q}, according to which,
\begin{align*}
\left\|\left(\nabla^2f(\param_{n,*})+\D_n\right)^{-1}\bQ_{n}\nabla f(\param_n)\right\|_{2}\leq \frac{L_H}{2\mu^2\lambda_{\min}(\D_n)}\left(1-\frac{A^2}{2}\right)^{-2}\|\bP_{n}\nabla f(\param_n)\|^2.
\end{align*}
Using $\lambda_{\min}(\D_n)\geq \sqrt{L_H\|\nabla f(\param_n)\|}$ and $\|\nabla f(\param_n)\|\leq (A\mu)^2/L_H$, we get
\begin{align}\label{eq:upper_beta_3}
\left\|\left(\nabla^2f(\param_{n,*})+\D_n\right)^{-1}\bQ_{n}\nabla f(\param_n)\right\|_{2}\leq \frac{A}{2\mu}\left(1-\frac{A^2}{2}\right)^{-2}\|\bP_{n}\nabla f(\param_n)\|
\end{align}
Then combine \eqref{eq:upper_beta_1}, \eqref{eq:upper_beta_2}, and \eqref{eq:upper_beta_3} into \eqref{eq:lem3_2}, we have
\begin{align*}
    \|(\hat{\B}_n+\lambda_n\mathbf{I})^{-1}\nabla f(\param_n)\|&\leq \frac{1}{1-A}\left(1+\frac{A}{2}\left(1-\frac{A^2}{2}\right)^{-2}\right)\frac{1}{\mu}\|\nabla f(\param_n)\|\\
    &\leq \frac{\sqrt{2}}{\mu}\|\nabla f(\param_n)\|,
\end{align*}
where the last line follows from the choice of $A< 0.2.$
\end{proof}

We are now ready to prove our local convergence result stated in Theorem \ref{thm:local_unified}. 

\begin{proof}[\textbf{Proof of Theorem} \ref{thm:local_unified}]
    Recall the next iterate is given by $ \param_{n+1}=\param_n-(\hat{\B}_n+\lambda_n\mathbf{I})^{-1}\nabla f(\param_n),$ and similar to the previous Lemma we denote
    \[
    \D_n:=\hat{\B}_n-\nabla^2f(\param_n)=\bE_n+\lambda_n\mathbf{I}.
    \]
    For notational simplicity, we also denote 
    \[
    \boldsymbol{\delta}_n:=(\hat{\B}_n+\lambda_n\mathbf{I})^{-1}\nabla f(\param_n)
    \]
    Then by standard argument using line integral, 
    we can express the gradient at $\param_{n+1}$ as 
    \begin{align}\label{eq:next_grad}
        \nabla f(\param_{n+1})&=\nabla f(\param_n-\boldsymbol{\delta}_n)\nonumber\\
        &=\nabla f(\param_n)-\int_{0}^{1}\nabla^2f(\param_n-t\boldsymbol{\delta}_n)dt\ \boldsymbol{\delta}_n\nonumber\\
        &=\B_n\boldsymbol{\delta}_n-\int_{0}^{1}\nabla^2f(\param_n-t\boldsymbol{\delta}_n)dt\ \boldsymbol{\delta}_n\nonumber\\
        &= \nabla^2f(\param_n)\boldsymbol{\delta}_n+\D_n\boldsymbol{\delta}_n-\int_{0}^{1}\nabla^2f(\param_n-t\boldsymbol{\delta}_n)dt\ \boldsymbol{\delta}_n\nonumber\\
        &=\D_n\boldsymbol{\delta}_n
        -\int_{0}^{1}\left[\nabla^2f(\param_n)-\nabla^2f(\param_n-t\boldsymbol{\delta}_n)\right]dt\ \boldsymbol{\delta}_n.
    \end{align}
    For the quadratic integral term in \eqref{eq:next_grad}, using $L_H$-Lipschtz continuity of the Hessian, we can bound it's norm 
    \begin{align*}
    \left\|\int_{0}^{1}\left[\nabla^2f(\param_n)-\nabla^2f(\param_n-t\boldsymbol{\delta}_n)\right]dt\ \boldsymbol{\delta}_n\right\| \leq \frac{L_H}{2}\|\boldsymbol{\delta}_n\|^2 \overset{\eqref{eq:bounded_beta_n}}{\leq} \frac{L_H}{\mu^2}\|\nabla f(\param_n)\|^2.
    \end{align*}
    For the linear term in \eqref{eq:next_grad}, we decompose the gradient again
    \begin{align}\label{eq:DB}
\D_n\boldsymbol{\delta}_n&=\D_n(\hat{\B}_n+\lambda_n\mathbf{I})^{-1}\nabla f(\param_{n})\nonumber\\
&=\D_n(\hat{\B}_n+\lambda_n\mathbf{I})^{-1}\bP_n\nabla f(\param_{n})+
\D_n(\hat{\B}_n+\lambda_n\mathbf{I})^{-1}\bQ_n\nabla f(\param_n)
    \end{align}
    Denote $A=\sqrt{L_H\|\nabla f(\param_n)\|/\mu}$, then by \eqref{eq:P_dom_Q}
    \begin{align}\label{eq:DBQ}
    \|\D_n(\hat{\B}_n+\lambda_n\mathbf{I})^{-1}\bQ_n\nabla f(\param_{n})\|\leq \frac{L_H}{2\mu^2}\left(1-\frac{A^2}{2}\right)^{-2}\|\nabla f(\param_n)\|^2\leq \frac{L_H}{\mu^2}\|\nabla f(\param_n)\|^2.,
    \end{align}
    where the last line follows from the assumption that $A\leq \gamma_1<0.2$.
    Recall \eqref{eq:lem3_2}, we have
    \begin{align*}
    &\hspace{0.2in}\D_n(\hat{\B}_n+\lambda_n\mathbf{I})^{-1}\bP_n\nabla f(\param_{n})\\
    &=\D_n\left(\mathbf{I}-\left(\nabla^2f(\param_{n,*})+\D_n\right)^{-1}\bR_n\right)^{-1}\left(\nabla^2f(\param_{n,*})+\D_n\right)^{-1}\bP_{n}\nabla f(\param_n)
    \end{align*}
    By Lemma \ref{lem:ABCv} and that $\|\D_n\|_2\leq \|\bE_n\|_2+\lambda_n$, together with the operator norm bound \eqref{eq:upper_beta_1}, we have
    \begin{align}\label{eq:DBP}
    \|\D_n(\hat{\B}_n+\lambda_n\mathbf{I})^{-1}\bP_n\nabla f(\param_{n})\|\leq \frac{1}{1-A}\frac{\|\bE_n\|_2+\lambda_n}{\mu+\|\bE_n\|_2+\lambda_n}\|\nabla f(\param_n)\|.
    \end{align}
    Combine \eqref{eq:DBP} and \eqref{eq:DBQ} into \eqref{eq:DB} and add the quadratic integral term, and recall the definition $\overline{\lambda_n}:=\overline{\eps}+\lambda_n\geq \|\bE_n\|_2+\lambda_n$ we have
    \begin{align}\label{eq:grad_norm_contraction}
    \|\nabla f(\param_{n+1})\|&\leq \frac{1}{1-A}\frac{\overline{\lambda_n}}{\mu+\overline{\lambda_n}}\|\nabla f(\param_n)\|+ \frac{2L_H}{\mu^2}\|\nabla f(\param_n)\|^2\\
    &\leq \frac{\overline{\lambda_n}}{\mu+\overline{\lambda_n}}\|\nabla f(\param_n)\|+\frac{5}{4}A\frac{\overline{\lambda_n}}{\mu+\overline{\lambda_n}}\|\nabla f(\param_n)\|+ \frac{2L_H}{\mu^2}\|\nabla f(\param_n)\|^2\\
    &=\frac{\overline{\lambda_n}}{\mu+\overline{\lambda_n}}\|\nabla f(\param_n)\|+\frac{5}{4}\frac{\sqrt{L_H}}{\mu}\frac{\overline{\lambda_n}}{\mu+\overline{\lambda_n}}\|\nabla f(\param_n)\|^{3/2}+ \frac{2L_H}{\mu^2}\|\nabla f(\param_n)\|^2
    \end{align}
    In order for this to be a proper contraction for $A<\min\left\{0.2,\frac{20}{33}\frac{\mu}{1.2\mu+\overline{\eps}}\right\}$, rewrite
    \begin{align*}
     &\|\nabla f(\param_{n+1})\|\\
     &\leq \frac{\overline{\lambda_n}}{\mu+\overline{\lambda_n}}\|\nabla f(\param_n)\|+\frac{5}{4}\frac{\sqrt{L_H}}{\mu}\frac{\overline{\lambda_n}}{\mu+\overline{\lambda_n}}\|\nabla f(\param_n)\|^{3/2}+ \frac{2L_H}{\mu^2}\|\nabla f(\param_n)\|^2\\
     &\leq \left(\frac{\eps_n+0.2\mu}{\eps_n+1.2\mu}+\frac{5}{4}\frac{\sqrt{L_H}}{\mu}\frac{\eps_n+0.2\mu}{\eps_n+1.2\mu}\|\nabla f(\param_n)\|^{1/2}+\frac{2L_H}{\mu^2}\|\nabla f(\param_n)\|\right)\|\nabla f(\param_n)\|\\
     &\leq \left(a+\frac{5}{4}aA+2A^2\right)\|\nabla f(\param_n)\|,
    \end{align*}
    where in the last line we defined $a=\frac{\overline{\eps}+0.2\mu}{1.2\mu+\overline{\eps}}$ and used $\|\nabla f(\param_n)\|=(A\mu)^2/L_H$, then
    \begin{align*}
        a+\frac{5}{4}aA+2A^2
        &\leq (1+5A/4)a+2A/5 \quad \text{(Since $A<0.2$)}\\
        &=(5a/4+2/5)A+a\\
        &\leq \frac{33}{20}A+a \leq \frac{\mu}{1.2\mu+\overline{\eps}}+\frac{\overline{\eps}+0.2\mu}{1.2\mu+\overline{\eps}}\leq 1,
    \end{align*}
    where the last line used $A<\frac{20}{33}\frac{\mu}{1.2\mu+\overline{\eps}}$.
\end{proof}

Next we use the Theorem \ref{thm:local_unified} to prove the Corollary \ref{cor:inexact}.
\begin{proof}[\textbf{Proof of Corollary} \ref{cor:inexact}]
Notice that $N_2\geq N_1$ almost surely. So, by Thm. \ref{thm:local_unified}, on the event $n\geq N_2$, almost surely,
\begin{align*}
\|\nabla f(\param_{n+1})\|&\leq \frac{\overline{\lambda_n}}{\mu+\overline{\lambda_n}}\|\nabla f(\param_n)\|+\frac{5}{4}\frac{\sqrt{L_H}}{\mu}\frac{\overline{\lambda_n}}{\mu+\overline{\lambda_n}}\|\nabla f(\param_n)\|^{3/2}+ \frac{2L_H}{\mu^2}\|\nabla f(\param_n)\|^2.
\end{align*}
Since $\gamma_2<\gamma_1<0.2,$ $\sqrt{L_H\|\nabla f(\param_n)\|}<0.2\mu.$ So, $\overline{\lambda_n}=\eps_n+\sqrt{L_H\|\nabla f(\param_n)\|}\leq \eps_n+0.2\mu.$
Denote 
\[
 a=\frac{\eps_n+0.2\mu}{\eps_n+1.2\mu}\geq \frac{\overline{\lambda_n}}{\mu+\overline{\lambda_n}}, \quad  A=\sqrt{L_H\|\nabla f(\param_n)\|/\mu}<\gamma_2,
 \]
 similar to the end of the proof of Thm. \ref{thm:local_unified} we have almost surely,
\begin{align*}
\|\nabla f(\param_{n+1})\|&\leq \left(a+\frac{5}{4}aA+2A^2\right)\|\nabla f(\param_n)\|\\
&\leq (\frac{33}{20}A+a)\|\nabla f(\param_n)\| < (\frac{33}{20}\gamma_2+a)\|\nabla f(\param_n)\|.
\end{align*}
Since $\gamma_2=\delta\gamma_1<\delta\frac{20}{33}\frac{\mu}{1.2\mu+\overline{\eps}},$ continue the calculation we have
\begin{align*}
\|\nabla f(\param_{n+1})\| &< \frac{33}{20}\gamma_2+a 
\leq (\frac{\delta\mu}{1.2\mu+\overline{\eps}}+\frac{\eps_n+0.2\mu}{\eps_n+1.2\mu})\|\nabla f(\param_n)\|\\
&\leq (\frac{\delta\mu}{1.2\mu+{\eps_n}}+\frac{\eps_n+0.2\mu}{\eps_n+1.2\mu})\|\nabla f(\param_n)\| =\frac{(\delta+0.2)\mu+\eps_n}{1.2\mu+\eps_n}\|\nabla f(\param_n)\|.
\end{align*}
Then, on the event $n> N_2$, taking the conditional expectation and using Jensen's inequality gives 
\begin{align*}
\E[\|\nabla f(\param_{n+1})\|\ |\  n>N_2]&\leq \frac{(\delta+0.2)\mu+\overline{\eps}_{\E}}{1.2\mu+\overline{\eps}_{\E}}\E[\|\nabla f(\param_{n})\|\ |\  n>N_2]\\
&=\left(1-(1-\delta)\frac{\mu}{1.2\mu+\overline{\eps}_{\E}}\right)\E[\|\nabla f(\param_{n})\|\ |\  n>N_2].
\end{align*}
Now iterating the inequality above $m\geq 1$ times gives 
\[
\E[\|\nabla f(\param_{n+m})\|\ |\  n>N_2]=\left(1-(1-\delta)\frac{\mu}{1.2\mu+\overline{\eps}_{\E}}\right)^m\E[\|\nabla f(\param_{n})\|\ |\  n>N_2].
\]
\end{proof}

\section{Detailed implementation of RON with RPC}
\label{sec:alg_detail}
Here we give the detailed algorithmic implementation of our RON \eqref{eq:RON_high_level} using rank-$k$ RPC.
\begin{algorithm}[H]
	\small
	\caption{Regularized Overestimated Newton with RPCholesky}
	\label{algorithm:RON}
	\begin{algorithmic}[1]
		 \State \textbf{Input:} $\param_{0}\in\Param$ (initial estimate)
        \State \textbf{Parameters:}  $M$ (number of iterations); $k$ (lower rank parameter); $L_H$ (Lipschitz constant of the Hessian)
		\State \quad \textbf{for} $n=0,1,\cdots,M$ \textbf{do} 
		\State \quad\quad $\mathbf{F}_n,\eps_n\xleftarrow[]{}$ output of Algorithm \ref{algorithm:RPC_simple} with input ($\param_{n},k$) \hspace{1cm} ($\triangleright$ \textit{Random Pivoted Cholesky \cite{chen2022randomly}.})
        \State \quad\quad $\lambda_n\xleftarrow[]{}\eps_n+\sqrt{L_H\|\nabla f(\param_n)\|}$
        \State \quad\quad $p_{n} \leftarrow - \left(\mathbf{F}_n\mathbf{F}_n^T+\lambda_n\mathbf{I}\right)^{-1}  \nabla f(\param_{n})$ \hspace{3cm} ($\triangleright$ \textit{Use Woodburry indentity \cite{woodbury1950inverting}})
        \State \quad\quad $\param_{n+1}\xleftarrow[]{}\param_{n} + p_n$ \hspace{5cm} ($\triangleright$ \textit{Parameter update})
		\State \textbf{output} $\param_{M}$ 
	\end{algorithmic}
\end{algorithm}

In line 6 of Alg. \ref{algorithm:RON}, by applying Woodbury identity \cite{woodbury1950inverting}, we can use the following trick to calculate the search direction
	\begin{align*}
		(\mathbf{F_n}\mathbf{F_n}^T+\lambda_n \I_{N})^{-1}\nabla f(\param_n) =\lambda_n^{-1}\nabla f(\param_n)-\lambda_n^{-1}\mathbf{F}(\lambda_n\I_k+\mathbf{F}^T\mathbf{F})^{-1}\mathbf{F}^T\nabla f(\param_n).
	\end{align*}
	The dominating computations above are $N\times k$ matrices multiplying vectors of dimension $k$, $k\times N$ matrices multiplying vectors of dimension $N$, and inverting a $k\times k$ matrix. Therefore, the computational cost of each iteration is indeed $O(k^2N)$.

\begin{algorithm}[H]
	\small
	\caption{Random Pivoted Cholesky Factorization (RPC) \cite{chen2022randomly}}
	\label{algorithm:RPC_simple}
	\begin{algorithmic}[1]
		\State \textbf{Input:} $\A\in \R^{N\times N}$ ; $k\leq N$ (number of columns to be sampled); 
		\State \quad Set $\mathbf{F} \leftarrow O\in\R^{N\times k}$,  $\textbf{d}\leftarrow \textup{diagonal of $\A$}$ ; 
		\State \quad \textbf{for} $n=1,\cdots,k$ \textbf{do}
		\State \quad\quad pick $s\in\{1,...,N\}$ according to distribution $\mathbf{d}/\lVert \mathbf{d} \rVert_{1}$
		\State \quad\quad $\mathbf{col}\xleftarrow{} 
		\A[:,s]$
		\State \quad\quad $\mathbf{g}\xleftarrow{}\mathbf{col}-\mathbf{F}[:,1:n-1]\mathbf{F}[s,1:n-1]^T$
		\State \quad\quad \textbf{if} $\|\mathbf{g}\|_{1}=0$
		\State \quad\quad\quad \textbf{output} $\mathbf{F}, 0$ \hspace{1cm} (\textit{$\triangleright$ Exact factorization})
		\State \quad\quad \textbf{else}
		\State \quad\quad\quad $\mathbf{F}[:,n]\xleftarrow{}\mathbf{g}/\sqrt{\mathbf{g}[s]}$
		\State \quad\quad\quad $\mathbf{d}\xleftarrow{}\mathbf{d}-\mathbf{F}[:,n]*\mathbf{F}[:,n]$  \hspace{0.7cm} ($\triangleright$ \textit{Update the diagonal of the error matrix $\A-\mathbf{F}\mathbf{F}^T$})
		\State \quad $R \leftarrow \lVert \mathbf{d} \rVert_{1}$ \hspace{0.7cm} ($\triangleright$ \textit{Trace of the error matrix, an upper bound for $\|\A-\mathbf{F}\mathbf{F}^T\|_2$})
		\State \quad \textbf{output} $\mathbf{F}$, $R$
	\end{algorithmic}
\end{algorithm}
	We list the matrix $\mathbf{A}\in \R^{N\times N}$ for the ease of notation. In fact, this randomized factorization only requires the reading of $(k+1)N-k$ entries of $\mathbf{A}$ coming from $k$ columns of $\mathbf{A}$ and its diagonal. When the input $\mathbf{A}$ is PSD, the output $\mathbf{F}$ satisfies $\A-\mathbf{F}\mathbf{F}^T$ is PSD (see Thm 5.3 in \cite{zhang2006schur}).

If the function $f$ exhibits lower rank Hessian near a local minimum, and the number of pivoting columns, $k$, used in the RPC process exceeds the actual rank, then the lower rank Hessian approximation is exact. This is helpful for establishing the superlinear convergence in Theorem \ref{thm:local_unified}.  

\begin{theorem}[Error bound for RPC; Thm. 5.1 and Lem. 5.3 in  \cite{chen2022randomly}]\label{thm:RPC}
    Let $\A$ be a positive semi-definite  matrix. Fix \( r \in \mathbb{N} \) and \( \varepsilon > 0 \). The column
Nystr\"{o}m approximation \( \hat{\A}^{(k)} \) produced by \( k \) steps of RPC (Alg. \ref{algorithm:RPC_simple}) satisfies $\A-\hat{\A}^{(k)}\succeq \mathbf{O}$ a.s. and attains the bound $\mathbb{E}\left[ \operatorname{tr}(\A - \hat{\A}^{(k)}) \right] \leq (1 + \varepsilon) \cdot \operatorname{tr}(\A - \llbracket\A \rrbracket_r)$, 
provided that the number \( k \) of sampled columns satisfies
\begin{align}\label{eq:RPC_sample_complexity}
k \geq \frac{r}{\varepsilon} + \min \left\{ r \log \left(\frac{1}{\eps \eta}\right), r + r \log_{+} \left(\frac{2^r}{\varepsilon}\right) \right\}.
\end{align}
The relative error \( \eta \) is defined by \( \eta := \frac{\operatorname{tr}(\A - \llbracket\A \rrbracket_r)}{\operatorname{tr}(\A)} \). \( \log_{+}(x) := \max\{\log x, 0\} \) for \( x > 0 \), and the logarithm has base \( e \).

\end{theorem}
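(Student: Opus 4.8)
This statement is quoted verbatim from Chen, Epperly, Tropp, and Webber \cite{chen2022randomly} (their Theorem~5.1 and Lemma~5.3), so the cleanest route is simply to invoke it; below I outline the structure of their argument in case a self-contained proof is wanted. There are two pieces: the almost-sure Loewner sandwich $\mathbf{O}\preceq \hat{\A}^{(k)}\preceq \A$, and the expected-trace bound with the sample-complexity condition \eqref{eq:RPC_sample_complexity}.

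\textbf{The a.s.\ ordering.} Each step of RPC performs one step of pivoted Cholesky on the \emph{current} residual matrix, which is PSD whenever $\A$ is. A single pivoted-Cholesky step on a PSD matrix subtracts a rank-one PSD matrix and leaves behind a Schur complement that is again PSD and Loewner-dominated by the residual (the classical Schur-complement fact, cf.\ \cite{zhang2006schur}). Iterating over the $k$ steps gives $\mathbf{O}\preceq \hat{\A}^{(k)}\preceq \A$ almost surely, and in particular $\|\A-\hat{\A}^{(k)}\|_2\le \lambda_{\max}(\A-\hat{\A}^{(k)})\le \tr(\A-\hat{\A}^{(k)})$, which is exactly the computable upper bound used in \eqref{eq:RON_RPC}.

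\textbf{The expected-trace bound.} The engine is the one-step expected residual map. If $\A$ denotes the current residual, RPC selects pivot $s$ with probability $A_{ss}/\tr(\A)$ and replaces $\A$ by $\A - A_{ss}^{-1}\,\A e_s e_s^{\top}\A$; hence
\[
\E[\text{residual after one step}\mid \A] \;=\; \A - \frac{1}{\tr(\A)}\sum_{s} A_{ss}\cdot\frac{\A e_s e_s^{\top}\A}{A_{ss}} \;=\; \A - \frac{\A^2}{\tr(\A)} \;=:\; \Phi(\A).
\]
One then studies the orbit of the (random) residual process whose conditional mean is governed by $\Phi$. The structural facts exploited in \cite{chen2022randomly} are that $\Phi$ acts only on the eigenvalues of $\A$ (it commutes with the eigenbasis), is monotone in the Loewner order, and strictly decreases the trace; the spectrum is split into the top-$r$ part — which must be ``cleared'' — and the tail, whose trace equals $\tr(\A-\llbracket\A\rrbracket_r)$. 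A potential/convexity argument, combined with Jensen's inequality to pass from the random trajectory to its expectation, shows that once $k$ satisfies \eqref{eq:RPC_sample_complexity} the expected residual trace falls below $(1+\varepsilon)\tr(\A-\llbracket\A\rrbracket_r)$.

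\textbf{Main obstacle.} The delicate point — and the reason the bound is not merely $O(r/\varepsilon)$ — is accounting for the interaction between the top-$r$ block and the tail: each RPC step may ``waste'' sampling mass on tail coordinates, so one needs a quantitative estimate of how many steps are needed to deplete the top block, and the final $\min\{\,r\log(1/(\varepsilon\eta)),\ r+r\log_{+}(2^r/\varepsilon)\,\}$ expression arises from two distinct accountings (one logarithmic in the relative error $\eta$, one independent of $\eta$) that together yield a statement uniform in $\eta$. For our purposes, however, only the qualitative consequences matter — the a.s.\ underestimation property (used in the lemma after \eqref{eq:def_worst_case_overestimation_errors} to guarantee $\sigma_{\min}(\hat\B_n)\ge \eps_n$) and the exact recovery when $k\ge\operatorname{rank}(\A)$ — so in the paper it suffices to cite Theorem~\ref{thm:RPC} directly.
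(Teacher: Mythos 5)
Your proposal is correct and matches the paper's treatment: this theorem is imported verbatim from Chen, Epperly, Tropp, and Webber \cite{chen2022randomly} (their Thm.~5.1 and Lem.~5.3), and the paper gives no independent proof, so citing it directly is exactly what is done. Your sketch of the internals (the Schur-complement argument for $\mathbf{O}\preceq\hat{\A}^{(k)}\preceq\A$ and the expected one-step residual map $\A\mapsto\A-\A^{2}/\tr(\A)$ driving the trace bound) is a faithful summary of the cited argument, and you correctly note that only the a.s.\ underestimation and exact-recovery consequences are actually used downstream.
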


The following is a useful corollary of Theorem \ref{thm:RPC}, which states that RPC with rank parameter $k$ larger than the true rank of the matrix that it is approximating will almost surely return the perfect approximation.  

\begin{corollary}[Exact factorization by RPC]\label{cor:RPC_exact}
     Let $\A$ be a positive semi-definite  matrix with rank $r$ and let $\hat{\A}^{(k)}$ denote the the column
Nystr\"{o}m approximation \( \hat{\A}^{(k)} \) produced by \( k \) steps of RPC (Alg. \ref{algorithm:RPC_simple}). If $k\ge r$, then $\A=\hat{\A}^{(k)}$ almost surely. 
\end{corollary}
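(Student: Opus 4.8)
The plan is to invoke the quantitative error bound of \cref{thm:RPC} and push $\varepsilon \to 0$. First I would observe that since $\A$ has rank exactly $r$, the best rank-$r$ approximation is $\A$ itself, so $\llbracket \A \rrbracket_r = \A$ and hence $\operatorname{tr}(\A - \llbracket \A \rrbracket_r) = 0$ and the relative error $\eta = 0$. One must be slightly careful here because the sample-complexity bound \eqref{eq:RPC_sample_complexity} contains $\log(1/(\varepsilon\eta))$, which is problematic when $\eta = 0$; however the minimum in \eqref{eq:RPC_sample_complexity} also offers the alternative term $r + r\log_{+}(2^{r}/\varepsilon)$, which is finite for every $\varepsilon > 0$. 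So for any fixed $\varepsilon > 0$, the bound \eqref{eq:RPC_sample_complexity} reads $k \ge r/\varepsilon + r + r\log_{+}(2^{r}/\varepsilon)$, and I would note that the right-hand side does \emph{not} necessarily hold for our fixed $k \ge r$ once $\varepsilon$ is small. This is the point that needs the most care.

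To get around that, the key step is to exploit the almost-sure one-sided bound $\A - \hat{\A}^{(k)} \succeq \mathbf{O}$, which holds for \emph{every} run of RPC regardless of whether \eqref{eq:RPC_sample_complexity} is met, together with the monotonicity of RPC in $k$: running RPC for $k' \ge k$ steps only refines the approximation, in the sense that $\operatorname{tr}(\A - \hat{\A}^{(k')}) \le \operatorname{tr}(\A - \hat{\A}^{(k)})$ almost surely (each pivot step subtracts a PSD rank-one term from the residual). Actually, a cleaner route avoids even that: I would argue directly from Algorithm~\ref{algorithm:RPC_simple}. Since $\A$ is PSD of rank $r$, after the residual $\A - \mathbf{F}\mathbf{F}^{\top}$ has been reduced to a PSD matrix whose range is contained in an at-most-$(r - n)$-dimensional space, once $n = r$ pivots have been performed the residual is a PSD matrix of rank $0$, i.e. the zero matrix; equivalently, at some step $n \le r$ the sampled column yields $\|\mathbf{g}\|_{1} = 0$ and the algorithm outputs the exact factorization (the \texttt{if} branch in line 7). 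Thus with $k \ge r$ steps available, RPC terminates with $\A = \mathbf{F}\mathbf{F}^{\top} = \hat{\A}^{(k)}$ almost surely. The only thing to justify is that the residual rank strictly drops at each pivot with probability one: the pivot index $s$ is drawn proportionally to the current diagonal $\mathbf{d}$, so $s$ is selected only among indices where $d_s > 0$; for such $s$ the Schur-complement update removes the component of the residual's range along that column, decreasing the rank by one. Hence after at most $r$ steps the residual vanishes.

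Alternatively, if one prefers to stay at the level of the cited theorem rather than re-deriving from the algorithm, I would combine the a.s.\ inequality $\mathbf{O} \preceq \A - \hat{\A}^{(k)}$ (so that $\operatorname{tr}(\A - \hat{\A}^{(k)}) \ge 0$ pointwise) with the expectation bound applied at, say, $\varepsilon = 1$ and any larger sample count $k_0 \ge 2r + r\log_{+}(2^{r})$: this gives $\E[\operatorname{tr}(\A - \hat{\A}^{(k_0)})] \le 2\operatorname{tr}(\A - \llbracket\A\rrbracket_r) = 0$, so $\operatorname{tr}(\A - \hat{\A}^{(k_0)}) = 0$ a.s., hence $\hat{\A}^{(k_0)} = \A$ a.s.\ for $k_0$ steps; then monotonicity of the residual trace in the number of steps (or the fact that once the residual is zero RPC stops early) upgrades this to all $k$ with $r \le k \le k_0$, and for $k > k_0$ it is immediate. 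Either way the conclusion is $\A = \hat{\A}^{(k)}$ a.s.

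I expect the main obstacle to be purely bookkeeping: making sure the degenerate case $\eta = 0$ in \eqref{eq:RPC_sample_complexity} is handled cleanly, and articulating precisely why the residual's rank strictly decreases at each pivot almost surely (equivalently, why the early-termination branch in line 7 of Algorithm~\ref{algorithm:RPC_simple} is reached within $r$ steps). Everything else is a short consequence of the PSD/underestimation structure already established for RPC.
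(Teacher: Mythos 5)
Your main argument is correct, but it takes a genuinely different route from the paper. The paper stays at the level of Theorem~\ref{thm:RPC}: since $k\ge r$ implies $\llbracket \A\rrbracket_r=\A$, the relative error $\eta$ vanishes, the theorem's expectation bound gives $\E[\operatorname{tr}(\A-\hat{\A}^{(k)})]=0$, and a nonnegative random PSD matrix with zero expected trace must vanish almost surely. You instead re-derive exactness from Algorithm~\ref{algorithm:RPC_simple} itself: each pivot is drawn proportionally to the residual diagonal, so almost surely $d_s>0$, and the Schur-complement update then reduces the rank of the PSD residual by exactly one, so after at most $r$ pivots the residual is zero (or the early-termination branch fires). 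Your route is more elementary and, notably, more watertight on the degenerate bookkeeping you flag: with $\eta=0$ the sample-complexity condition \eqref{eq:RPC_sample_complexity} reads $k\ge r/\varepsilon + r + r\log_{+}(2^r/\varepsilon)$, which is strictly larger than $r$ for every finite $\varepsilon$, so the paper's reduction of the hypothesis to ``$k\ge r$'' is slightly loose at the boundary case $k=r$, whereas your rank-decrement argument covers all $k\ge r$ uniformly and makes transparent \emph{why} the zero-trace conclusion holds pathwise rather than only in expectation. What the paper's approach buys is brevity and reuse of the already-cited error bound without re-opening the algorithm. One caveat on your fallback route: the step where you ``upgrade'' exactness at a large step count $k_0$ down to $r\le k\le k_0$ via monotonicity goes the wrong way---$\operatorname{tr}(\A-\hat{\A}^{(k)})\ge\operatorname{tr}(\A-\hat{\A}^{(k_0)})=0$ yields only the trivial inequality, not $\hat{\A}^{(k)}=\A$---so that alternative does not stand on its own; rely on your direct algorithmic argument (or on the paper's expectation argument, applied for the given $k$).
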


\begin{proof}
    Since $k\ge r$, we have $\eta = \frac{\operatorname{tr}(\A - \llbracket\A \rrbracket_r)}{\operatorname{tr}(\A)} =0$. Hence by choosing $\eps$ sufficiently small, the sample complexity bound \eqref{eq:RPC_sample_complexity} becomes $k\ge r$, which is already satisfied by the hypothesis. Then by Theorem \ref{thm:RPC}, we have that $\A-\hat{\A}^{(k)}$ is PSD and $\mathbb{E}\left[ \operatorname{tr}(\A - \hat{\A}^{(k)}) \right] = 0$. Thus $\A-\hat{\A}^{(k)}$ is a random PSD matrix with zero trace norm, which must be the zero matrix almost surely. 
\end{proof}

\section{Experimental results}
\label{sec:experiments}

\subsection{Entropic Optimal Transport}

We validate the efficacy of RON for the problem of computing entropic optimal transport \cite{cuturi2013sinkhorn, pavon2021data}, which has drawn significant attention from the ML community recently. Roughly speaking, given row and column marginals $(\r,\c)$, we seek to find a matrix giving the joint distribution with the least overall transportation cost. It is standard to solve its Kantorovich dual by well-known procedures such as the Sinkhorn algorithm \cite{cuturi2013sinkhorn}. 
This reads as the following smooth convex optimization problem 
\begin{align}\label{eq:typical_Lagrangian}
	\sup_{\balpha,\bbeta} \bigg( 
	\langle \r,\balpha \rangle  + \langle \c, \bbeta \rangle 
	- \langle \W , \exp(\balpha\oplus \bbeta) \rangle
	\bigg),
\end{align}
where $\W_{ij}=e^{-c(i,j)/\eps} \r(i)\c(j)$ with $c(\cdot,\cdot)$ being the cost function, $\eps>0$ is the entropic regularization parameter, and 
\(\oplus\) denotes broadcasted summation, i.e., \((\balpha \oplus \bbeta)_{ij} = \balpha_i + \bbeta_j\). One can regard the dual potential functions $\balpha$ and $\bbeta$ as the Lagrange multipliers for the row and column margin constraints, respectively. The optimal potentials $(\balpha,\bbeta)$ yield the optimal transport map via $\bZ^{\r,\c}=\exp(\balpha\oplus \bbeta)$. 

The canonical algorithm for solving \eqref{eq:typical_Lagrangian} is the \textit{Sinkhorn algorithm} \cite{cuturi2013sinkhorn}, which alternates scaling rows and columns of an exponentiated kernel matrix to match the prescribed marginals. Sinkhorn enjoys linear convergence \cite{carlier2022linear} and is computationally efficient. However, its convergence rate can deteriorate significantly when the entropic penalty is small and the dual potentials become sharp and ill-conditioned. To overcome this limitation, recent works have explored Newton-type algorithms to solve the dual of entropic optimal transport and have demonstrated improved empirical performance over Sinkhorn \cite{brauer2017sinkhorn, tang2024safe}. 

Notice that the dual objective function in \eqref{eq:typical_Lagrangian} is invariant under adding a constant to all coordinates of $\balpha$ and subtracting the same constant to all coordinates of $\bbeta$, so the minimizers of this problem form a 1-dimensional submanifold. Hence, it is not strictly convex. However, it has been recently established by Rigollet and Stromme  \cite[Prop. 12]{rigollet2025sample} that it satisfies $\mu$-PL for some $\mu$ in any bounded set of the dual variables. 
Thus our local convergence resutls (Thm. \ref{thm:local_unified}, Cor \ref{cor:exact}, Cor \ref{cor:inexact}) applies. Moreover, the Hessian of the dual objective in \eqref{eq:typical_Lagrangian} has rank at most $a+b$, where $a$ and $b$ denote the $\ell_{0}$-norm of the row and column marginals. Thus, finding optimal transport between sparse marginals is an instance of a smooth convex problem with uniformly low-rank Hessian. 


\begin{figure}[h!]
    \centering
    \includegraphics[width=\linewidth]{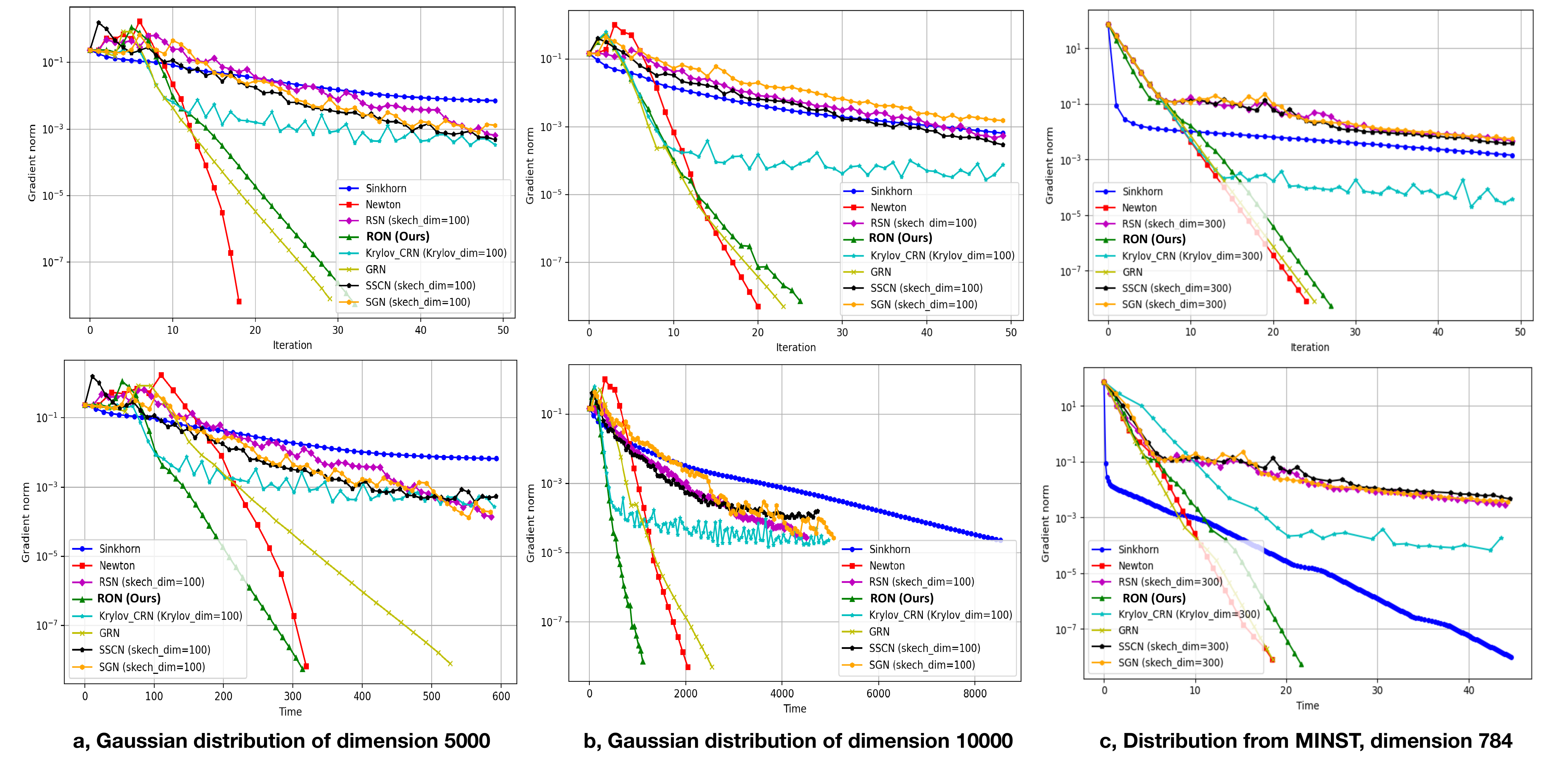}
    \vspace{-0.3cm}
    \caption{Plots of gradient norm vs iteration (top row)/time (bottom row) for solving the EOT problem by various algorithms. In column \textbf{a}, the source and target distributions used are synthesized Gaussian distributions with means $\mu=0.3,0.7$ respectively and standard deviation $\sigma=0.001$ with dimension $5000$, the cost matrix has i.i.d. uniform random variables in $[0,1].$ In column \textbf{b}, the source and target distributions and the cost matrix are similarly generated, except the dimension is $10000$. In column \textbf{c}, the source and target distributions used in the third experiment come from two normalized images in the MNIST \cite{lecun1998gradient} data set, and the cost matrix is the pixelwise $\ell_1$ distance.}
    \label{fig:experiments}
\end{figure}
We compare our proposed RON with the Hessian estimation coming from RPC with Sinkhorn, Newton, and various Newton-type algorithms. We plot the gradient norm of the dual objective function against iteration (on the top row) and time (on the bottom row) used by different algorithms. For the algorithms that reduce the Newton step to a smaller dimension $k<d$, we use the same $k$ for all such algorithms for each experiment setting. And all experiments are done on a MacBook Pro with 16GB RAM M2 chip. In columns \textbf{a},\textbf{b}, the source and target distributions are generated by first discretizing the probability density functions of two Gaussian distributions restricted on the interval $[0,1]$ and then normalizing the discretized vectors. In both experiments, the means of the two Gaussians are $0.3$ and $0.7$ with the same standard deviation $0.001$, and the same hyperparameter $k=100$ is used. The dimensions of the distributions in \textbf{a} and \textbf{b} are $5000$ and $10000$ respectively (meaning the total number of parameters to learn is $10000$ and $20000$ respectively). And the entries of the cost matrix for this set of experiments are drawn from i.i.d. Unif$[0,1].$ And in columns \textbf{c}, the source and target distributions come from two different vectorized digital images in the MNIST \cite{lecun1998gradient} data set, and the dimension of each distribution is $784$ (meaning total number of parameters to learn is $1568$), and the hypermapameter is $k=300.$ The cost matrix in this experiment comes from the pixel-wise $\ell_1$ distance. 

\subsection{Large linear inverse problems}\label{sec:experiments_app}

Here we test our algorithm on solving linear system $\A \mathbf{x}=\mathbf{b}$, where $\A\in \R^{p\times d}$ and $\mathbf{b}\in \R^{p}$ with $p\gg d$. This can be reformulated as the optimization problem below
\[
\min_{\x \in \R^{d}} \, \left[ f(\x):=\frac{1}{2}\lVert \A \x - \mathbf{b} \rVert^{2}\right].
\]
\begin{figure}[h!]
    \centering
\includegraphics[width=1\linewidth]{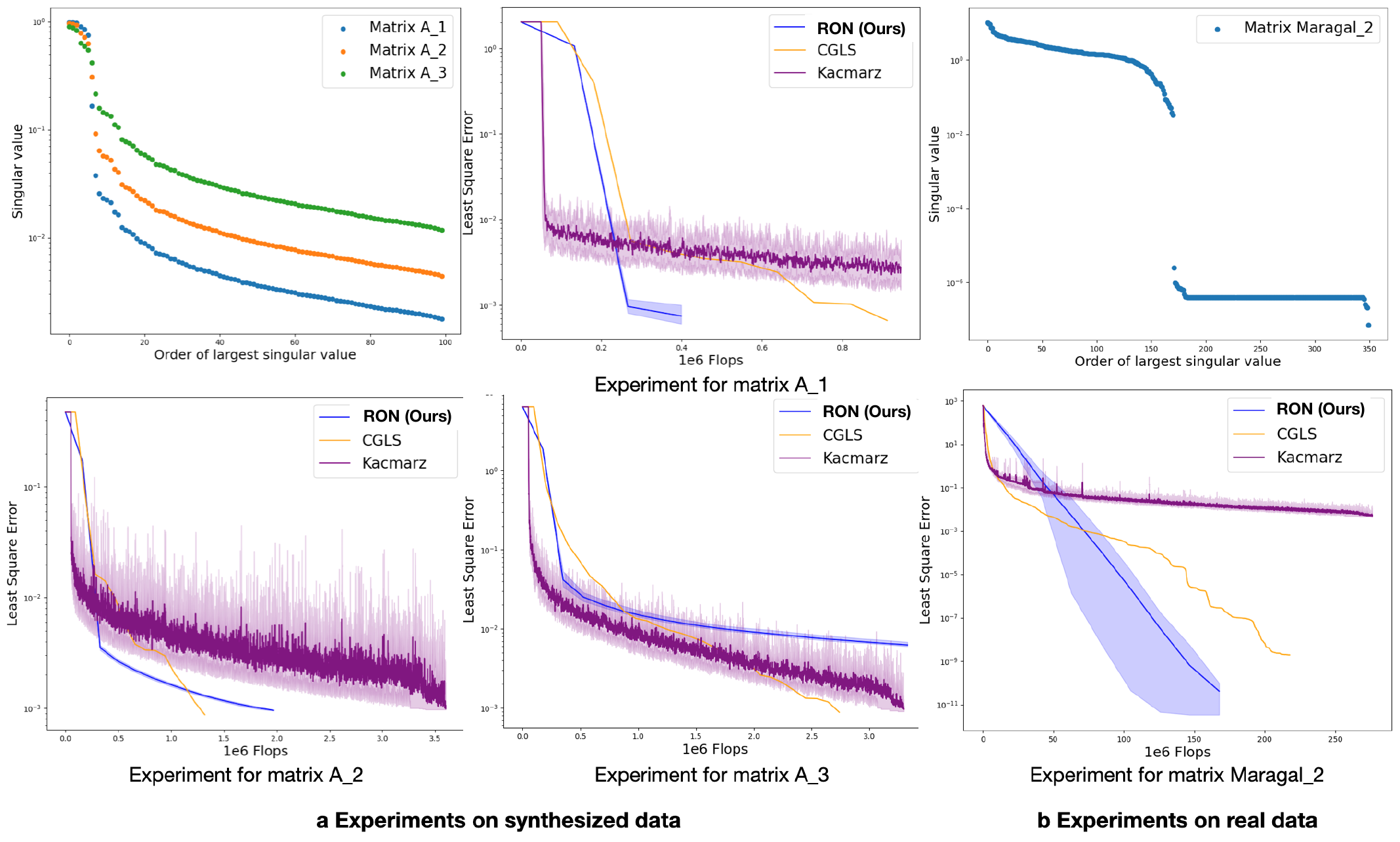} 
    \caption{Plots of singular values distributions of four matrices and the result of solving $\A\mathbf{x}=\mathbf{b}$ are plotted as least square error vs. flops used by the algorithms. Part \textbf{a}, the left-hand side, is the experiments on synthesized data. Part \textbf{b}, the right hand side, is the experiment on matrix \texttt{Maragal2} in the SuiteSparse Matrix Collection \cite{kolodziej2019suitesparse}.}
    \label{fig:experiments_linear}
\end{figure}

While this problem has a constant Hessian, it has a wide range of applications, especially in scientific computing. Also, since we can control the Hessian of the objective through the singular value profile of $\A$, it provides a nice class of problems to test our improved local linear convergence rate (Thm. \ref{thm:local_unified}, Cor.\ref{cor:inexact}). We present experimental results with both synthetic and real data. We compare our proposed RON with the Hessian estimation coming from RPC algorithm with parameter $k=20$ with two widely used linear problem solvers, randomized Kaczmarz  \cite{strohmer2009randomized} and conjugate gradient least squares (CGLS) \cite{hestenes1952methods}. We plot the least square error as a function of flops. Since the randomized Kacmarz and our RON are stochastic, we repeated the experiments 10 times and plotted the mean and shaded the region between the running maximum and minimum.

In Figure \ref{fig:experiments_linear} \textbf{a}, we present three synthetic experiments with matrices $\A_{1},\A_{2},\A_{3}\in\R^{500\times 100}$. We first sample $\A\in \R^{500\times 100}, \mathbf{b}\in \R^{500}$ from i.i.d. standard normal. Then we perform a singular value decomposition on the matrix $\A$ and get $\A=\mathbf{U}\mathbf{\Sigma}\mathbf{V}^T.$ We perturb the singular values in $\mathbf{\Sigma}$ to create  $\A_{1},\A_{2},\A_{3}$ with singular value profiles depicted in Figure \ref{fig:experiments_linear} top left. Going from $\A_{1}$ to $\A_{3}$, the singular values decay slower. Hence, with RPC being our random Hessian estimator, the expected overestimate error $\overline{\eps}_{\E}$ becomes larger by Thm. \ref{thm:RPC}. Then, according to Cor. \ref{cor:inexact}, we should expect that going from matrix $\A_1$ to $\A_3$, our algorithm would gradually lose its advantage in leveraging the low rank structure, which is validated in the following pictures of part \textbf{a}.

In Figure \ref{fig:experiments_linear} part \textbf{b} we also test our algorithm on matrix \texttt{Maragal2} 
in the SuiteSparse Matrix Collection 
\cite{kolodziej2019suitesparse}. This matrix has $555$ rows, $350$ columns, and has significant rank deficiency as seen in its singular value distribution in Figure \ref{fig:experiments_linear} top right. We used $k=171$ for RPC in this experiment. Although $k=171$ is a relatively large rank, capturing that part of the curvature structure helps to reduce the 'effective condition number' significantly, which helps our algorithm to converge very fast. It only takes 5-10 iterations for our algorithm to reach $10^{-10}$ error. We observe a superior convergence rate with our RON algorithm compared to the other benchmark methods. This highlights that our RON algorithm leverages low-rank structure in the landscape of the problem and yields faster convergence than other methods that do not leverage such geometric information. 



\section*{Acknowledgments}
Hanbaek Lyu is partially supported by NSF grant DMS-2206296.

\bibliographystyle{plain}
\bibliography{mybib}
\end{document}